\newcommand{\F}[0]{\mathbb{F}}
\newcommand{\R}[0]{\mathbb{R}}
\newcommand{\gl}[0]{\operatorname{GL}}
\newcommand{\tr}[0]{\operatorname{tr}}
\newcommand{\wt}[1]{\widetilde{#1}}
\newcommand{\frob}[0]{\operatorname{Frob}}
\newcommand{\card}[0]{\#}
\newcommand{\pmat}[4]{\begin{pmatrix}#1 & #2 \\ #3 & #4\end{pmatrix}}
\newcommand{\smat}[4]{\left(\begin{smallmatrix} #1 & #2 \\ #3 & #4\end{smallmatrix}\right)}
\newcommand{\Ov}{\mathcal{O}_{v}}
\newcommand{\Ok}{\mathcal{O}_{K}}
\newcommand{\Q}{\mathbb{Q}}
\newcommand{\Z}{\mathbb{Z}}
\newcommand{\calO}{\mathcal{O}}
\newcommand{\frakp}{\mathfrak{p}}
\newcommand{\frakq}{\mathfrak{q}}
\newcommand{\frakm}{\mathfrak{m}}
\newcommand{\nm}{\operatorname{N}_{K/\Q}}
\newcommand{\zz}{\mathbb{Z}}
\newcommand{\adelezz}{\widehat{\zz}}
\newcommand{\Quo}{\operatorname{Quo}}
\newcommand{\Gal}{\operatorname{Gal}}
\newcommand{\Mat}{\operatorname{Mat}}
\newcommand{\Zb}{\mathbb Z}
\newcommand{\cl}{\overline}
\newcommand{\mf}{\mathfrak}
\newcommand{\gk}{G_{K}}
\newcommand{\Zl}{\Zb_\ell}
\newcommand{\Qb}{\mathbb{Q}}
\newcommand{\Fl}{\mathbb{F}_\ell}
\newcommand{\Kcyc}{K^{\cyc}}
\newcommand{\calP}{\mathcal{P}}
\newcommand{\Cc}{\mathcal{C}}
\newcommand{\Dc}{\mathcal{D}}
\newcommand{\raym}{\mathcal{C}^{\frakm}_K}
\newcommand{\frakr}{\mathfrak{r}}
\DeclareMathOperator{\GL}{GL}
\DeclareMathOperator{\SL}{SL}
\DeclareMathOperator{\Frob}{Frob}
\DeclareMathOperator{\ab}{ab}
\DeclareMathOperator{\cyc}{cyc}
\DeclareMathOperator{\N}{N}
\newtheorem{thm}{Theorem}[section]
\newtheorem{lemma}[thm]{Lemma}
\newtheorem{prop}[thm]{Proposition}
\newtheorem{cor}[thm]{Corollary}
\newtheorem{ex}[thm]{Example}
\newtheorem{crit}[thm]{Criterion}
\theoremstyle{remark}
\newtheorem*{rem}{Remark}
\def\imod#1{\allowbreak\mkern5mu({\operator@font mod}\,\,#1)}
\title{Elliptic Curves with Full 2-Torsion and Maximal Adelic Galois Representations}
\author{David Corwin, Tony Feng, Zane Kun Li, Sarah Trebat-Leder}
\address{Department of Mathematics, Princeton University, Princeton, New Jersey 08544}
\email{dcorwin@math.princeton.edu}
\address{479 Quincy Mail Center, 58 Plympton Street, Cambridge, MA 02138}
\email{tfeng@college.harvard.edu}
\address{Department of Mathematics, Princeton University, Princeton, New Jersey 08544}
\email{zkli@math.princeton.edu}
\address{Department of Mathematics, Princeton University, Princeton, New Jersey 08544}
\email{strebat@math.princeton.edu}
\begin{document}

\begin{abstract}
In 1972, Serre showed that the adelic Galois representation associated to a non-CM elliptic curve over a number field has open image in $\GL_2(\adelezz)$.
In \cite{greicius}, Greicius develops necessary and sufficient criteria for determining when this representation is actually surjective and exhibits such
an example. However, verifying these criteria turns out to be difficult in practice; Greicius describes tests for them that apply only to semistable
elliptic curves over a specific class of cubic number fields. In this paper, we extend Greicius' methods in several directions.  First, we consider
the analogous problem for elliptic curves with full 2-torsion. Following Greicius, we obtain necessary and sufficient conditions for the associated
adelic representation to be maximal and also develop a battery of computationally effective tests that can be used to verify these conditions.
We are able to use our tests to construct an infinite family of curves over $\Q(\alpha)$ with maximal image, where $\alpha$ is the real root of
$x^3 + x + 1$. Next, we extend Greicius' tests to more general settings, such as non-semistable elliptic curves over arbitrary cubic number fields.
Finally, we give a general discussion concerning such problems for arbitrary torsion subgroups.
\end{abstract}

\maketitle

\section{Introduction and Statement of Results}
Let $E$ be an elliptic curve over a number field $K$. For $m$ a positive integer, let $E[m]$ denote the group of $m$-torsion points of $E$ over $\overline{K}$.
It is well known that $E[m] \simeq \Z/m\Z \times \Z / m\Z$, and since $[m]$ is defined over $K$ the Galois group $G_K := \Gal(\overline{K}/K)$
acts on $E[m]$. This can be phrased as a Galois representation
\[
\rho_{E, m}: G_K \rightarrow \GL_2(\Z/m\Z).
\]
We call this the ``mod $m$'' Galois representation.
Now fix a prime $\ell$. Taking the inverse limit over powers of $\ell$, we obtain the \emph{Tate module} of $E$,
\[
T_{\ell}(E) := \varprojlim  \Z/\ell^n \Z
\]
Correspondingly, we have the ``$\ell$-adic'' Galois representation
\[
\rho_{E, \ell^{\infty}} \colon G_K \rightarrow \GL_2(\Z_{\ell}).
\]
Taking the product of these over all primes, we finally obtain the ``adelic'' Galois representation
\[
\rho_E \colon G_K  \rightarrow \GL_2(\adelezz).
\]

In \cite{serre}, Serre famously proved that for elliptic curves \emph{without} complex multiplication, the image of $\rho_E$ is always open in $\GL_2(\adelezz)$. In particular, this implies that $\rho_{E, \ell^{\infty}}$ is surjective for all sufficiently large $\ell$. He also showed that for an elliptic curve defined over $\mathbb{Q}$, $\rho_{E}$ is \emph{never} surjective.

Naturally, this raised the question of whether $\rho_E$ could be surjective for elliptic curves defined over number fields other than $\Qb$. In \cite{greicius}, Greicius proves necessary and sufficient abstract criteria for $\rho_E(G_K)$ to be the full group $\GL_2(\adelezz)$.  He then develops numerical tests applying to semistable elliptic curves over the field $\Q(\alpha)$, where $\alpha$ is the real root of $x^3 + x + 1 = 0$, which can be effectively used to verify these criteria. Through the tests he is able to give the first explicit example of an elliptic curve over a number field with surjective adelic Galois representation.  In doing so, the main challenge is to show that the $\ell$-adic representation is surjective for all $\ell$.


Despite the difficulty of finding explicit examples of surjective adelic representations, it turns out that much is known on average. Zywina, building on work of Duke and Jones in \cite{duke} and \cite{jones}, proved that almost all (in the sense of density) elliptic curves have surjective adelic
Galois representation (\cite{zyw1, zyw2}). In fact, his work shows that for any rational family of elliptic curves,
even when there are obstructions to the surjectivity of $\rho_E$ such as the presence of torsion defined over $K$,
the generic member has maximal Galois representation. However, his methods are ineffective in that they do not
explicitly produce any such elliptic curves.

In this paper, we first consider the question of determining the image of Galois for explicit elliptic curves under certain hypotheses
of torsion. More specifically, let $E/K$ be an elliptic curve with
\emph{full} $2$-torsion over $K$, so that $E[2](K) \cong \Z/2\Z \times \Z/2\Z$.
Then the action of $G_{K}$ on $E[2]$ is trivial, so the image of the $2$-adic Galois representation
\[
\rho_{E,2^{\infty}}\colon G_{K} \rightarrow \GL_2(\mathbb{Z}_{2})
\]
lies in the kernel of the reduction mod $2$ map.
In this case the adelic Galois representation clearly cannot be surjective. Writing $V_k(\ell)= I + \ell^k \Mat_{2 \times
2}(\mathbb{Z}_{\ell})$, the remarks above imply that $\rho_E$ factors through the map
\[
\rho_{E}\colon G_{K} \rightarrow V_1(2) \times \prod_{\ell > 2} \GL_2(\mathbb{Z}_{\ell}).
\]
Following Greicius, we first prove necessary and sufficient abstract criteria, analogous to Theorem 3.1
in \cite{greicius}, for the adelic Galois representation to be maximal
given that the elliptic curve has full two-torsion over its field of definition, $K$.

\begin{thm}\label{conditions}
Let $K$ be a number field. Then $\rho_E$ surjects onto
$V_1(2)  \times \prod_{\ell > 2} \GL_2(\mathbb{Z}_{\ell})$ if and only if the following three
conditions are satisfied:
\begin{enumerate}
\item $\rho_{E, \ell^{\infty}}$ is surjective for each $\ell \geq 3$ and $\rho_{E,2^{\infty}}$ surjects onto $V_1(2)$,
\item\label{intersection} $K(E[4]) \cap K^{cyc} = K(i)$,
\item $K \cap \mathbb{Q}^{\cyc} = \mathbb{Q}$.
\end{enumerate}
\end{thm}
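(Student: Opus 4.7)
The strategy is to adapt Greicius's proof of Theorem 3.1 in \cite{greicius} to the $V_1(2)$ setting. Write $G = V_1(2) \times \prod_{\ell > 2} \GL_2(\Z_\ell)$ and $H = \rho_E(G_K) \subseteq G$. The ``only if'' direction should be routine: assuming $H = G$, condition (1) follows by projecting to each factor. For (3), $\det \circ \rho_E$ is the adelic cyclotomic character, whose image cuts out $\Gal(K^{\cyc}/K)$; surjectivity forces this to be all of $\adelezz^{\times}$, which by Galois theory is equivalent to $K \cap \Q^{\cyc} = \Q$. For (2), the Weil pairing gives $\mu_4 \subset K(E[4])$, so $K(i) \subseteq K(E[4]) \cap K^{\cyc}$ is automatic; the reverse inclusion follows from surjectivity by comparing Galois groups modulo $4$.

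For the ``if'' direction, I would apply Goursat's lemma in two stages. First, show that the projection $H_{\mathrm{odd}}$ of $H$ to $\prod_{\ell > 2} \GL_2(\Z_\ell)$ is surjective. By condition (1), $H_{\mathrm{odd}}$ surjects onto each factor, so by induction over finite subproducts together with Goursat, any obstruction to surjectivity onto the product is a common quotient of two $\GL_2(\Z_\ell)$'s: the non-abelian simple subquotients $\mathrm{PSL}_2(\F_\ell)$ have distinct orders for distinct $\ell \geq 5$ and thus cannot coincide, while common abelian quotients are detected by $\det$ and ruled out by condition (3). The case $\ell = 3$ requires extra care because $\SL_2(\F_3)$ is not perfect. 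Second, show $H$ itself equals $V_1(2) \times \prod_{\ell > 2} \GL_2(\Z_\ell)$: again by Goursat, a proper $H$ surjecting onto both factors corresponds to a nontrivial common quotient $Q$ of $V_1(2)$ and $\prod_{\ell > 2} \GL_2(\Z_\ell)$. Translated to the Galois side, the corresponding fixed field lies in $K(E[4]) \cap K^{\cyc}$, which by (2) equals $K(i)$; this piece is already accounted for by the Weil-pairing determinant compatibility, leaving $Q$ trivial and hence $H = G$.

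The main obstacle is the second Goursat step, which hinges on understanding the abelianization of $V_1(2)$. Unlike $\GL_2(\Z_\ell)$ for $\ell \geq 5$, whose abelianization is simply $\Z_\ell^{\times}$ via the determinant, the group $V_1(2)$ has a larger abelianization because the quotient $V_1(2)/V_2(2) \cong \Mat_{2 \times 2}(\F_2)$ (as an additive group) contributes beyond $\det$. One must carefully identify which quotients of $V_1(2)$ can be matched to quotients of the odd-adic factor, and translate the resulting Galois condition into exactly condition (2). The delicate interplay between the 2-adic and odd parts, mediated by the Weil pairing forcing $\mu_4 \subset K(E[4])$, is what makes this the subtlest part of the argument.
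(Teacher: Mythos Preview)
Your plan is essentially the paper's, just repackaged. Rather than a two-stage Goursat argument, the paper invokes Greicius's abstract criterion (his Lemma~2.2 and Proposition~2.5) once: a closed subgroup $H \subseteq \prod_\alpha G_\alpha$ equals the whole product provided it surjects onto each factor, surjects onto $G^{\ab}$, and the factors share no nonabelian finite simple quotient. Since $V_1(2)$ is pro-$2$, $\Quo(V_1(2)) = \varnothing$, so the last hypothesis is automatic and everything reduces to computing $G^{\ab}$ --- precisely the computation you identify as the main obstacle. The paper shows $V_1(2)' = V_2(2) \cap \SL_2(\Z_2)$ (and recalls $\GL_2(\Z_\ell)' = \SL_2(\Z_\ell)$ for \emph{all} odd $\ell$, so your worry about $\ell = 3$ is unnecessary on the abelian side), whence the abelianization map of $G$ is $\rho_{E,4} \times \det$ with image an index-$2$ subgroup of $V_1(2)/V_2(2) \times \adelezz^\times$. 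Surjectivity onto this is then translated directly into conditions (2) and (3).

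One imprecision in your stage-2 sketch deserves attention. The common quotient $Q$ is abelian (any pro-$2$ quotient of the odd factor must factor through $\det$), so on the $2$-adic side it factors through $V_1(2)^{\ab}$. But $V_1(2)^{\ab}$ is strictly larger than $V_1(2)/V_2(2)$: it also records the $2$-adic determinant. Thus the corresponding field lies in $K(E[4]) \cdot K(\mu_{2^\infty})$, not merely in $K(E[4])$. The fix is easy: from the odd side the common field sits inside $K(\mu_{\mathrm{odd}^\infty})$, and condition (3) gives $K(\mu_{2^\infty}) \cap K(\mu_{\mathrm{odd}^\infty}) = K$, so together with condition (2) one obtains $\bigl(K(E[4])\cdot K(\mu_{2^\infty})\bigr) \cap K(\mu_{\mathrm{odd}^\infty}) = K$ and hence $Q$ is trivial outright --- no appeal to a ``Weil-pairing compatibility'' between the $2$-adic and odd-adic factors is needed (indeed, there is none, since $\adelezz^\times$ splits as a direct product). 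The paper's single computation of $G^{\ab}$ sidesteps this bookkeeping.
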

\begin{rem}
Note that the third condition cannot be satisfied when $K$ is an abelian extension of $\mathbb{Q}$, so the simplest number field $K$ where this can be satisfied is a non-Galois cubic extensions.
\end{rem}
This theorem is not hard to prove; it is more difficult to test algorithmically if the first two conditions above are satisfied.
In the case where $K$ is the number field that Greicius considers, the first condition can be checked using his methods for
most $\ell$, but the case of the 2-adic representation is significantly different, as there is no $\GL_2(\Z/2\Z)$-module
structure on the image. In Section \ref{2adicsurj}, we develop new methods for handling this case. The second condition is
also unique to the two-torsion case, and requires some work to obtain a feasible computation.  Using the tests
developed by Greicius, as well as the ones mentioned above, we implemented a computer program to check maximality of adelic
representations. Simply iterating through coefficients, we found the following curve which has maximal adelic image.

\begin{ex}\label{single_example}
Let $K = \mathbb{Q}(\alpha)$, where $\alpha$ is the real root of $x^3+x+1$. Let
\[
E/K\colon y^{2} = x(x - (2\alpha^{2} + 7\alpha + 19))(x - (18\alpha^{2} + 7\alpha + 3)).
\]
Then $\rho_{E}$ surjects onto $V_1(2)  \times \prod_{\ell > 2} \GL_2(\mathbb{Z}_{\ell})$.
\end{ex}

Not only can our tests be used to check that specific elliptic curves have maximal adelic Galois representation, they can also be used to explicitly construct infinite families of such curves. This is done in Section \ref{infinite}, and we obtain the following result.

\begin{thm}\label{infinite}
Let $K = \mathbb{Q}(\alpha)$, with $\alpha$ as above. Let $M \colon= 2^{2}\cdot 3 \cdot 5 \cdot 7 \cdot 11 \cdot 13 \cdot 17 \cdot 29 \cdot 31 \cdot 47 \cdot 787 \cdot 827$ and
$$E_{b,c}/K \colon y^2 = x(x-(\alpha^2 + b\alpha + c))(x-16(\alpha^2+\alpha+1)).$$
If $b, c \in \Z$ satisfy
\begin{align*}
b &\equiv 17 \cdot 37 \cdot 257 \cdot 509^{2} \cdot 787 \cdot 827\imod{M}\\
c &\equiv 2^{4} \cdot 17 \cdot 787 \cdot 827 \cdot 4657 \cdot 15649 \imod{M},
\end{align*}
then the adelic representation $\rho_{E_{b, c}}$ surjects onto $V_1(2)  \times \prod_{\ell > 2} \GL_2(\mathbb{Z}_{\ell})$.
\end{thm}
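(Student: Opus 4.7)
The plan is to verify the three conditions of Theorem~\ref{conditions} for every pair $(b,c)$ of integers satisfying the stated congruences. Condition (3), that $K \cap \Qb^{\cyc} = \Qb$, depends only on $K$: since the discriminant of $x^3+x+1$ is $-31$, $K = \Qb(\alpha)$ is a non-Galois cubic, its only subfields are $\Qb$ and $K$ itself, and $K/\Qb$ is not abelian, so any abelian subextension of $\Qb$ contained in $K$ must equal $\Qb$. The remaining task is to show that conditions (1) and (2) each reduce to finite lists of congruences on $b,c$ at the prime-power factors of $M$, after which a single application of CRT packages them into the two residues written in the statement.

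First I would dispatch the $\ell$-adic condition for odd $\ell$. For all $\ell$ sufficiently large (in the effective sense provided by the arguments earlier in the paper, following Greicius), mod-$\ell$ surjectivity of $\rho_{E_{b,c},\ell}$ automatically lifts to $\ell$-adic surjectivity, so only the finite list $\ell \in \{3,5,7,11,13,17,29,31,47,787,827\}$ needs individual treatment. For each such $\ell$, the extended Greicius tests reduce mod-$\ell$ surjectivity to an explicit congruence on $b$ and $c$, together with Frobenius-trace checks at a handful of auxiliary primes of good reduction (polynomial in $b,c$). Reading off, for each such $\ell$, the residue classes $(b,c) \bmod \ell$ that pass every test gives the $\ell$-part of the CRT data.

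Next I would handle the prime $2$, which controls both the $2$-adic surjectivity onto $V_1(2)$ in condition (1) and the equality $K(E_{b,c}[4]) \cap K^{\cyc} = K(i)$ in condition (2). The former uses the methods of Section~\ref{2adicsurj}, whose core input is the image modulo $4$; the latter uses the test for condition (\ref{intersection}) developed earlier, which asks whether certain square roots of differences of the $2$-torsion $x$-coordinates generate only $K(i)$ inside $K^{\cyc}$. Both tests depend only on $b,c$ modulo $4$, which is why $M$ contains the factor $2^2$ and why the residues of $b$ and $c$ are pinned down modulo $4$.

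The main obstacle is not any single step but the bookkeeping: one must verify, for every prime power $\ell^{k_\ell}$ exactly dividing $M$, that the chosen residue class of $(b,c) \bmod \ell^{k_\ell}$ simultaneously passes every test at $\ell$, and then assemble the resulting local congruences via CRT into the single pair $b \equiv 17 \cdot 37 \cdot 257 \cdot 509^2 \cdot 787 \cdot 827 \imod{M}$, $c \equiv 2^4 \cdot 17 \cdot 787 \cdot 827 \cdot 4657 \cdot 15649 \imod{M}$. Since each test from the earlier sections is algorithmic and the list of relevant primes is finite, this is a bounded (if lengthy) computation; the congruences stated in the theorem are the output of carrying it out in the explicit family $E_{b,c}$.
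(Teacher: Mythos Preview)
Your outline has the right shape---reduce everything to local congruences and CRT---but it misidentifies what the primes dividing $M$ are doing, and this leads to two genuine gaps.

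First, you never establish semistability. The paper's mod-$\ell$ test (Proposition~\ref{greiciusprop}, following Greicius) applies only to \emph{semistable} curves, and the very first step in Section~\ref{infinite} is to prove that $E_{b,c}$ is semistable whenever $(b,c)$ satisfies a congruence modulo $12$. This is where the factor $2^2\cdot 3$ in $M$ actually originates; it has nothing to do with the $2$-adic or $3$-adic image directly.

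Second, you conflate the primes dividing $M$ with the primes $\ell$ requiring individual $\ell$-adic verification. They are not the same set. The primes $7,13,17,29,47,787,827$ are \emph{auxiliary residue characteristics}: they lie below primes $\mathfrak{p}$ of $K$ at which one reduces $E_{b,c}$ to run a test, so the test's outcome depends on $(b,c)\bmod(\mathfrak{p}\cap\mathbb{Z})$. Concretely, the paper uses primes above $7$, $13$ (or $23$), $31$ to force $[K(E[4]):K]=16$ via a square/non-square argument; a prime above $29$ to witness the mod-$8$ condition of Proposition~\ref{2adicprop}; a prime above $47$ for mod-$9$ surjectivity; primes above $3,5,11$ for mod-$31$ surjectivity via Proposition~\ref{serreprop}; a single prime above $3$ whose point count is a power of $2$ to rule out \emph{all} odd $\ell\neq 31$ at once; and the list $3,5,11,17,31,787,827$ to guarantee $K(E[4])\cap K^{\cyc}=K(i)$ via Proposition~\ref{intersection_thm}. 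So the only $\ell$ needing individual attention are $2,3,31$, not the eleven primes you list. Relatedly, your claim that both $2$-adic tests ``depend only on $b,c$ modulo $4$'' is false: neither the mod-$4$ maximality nor the cyclotomic-intersection condition is controlled modulo $4$, and the mod-$8$ step imposes a congruence modulo $29$.
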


As mentioned, Greicius's  tests depend upon special properties of the $E$ and $K$: they apply only to semistable elliptic curves with non-integral $j$-invariant, and also only to cubic fields $K$ with trivial narrow class group and possessing a totally positive unit $u$ such that $u-1$ is also a unit. More precisely, Greicius uses these conditions to prove that all primes $\ell$ failing condition 1 of Theorem \ref{conditions}, outside an easily computable finite set, satisfy a certain divisibility condition relating to the reduction of $E$ at primes of good reduction. In Section \ref{nonsemi}, we prove analogous results applying to all elliptic curves with non-integral $j$-invariant over arbitrary cubic number fields.

Using them, we obtain the following examples.  First, we give an example of a non-Galois cubic field $K$ and an elliptic curve $E/K$ with full $2$-torsion over $K$ which has maximal
adelic Galois image, but is not semistable.
\begin{ex}\label{nonsemi_ex}
Let $\beta$ be the unique real root of $x^{3} + 4x^{2} + 7x - 4$ and $K = \Q(\beta)$.
The adelic representation $\rho_{E}$ associated to
the elliptic curve $$E/K \colon y^{2} = x(x + (10\beta^{2} - 3))(x - (\beta + 4))$$
surjects onto $V_1(2)  \times \prod_{\ell > 2} \gl_2(\mathbb{Z}_{\ell})$.
\end{ex}
Lastly, we give an example of non-galois cubic field $K$ with nontrivial narrow class group and a semistable elliptic curve $E/K$
with surjective adelic representation.
\begin{ex}\label{nontriv_ex}
Let $\beta$ be the unique real root of $x^{3} + 8x^{2} - 3x + 1$ and $K = \Q(\beta)$. Then the adelic representation $\rho_{E}$ corresponding to
$$E/K \colon y^{2} + xy + \beta y = x^{3} - 8x^{2} - 6x - 1$$ surjects onto $\gl_{2}(\widehat{\mathbb{Z}})$.
\end{ex}


While we focus on two-torsion in the paper, our tests may be easily generalized to higher torsion. In the final section, we discuss how to compute the $\ell$-adic image in the presence of $\ell$-torsion.    As a demonstration, we compute the $7$-adic Galois representation for an elliptic curve over $\mathbb{Q}$ with a single cycle of $7$-torsion points over $\mathbb{Q}$. In this case, the maximum possible image of Galois is the pre-image of the ``half-Borel" subgroup
\[
\begin{pmatrix} 1 & * \\ 0 & * \end{pmatrix} \subset \GL_2(\mathbb{F}_7)
\]
under the projection $GL_2(\adelezz) \rightarrow \GL_2(\mathbb{F}_7)$.

\begin{ex}\label{tor_ex}
Let $$E/\Q \colon y^{2} + xy + y = x^{3} - x^{2} - 19353x + 958713,$$ which has torsion subgroup
$\Z/7\Z$ over $\Q$.  Then $E$ has maximal adelic image. 
\end{ex}

The computations in this paper were done using \textsc{Magma} and \textsc{Sage}.
Throughout this paper, we include references to the code that we have written.
We indicate this by using the \texttt{typewriter} font. These scripts along with their outputs are available at \cite{transcript}.


\subsection*{Acknowledgements}
The authors wish to thank Professors Ken Ono and David Zureick-Brown for giving
us guidance and support during our research experience at the 2012 Emory University
REU. We are also grateful to the NSF and the Asa Griggs Candler Fund for their
support. Finally, we would like to thank the anonymous referee for the valuable suggestions
on improving this paper.

\section{The Criteria}\label{criteria}
We begin with the proof of Theorem \ref{conditions}.
\subsection{Proof of Theorem \ref{conditions}}\label{maincrit}
Throughout, we will assume that our elliptic curves have full 2-torsion over $K$, i.e., the mod 2 image of the representation is trivial. As mentioned in the introduction,
\[
\rho_E(G_K) \subset  V_1(2) \times \prod_{\ell > 2} \GL_2(\mathbb{Z}_{\ell}).
\]
Let $$G := V_1(2) \times \prod_{\ell > 2} \GL_2(\mathbb{Z}_{\ell}).$$ In this section we develop a criterion for proving that the Galois representation of an elliptic
curve with full $2$-torsion surjects onto $G$.

Our approach is modeled on that in \cite{greicius}, in which the author proves a criterion for an elliptic curve to have surjective
adelic Galois representation and then uses it to compute the first known example of such
a curve. To do so, he first develops a fairly abstract criterion for a subgroup of a product of profinite groups to
be the whole group, and then interprets it in the case of a Galois representation associated
to an elliptic curve.  We summarize this result in the following lemma.

\begin{lemma}[\cite{greicius}, Lemma 2.2 and Proposition 2.5]
Let $G_{\alpha}$ be a collection of profinite groups such that for $\alpha \neq \alpha'$,
we have $\Quo(G_\alpha) \cap \Quo(G_{\alpha'}) = \varnothing$, where $\Quo(G_\alpha)$
denotes the set of isomorphism classes of finite, nonabelian simple quotients of $G_\alpha$.
If $H \subseteq G = \prod_{\alpha} G_\alpha$ is a closed subgroup that surjects onto each
$G_\alpha$ and surjects onto $G^{\ab}$, then $H=G$.
\end{lemma}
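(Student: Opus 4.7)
The plan is to reduce the statement to Goursat's lemma for two factors, first by cutting down to finitely many factors using the closed/profinite structure of $H \subseteq G$, and then by inducting on the size of the finite index set.

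I would begin by reducing to the finite case. Since $H$ is closed in the profinite group $G$, one has $H = \bigcap_N H \cdot N$ as $N$ ranges over open normal subgroups of $G$, so it suffices to check $H \cdot N = G$ for every such $N$. Any open neighborhood of the identity in $G = \prod_\alpha G_\alpha$ contains a subproduct $\prod_{\alpha \notin S} G_\alpha$ for some finite $S$, hence so does every open normal subgroup $N$. Thus it suffices to show that for every finite set $S$ of indices, the projection $\pi_S \colon H \to G_S := \prod_{\alpha \in S} G_\alpha$ is surjective.

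Next I would induct on $|S|$. The base case $|S| = 1$ is the hypothesis. For the inductive step, write $S = S' \sqcup \{\alpha_0\}$ and set $H_S := \pi_S(H)$: this closed subgroup surjects onto each of $G_{S'}$ (by induction) and $G_{\alpha_0}$ (by hypothesis), and since abelianization commutes with finite products, $H \twoheadrightarrow G^{\ab}$ implies $H_S \twoheadrightarrow G_S^{\ab} = G_{S'}^{\ab} \times G_{\alpha_0}^{\ab}$. The disjointness hypothesis on $\Quo$ persists after grouping: if $T$ is a finite nonabelian simple quotient of $G_{S'} = \prod_{\alpha \in S'} G_\alpha$ via a surjection $\phi$, each image $\phi(G_\alpha)$ is normal in $T$ hence trivial or all of $T$, so some $G_\alpha$ with $\alpha \in S'$ surjects onto $T$; consequently $\Quo(G_{S'}) = \bigcup_{\alpha \in S'} \Quo(G_\alpha)$, which is disjoint from $\Quo(G_{\alpha_0})$.

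This reduces the problem to the two-factor case, which I would handle by Goursat's lemma: a closed subgroup of $G_{S'} \times G_{\alpha_0}$ with both projections surjective is the preimage of the diagonal under a pair of continuous surjections onto a common profinite quotient $Q$. Surjectivity of $H_S$ onto $G_S^{\ab}$ forces the induced common abelian quotient $Q^{\ab}$ to be trivial, so $Q$ is perfect in the profinite sense. If $Q \neq 1$, pick any proper open normal subgroup $N \trianglelefteq Q$ and then a maximal proper normal subgroup of the resulting nontrivial finite quotient $Q/N$: this produces a finite simple quotient of $Q$, which must be nonabelian because perfectness is inherited by quotients. Such a quotient lies in $\Quo(G_{S'}) \cap \Quo(G_{\alpha_0})$, a contradiction; hence $Q = 1$ and $H_S = G_S$, closing the induction. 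The step that most requires care — and the main obstacle — is verifying that the Goursat correspondence goes through in the topological/profinite setting, so that $Q$ really is profinite and the relevant maps continuous; this is standard but must be invoked rather than imported verbatim from the abstract-group statement.
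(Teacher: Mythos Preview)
The paper does not give its own proof of this lemma; it is stated with attribution to \cite{greicius} (Lemma 2.2 and Proposition 2.5) and immediately applied. Your argument is a correct reconstruction of the standard proof and is essentially the route Greicius takes: reduce to finitely many factors by profiniteness, then induct using Goursat's lemma, with the $\Quo$-disjointness hypothesis killing the common quotient $Q$ once surjectivity onto the abelianization has forced $Q$ to be perfect.

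A few of your compressed steps are fine but deserve one line of justification if you expand this into a full write-up. First, $H_S = \pi_S(H)$ is indeed closed because $H$ is compact and $G_S$ is Hausdorff, so the profinite version of Goursat applies and the kernels $N_i$ are closed. Second, the assertion that $H_S \twoheadrightarrow G_S^{\ab}$ forces $Q^{\ab}=1$ is correct: the image of $H_S$ in $G_{S'}^{\ab}\times G_{\alpha_0}^{\ab}$ lands in the fiber product over $Q^{\ab}$ (apply $\psi_i^{\ab}$ to both coordinates), and this fiber product is proper unless $Q^{\ab}=1$. Third, your claim $\Quo(G_{S'})=\bigcup_{\alpha\in S'}\Quo(G_\alpha)$ uses that the images $\phi(G_\alpha)$ generate $T$; this holds since the $G_\alpha$ generate a dense subgroup of $G_{S'}$ and $T$ is finite, hence discrete. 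None of these are gaps, just places where a grader might ask for a sentence more.
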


Applying this to our situation, we know that $$\rho_{E}(\gk) \subseteq V_1(2) \times \prod_{\ell > 2} \GL_2(\mathbb{Z}_{\ell})$$
is a closed subgroup. The condition on $\Quo(G_\alpha)$ is easily satisfied;
we are in the same situation as Corollary 2.7 of \cite{greicius}, except now we have replaced
$\GL_2(\Zb_2)$ with $V_1(2)$. In fact, $\Quo(V_1(2)) = \varnothing$, as
$V_1(2)$ is a pro-$2$ group, so the condition is still satisfied. Therefore, in order to show that $\rho_{E}(\gk)$ is the entire group, we must show the
$\rho_{E, \ell^\infty}$ is surjective for $\ell \neq 2$, that $\rho_E(\gk)$ surjects onto
$V_1(2)$, and that $\rho_E(\gk)$ surjects onto the abelianization of
$V_1(2) \times \prod_{\ell > 2} \GL_2(\mathbb{Z}_{\ell}).$

In order to check the first two conditions, we apply the following two lemmas which are known as the Refinement Lemmas.

\begin{lemma}[\cite{lang_trotter}, p. 47]\label{langtrotter}
Let $U \subseteq V_1(\ell)$ be a closed subgroup. Then the following are true.
\begin{enumerate}
\item If $\ell=2$ and $U$ surjects onto $V_{1}(\ell)/V_{3}(\ell)$,
  then $U=V_1(\ell)$.
\item If $\ell$ is odd, and $U$ surjects onto $V_{1}(\ell)/V_{2}(\ell)$, then $U=V_{\ell}$.
\end{enumerate}
\end{lemma}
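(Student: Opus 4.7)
The plan is to prove by induction on $k$ that $U$ surjects onto $V_1(\ell)/V_k(\ell)$ for every $k \geq k_0$, where $k_0 = 2$ for odd $\ell$ and $k_0 = 3$ for $\ell = 2$. The base case $k = k_0$ is precisely the hypothesis, and because $V_1(\ell) = \varprojlim_k V_1(\ell)/V_k(\ell)$ and $U$ is closed in $V_1(\ell)$, passing to the inverse limit once the induction is completed will yield $U = V_1(\ell)$.

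For the inductive step, I would reduce to showing that the image of $U \cap V_k(\ell)$ in $V_k(\ell)/V_{k+1}(\ell)$ is everything. The key computation is the effect of the $\ell$-th power map on the filtration: if $x = I + \ell^j A$ with $A \in \Mat_{2\times 2}(\Z_\ell)$, then
\[
x^\ell = I + \ell^{j+1} A + \binom{\ell}{2}\ell^{2j} A^2 + (\text{higher order terms}),
\]
and when $\ell$ is odd with $j \geq 1$, or $\ell = 2$ with $j \geq 2$, every term after the first two lies in $V_{j+2}(\ell)$. Identifying both $V_j(\ell)/V_{j+1}(\ell)$ and $V_{j+1}(\ell)/V_{j+2}(\ell)$ with $\Mat_{2\times 2}(\F_\ell)$ via $I + \ell^j A \mapsto A \bmod \ell$, the induced $\ell$-th power map becomes the identity, hence surjective.

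Given this, the induction would close as follows: for any $y \in V_k(\ell)$, first choose $z \in V_{k-1}(\ell)$ with $z^\ell \equiv y \pmod{V_{k+1}(\ell)}$ via surjectivity of the $\ell$-th power map; then the inductive hypothesis furnishes $u \in U$ with $u \equiv z \pmod{V_k(\ell)}$, so $u^\ell \in U \cap V_k(\ell)$ satisfies $u^\ell \equiv z^\ell \equiv y \pmod{V_{k+1}(\ell)}$. Notice this step requires $k - 1 \geq 1$ in the odd case and $k - 1 \geq 2$ in the even case, which matches the two base cases of the lemma exactly.

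The main obstacle, in my view, is pinning down precisely why the $\ell = 2$ base case must sit one level deeper than in the odd case. The gap comes from the $\binom{\ell}{2}\ell^{2j} A^2$ term, which lies in $V_{j+2}(\ell)$ exactly when $2j \geq j + 2$, i.e., $j \geq 2$. For $\ell = 2$ and $j = 1$ this term contributes a nontrivial $A + A^2$ to the induced map $V_1/V_2 \to V_2/V_3$, so the clean identification above fails and one cannot bootstrap from $V_1(2)/V_2(2)$ alone; the hypothesis must therefore include one additional layer of information. Once this power-map computation and the inverse-limit reduction are in place, the remainder of the argument is routine bookkeeping.
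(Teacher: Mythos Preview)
The paper does not supply its own proof of this lemma; it is quoted from Lang--Trotter as a known result (the ``Refinement Lemma''), so there is no in-paper argument to compare against. Your sketch is the standard proof and is correct.

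One small point worth making explicit: in the inductive step you assert that $u \equiv z \pmod{V_k(\ell)}$ implies $u^\ell \equiv z^\ell \pmod{V_{k+1}(\ell)}$. Since the groups are non-abelian this is not automatic, but it does hold here because $[V_{k-1}(\ell), V_k(\ell)] \subset V_{2k-1}(\ell) \subset V_{k+1}(\ell)$ for $k \ge 2$, so $V_k/V_{k+1}$ is central in $V_{k-1}/V_{k+1}$; writing $u = zw$ with $w \in V_k(\ell)$ then gives $u^\ell \equiv z^\ell w^\ell \equiv z^\ell \pmod{V_{k+1}(\ell)}$, the last step using that the $\ell$-th power of an element of $V_k(\ell)$ lies in $V_{k+1}(\ell)$. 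With that detail added, the argument is complete. Incidentally, the paper invokes exactly this power-map computation elsewhere (in its determination of $V_1(2)'$ and in its test for $2$-adic surjectivity), so your approach is entirely in keeping with the methods the authors use.
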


\begin{lemma}[\cite{serre98}, IV-23]\label{serre}
Let $\ell \geq 5$. Suppose $H \subseteq \SL_2(\Zl)$ is a closed subgroup that surjects onto
$\SL_2(\Fl)$. Then $H=\SL_2(\Zl)$.
\end{lemma}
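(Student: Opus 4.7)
The plan is to show that $H$ contains the kernel $N := V_1(\ell) \cap \SL_2(\Zl)$ of the reduction map $\SL_2(\Zl) \to \SL_2(\Fl)$; combined with the hypothesis that $H$ surjects onto $\SL_2(\Fl)$, this gives $H = \SL_2(\Zl)$. Set $K := H \cap N$, so that $K$ is a closed normal subgroup of $H$ with $H/K \cong \SL_2(\Fl)$. By Lemma \ref{langtrotter}(2), it then suffices to show that $K$ surjects onto the quotient $N/(N \cap V_2(\ell))$.

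The next step is to identify $N/(N \cap V_2(\ell))$ with the adjoint $\SL_2(\Fl)$-module $\mathfrak{sl}_2(\Fl)$ of trace-zero matrices, via the map $I + \ell A \mapsto A \pmod{\ell}$; the condition $\det(I+\ell A)=1$ forces $\tr(A) \equiv 0 \pmod{\ell}$, and an appropriate adjustment shows the map is an isomorphism of abelian groups. Since $V_1/V_2$ is abelian (as $[V_1,V_1] \subseteq V_2$), the conjugation action of $H$ on this quotient is trivial on $K$, so factors through $H/K \cong \SL_2(\Fl)$, and one checks the resulting action is precisely the adjoint representation. For $\ell \geq 5$, the Lie algebra $\mathfrak{sl}_2(\Fl)$ is simple, hence irreducible as an $\SL_2(\Fl)$-module. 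Therefore the image of $K$ in $\mathfrak{sl}_2(\Fl)$, being an $\SL_2(\Fl)$-submodule, is either $0$ or all of $\mathfrak{sl}_2(\Fl)$.

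The hard part will be excluding the trivial possibility. Suppose for contradiction that $K$ had zero image, i.e.\ $K \subseteq V_2(\ell)$. Applying the same irreducibility argument level-by-level to the graded pieces $V_n(\ell)/V_{n+1}(\ell) \cong \mathfrak{sl}_2(\Fl)$ would force $K \subseteq V_n(\ell)$ for every $n \geq 1$, and since $H$ is closed this gives $K = \{1\}$. Then $H \to \SL_2(\Fl)$ would be an isomorphism, so $H$ would contain an element of order $\ell$ lifting any unipotent in $\SL_2(\Fl)$. However, for $\ell \geq 5$ the group $\SL_2(\Zl)$ has no element of order $\ell$: if $A^\ell = I$ with $A \neq I$, its eigenvalues would be a primitive $\ell$-th root of unity $\zeta$ and $\zeta^{-1}$, forcing $\tr(A) = \zeta + \zeta^{-1}$ to lie in $\Zl$, which fails since $[\Q_\ell(\zeta + \zeta^{-1}) : \Q_\ell] = (\ell - 1)/2 \geq 2$. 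This contradiction shows the image of $K$ in $N/(N \cap V_2(\ell))$ is all of $\mathfrak{sl}_2(\Fl)$, and Lemma \ref{langtrotter}(2) completes the argument. The precise threshold $\ell \geq 5$ enters exactly at this step, since $\SL_2(\Zl)$ does contain elements of order $\ell$ for $\ell = 2, 3$, and correspondingly the lemma is known to fail in those cases.
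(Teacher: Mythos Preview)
The paper does not supply its own proof of this lemma --- it is quoted from \cite{serre98} without argument --- so there is no in-paper proof to compare against. Your overall strategy is Serre's, and most of it is correct; in particular the trace argument showing that $\SL_2(\Zl)$ contains no element of exact order $\ell$ for $\ell \ge 5$ is clean and valid.

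The gap is in the step you phrase as ``applying the same irreducibility argument level-by-level.'' Irreducibility of $\mathfrak{sl}_2(\Fl)$ tells you only that the image of $K$ in each graded piece $U_n/U_{n+1}$ (where $U_n := V_n(\ell)\cap\SL_2(\Zl)$) is either $0$ or all of $\mathfrak{sl}_2(\Fl)$; it does not by itself force the image to be $0$. If for instance $K \subseteq U_2$ but $K$ surjects onto $U_2/U_3$, the refinement lemma gives $K = U_2$, and then $H/K \cong \SL_2(\Fl)$ sits inside $\SL_2(\Zb/\ell^2\Zb)$ rather than inside $\SL_2(\Zl)$. Your eigenvalue argument does not exclude this, since $\SL_2(\Zb/\ell^2\Zb)$ has many elements of order $\ell$. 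So from ``image zero at level $1$'' you cannot conclude $K=\{1\}$, and the contradiction you derive at the end is never reached in the intermediate cases $K = U_n$ with $n \ge 2$.

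The standard repair (which is what Serre actually does) bypasses the descent to $K=\{1\}$. As soon as $K = H \cap U_1 \subseteq U_2$, pick $h \in H$ lifting a nontrivial unipotent $u = I+N \in \SL_2(\Fl)$; then $h^\ell \in H\cap U_1 = K \subseteq U_2$. But a direct computation for any lift of $u$ gives $h^{\ell} \equiv I + \ell N \pmod{\ell^2}$: expanding $(I+N+\ell M)^\ell$ modulo $\ell^2$ and using $N^2=0$, the cross terms vanish because $\sum_{j=0}^{\ell-1} j(\ell-1-j) = \ell(\ell-1)(\ell-2)/6 \equiv 0 \pmod \ell$ for $\ell \ge 5$. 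Hence $h^\ell \notin U_2$, a contradiction. This single computation is exactly where the threshold $\ell\ge 5$ enters, and it replaces both your level-by-level descent and your eigenvalue step. (A minor aside: Lemma~\ref{langtrotter} is stated for $V_1(\ell)$, not for $U_1(\ell)$; the $\SL_2$-variant you need is true by the same Frattini argument, but strictly speaking you are invoking an analogue.)
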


\begin{cor}\label{ladiccriteria}
Suppose that $K \cap \Qb^{\cyc} = \Qb$, and suppose the following conditions are satisfied:
\begin{enumerate}
\item $\rho_{E,2^\infty}(\gk)$ surjects onto $V_1(2)/V_3(2)$.
\item $\rho_{E,9}(\gk)$ is surjective.
\item $\rho_{E,\ell}(\gk)$ is surjective for all $\ell > 3$.
\end{enumerate}
Then $\rho_{E,2^\infty}$ surjects onto $V_1(2)$, and $\rho_{E,\ell^\infty}$ is surjective
for all odd $\ell$.
\end{cor}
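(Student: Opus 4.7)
The plan is to apply the Refinement Lemmas separately in the three regimes $\ell = 2$, $\ell = 3$, and $\ell \geq 5$, with the hypothesis $K \cap \Qb^{\cyc} = \Qb$ entering only in the last case to handle the determinant.

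For $\ell = 2$, the image $U := \rho_{E, 2^\infty}(\gk)$ is a closed subgroup of $V_1(2)$ that by hypothesis (1) surjects onto $V_1(2)/V_3(2)$, so Lemma \ref{langtrotter}(1) applies directly and gives $U = V_1(2)$.

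For $\ell = 3$, set $H := \rho_{E, 3^\infty}(\gk)$. Hypothesis (2) says the mod $9$ reduction of $H$ is all of $\GL_2(\Z/9\Z)$; in particular the mod $3$ reduction is $\GL_2(\F_3)$, and the kernel of $\GL_2(\Z/9\Z) \twoheadrightarrow \GL_2(\F_3)$ is precisely $V_1(3)/V_2(3)$. Hence the closed subgroup $U := H \cap V_1(3) \subseteq V_1(3)$ surjects onto $V_1(3)/V_2(3)$, and Lemma \ref{langtrotter}(2) gives $U = V_1(3)$. Since $H \supseteq V_1(3)$ and $H$ surjects onto $\GL_2(\F_3)$, a standard lifting argument (lift any $g \in \GL_2(\Z_3)$ through reduction mod $3$ to some $h \in H$, then write $g = h \cdot (h^{-1}g)$ with $h^{-1}g \in V_1(3) \subseteq H$) yields $H = \GL_2(\Z_3)$.

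For $\ell \geq 5$, let $H := \rho_{E,\ell^\infty}(\gk)$. By hypothesis (3), $H$ surjects onto $\GL_2(\F_\ell)$, and since $\SL_2(\F_\ell)$ is perfect for $\ell \geq 5$ we have $[\GL_2(\F_\ell), \GL_2(\F_\ell)] = \SL_2(\F_\ell)$. Therefore the closed subgroup $[H,H] \subseteq \SL_2(\Z_\ell)$ surjects onto $\SL_2(\F_\ell)$, and Lemma \ref{serre} forces $[H,H] = \SL_2(\Z_\ell)$. It remains to show $\det(H) = \Z_\ell^\times$: the determinant of $\rho_{E,\ell^\infty}$ is the $\ell$-adic cyclotomic character $\chi_\ell$, whose fixed field is $K(\mu_{\ell^\infty})$, so surjectivity of $\chi_\ell|_{G_K}$ amounts to $K \cap \Qb(\mu_{\ell^\infty}) = \Qb$, which follows from the hypothesis $K \cap \Qb^{\cyc} = \Qb$ since $\Qb(\mu_{\ell^\infty}) \subseteq \Qb^{\cyc}$. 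Combining $\SL_2(\Z_\ell) \subseteq H$ with $\det(H) = \Z_\ell^\times$ gives $H = \GL_2(\Z_\ell)$.

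There is no real obstacle; the argument is a routine application of the two Refinement Lemmas. The only conceptual point worth flagging is that the global cyclotomic hypothesis is used exactly once, namely to upgrade $\SL_2(\Z_\ell) \subseteq H$ to full surjectivity for $\ell \geq 5$, while the cases $\ell = 2, 3$ are handled on the strength of conditions (1) and (2) alone.
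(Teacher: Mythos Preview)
Your proof is correct and follows essentially the same approach as the paper: both invoke Lemma~\ref{langtrotter} for $\ell=2,3$ and Lemma~\ref{serre} together with surjectivity of the determinant for $\ell\ge 5$. Your version is in fact slightly more explicit---you spell out the passage from $H\cap V_1(3)=V_1(3)$ to $H=\GL_2(\Z_3)$, and you isolate $[H,H]\subseteq\SL_2(\Z_\ell)$ as the closed subgroup to which Lemma~\ref{serre} applies---whereas the paper compresses these steps; the paper also phrases determinant surjectivity via the Weil pairing rather than the field-theoretic statement $K\cap\Qb(\mu_{\ell^\infty})=\Qb$, but these are equivalent.
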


\begin{proof}
The surjectivity for the $2$-adic and $3$-adic representations follows from the two parts of
Lemma \ref{langtrotter}, respectively.

Furthermore, if we assume that $K \cap \Qb^{\cyc} = \Qb$, then, by the non-degeneracy of the Weil pairing, the
determinant map is surjective. The hypothesis in our corollary implies that $\rho_{E,\ell}(\gk)$
contains $\SL_2(\Fl)$, so $\rho_{E,\ell^\infty}(\gk)$ contains $\SL_2(\Zl)$. As the determinant is
surjective, this implies that $\rho_{E,\ell^\infty}$ is surjective.
\end{proof}

In order to show that $\rho_{E}(\gk)$ surjects onto the abelianization, we must compute the
abelianization of $V_1(2) \times \prod_{\ell > 2} \GL_2(\mathbb{Z}_{\ell}).$ Equivalently,
we wish to compute its commutator subgroup. We note that we only need
to do this for each $\ell$ separately. For any group $G$, we denote
the commutator subgroup of $G$ by $G'$. We recall the computation for $\ell$ odd.

\begin{lemma}[\cite{lang_trotter}, p. 95 and 183]
Let $\ell$ be odd. Then $\GL_2(\Zl)'=\SL_2(\Zl).$
\end{lemma}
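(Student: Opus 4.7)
The inclusion $\GL_2(\Zl)' \subseteq \SL_2(\Zl)$ is immediate: the determinant induces a surjection $\GL_2(\Zl) \twoheadrightarrow \Zl^\times$ with kernel $\SL_2(\Zl)$ onto an abelian group, so any commutator lies in $\SL_2(\Zl)$.

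For the reverse inclusion $\SL_2(\Zl) \subseteq \GL_2(\Zl)'$, I would produce unipotents in the commutator subgroup and then argue that they generate $\SL_2(\Zl)$. Writing $u_+(x) = I + xE_{12}$, $u_-(x) = I + xE_{21}$, and $d(a) = \operatorname{diag}(a, 1)$, a direct matrix computation gives
\[
[d(a),\, u_+(x)] = u_+((a - 1)x), \qquad [d(a),\, u_-(x)] = u_-((a^{-1} - 1)x).
\]
Since $\ell$ is odd, $2 \in \Zl^\times$; taking $a = 2$ makes both $a - 1 = 1$ and $a^{-1} - 1 = -2^{-1}$ units. Letting $x$ range over $\Zl$, this places the full upper and lower unipotent subgroups $U^+, U^-$ inside $\GL_2(\Zl)'$.

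It remains to show that $U^+ \cup U^-$ (topologically) generates $\SL_2(\Zl)$; since $\GL_2(\Zl)'$ is closed, this completes the proof. Given $M = \smat{a}{b}{c}{d} \in \SL_2(\Zl)$, I would perform the usual Gaussian elimination: if the $(1,1)$-entry fails to be a unit, then $\det M = 1$ forces $c \in \Zl^\times$, and left-multiplying by $u_+(1)$ produces a unit in position $(1,1)$; then suitable left and right multiplications by elements of $U^-$ and $U^+$ clear the off-diagonal entries, reducing $M$ to $\operatorname{diag}(a, a^{-1})$. Finally, the classical identity
\[
\operatorname{diag}(a, a^{-1}) = \bigl(u_+(a)\, u_-(-a^{-1})\, u_+(a)\bigr)\cdot\bigl(u_+(-1)\, u_-(1)\, u_+(-1)\bigr)
\]
expresses the remaining diagonal factor as a product of unipotents, so every $M \in \SL_2(\Zl)$ is a product of elements of $U^+ \cup U^-$.

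\textbf{Main obstacle.} Neither step is deep, but the crux is the first: one needs some $a \in \Zl^\times$ with $a - 1 \in \Zl^\times$, and this is precisely where the hypothesis "$\ell$ odd" is used. This is also why the analogue for $\ell = 2$ requires a genuinely different analysis (working with $V_1(2)$ and computing its abelianization directly) and cannot be handled by the same commutator-with-a-diagonal trick. The Gaussian-elimination step is a standard calculation valid over any local ring.
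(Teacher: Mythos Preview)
Your argument is correct. The paper does not supply its own proof of this lemma; it simply records the result with a citation to Lang--Trotter, so there is no ``paper's proof'' to compare against beyond the reference. Your route---produce all elementary unipotents as commutators $[d(a),u_\pm(x)]$ using $a=2\in\Zl^\times$ with $a-1\in\Zl^\times$ (this is exactly where oddness of $\ell$ enters), then express every element of $\SL_2(\Zl)$ as a finite product of such unipotents via row reduction and the Weyl-element identity---is the standard elementary approach and matches in spirit what one finds in Lang--Trotter.

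One small remark: you invoke ``$\GL_2(\Zl)'$ is closed'' to pass from topological generation to containment, but in fact your Gaussian-elimination step shows that every $M\in\SL_2(\Zl)$ is a \emph{finite} product of elements of $U^+\cup U^-$, each of which you have already exhibited as an honest commutator. So the argument yields $\SL_2(\Zl)\subseteq\GL_2(\Zl)'$ algebraically, and the appeal to closedness is unnecessary. (Your row-reduction claim is fine: if $\ell\mid a$ then $ad-bc=1$ forces $\ell\nmid c$, and then the $(1,1)$-entry of $u_+(1)M$ is $a+c\equiv c\not\equiv 0\pmod\ell$.)
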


The only task then is to compute $V_1(2)'$.

\begin{lemma}
We have $V_1(2)' =   V_2(2) \cap \SL_2(\mathbb{Z}_2).$
\end{lemma}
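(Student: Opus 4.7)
The plan is to establish the two inclusions separately. For the easy inclusion $V_1(2)' \subseteq V_2(2)\cap \SL_2(\mathbb{Z}_2)$, I would combine two abelian quotients of $V_1(2)$: first, the determinant map $\det\colon V_1(2)\to 1+2\mathbb{Z}_2$ is a homomorphism into an abelian group, so every commutator has determinant $1$; second, the direct expansion $(I+2X)(I+2Y) = I + 2(X+Y) + 4XY$ shows that $g_1g_2 - g_2g_1 \in 4\,\Mat_{2\times 2}(\mathbb{Z}_2)$, so the quotient $V_1(2)/V_2(2)$ is abelian. Any commutator therefore lies in the common kernel $V_2(2)\cap \SL_2(\mathbb{Z}_2)$.

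For the reverse inclusion, I would use a successive-approximation argument along the filtration $\{V_k(2)\cap \SL_2(\mathbb{Z}_2)\}_{k\geq 2}$. The key commutator estimate is that for $g_1 = I + 2X \in V_1(2)$ and $g_2 = I + 2^{k-1}Y \in V_{k-1}(2)$ with $k\geq 2$,
$$[g_1,g_2] \equiv I + 2^k [X,Y] \pmod{V_{k+1}(2)},$$
which follows from the identity $[g_1,g_2] - I = (g_1g_2 - g_2g_1)(g_2g_1)^{-1} = 2^k[X,Y]\cdot(g_2g_1)^{-1}$ together with $(g_2g_1)^{-1}\equiv I \pmod 2$. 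A quick explicit check (e.g.\ $[E_{11},E_{12}] = E_{12}$, $[E_{11},E_{21}] = E_{21}$, $[E_{12},E_{21}] = I$ in characteristic $2$) shows that the Lie brackets $[X,Y]$ with $X,Y\in \Mat_{2\times 2}(\mathbb{F}_2)$ span $\mathfrak{sl}_2(\mathbb{F}_2)$. Combined with the commutator estimate, this means that for each $k\geq 2$, the subgroup generated by commutators in $V_1(2)$ surjects onto $(V_k(2)\cap \SL_2(\mathbb{Z}_2))/V_{k+1}(2)$.

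Given any $w \in V_2(2)\cap \SL_2(\mathbb{Z}_2)$, I would then inductively construct a sequence $w_0 = w$, $w_{k+1} = w_k c_{k+1}^{-1}$, with each $c_{k+1}$ a product of commutators from $V_1(2)$ chosen via the estimate above so that $w_{k+1} \in V_{k+3}(2)\cap \SL_2(\mathbb{Z}_2)$. Since $w_k \to I$ in the $2$-adic topology, this presents $w$ as a convergent product of commutators; because $V_1(2)'$ is closed in the profinite topology (as $V_1(2)$ is a finitely generated pro-$2$ group), we conclude $w \in V_1(2)'$. The main obstacle is really just bookkeeping: one has to treat the base step $k=2$, where both commutator arguments live in $V_1(2)$ itself, alongside the generic step $k \geq 3$ in which one argument sits deeper in the filtration; one must also be careful to verify that the relevant Lie brackets in $\Mat_{2\times 2}(\mathbb{F}_2)$ span all of $\mathfrak{sl}_2(\mathbb{F}_2)$ rather than a proper subspace, which is the essential reason the hoped-for equality holds.
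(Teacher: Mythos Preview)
Your proposal is correct and follows the same overall architecture as the paper's proof: establish the easy inclusion via the two abelian quotients $V_1(2)\to V_1(2)/V_2(2)$ and $\det$, then show the reverse inclusion by proving $V_1(2)'$ surjects onto each graded piece $U_k(2)/U_{k+1}(2)$ of the filtration and invoke closedness. The difference lies in how the surjection onto the graded pieces is obtained. The paper handles the base case $k=2$ by brute force, writing down three explicit commutators in $V_1(2)/V_3(2)$ that visibly generate $U_2(2)/U_3(2)$, and then runs the induction not with further commutators but by \emph{squaring}: if $1+2^{k-1}A$ already lies in $V_1(2)'$ then so does $(1+2^{k-1}A)^2 \equiv 1+2^{k}A \pmod{2^{k+1}}$. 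Your argument is more uniform and Lie-theoretic: the single estimate $[I+2X,\,I+2^{k-1}Y] \equiv I + 2^k[X,Y] \pmod{2^{k+1}}$ covers every $k\ge 2$ at once, and the only computation needed is the pleasant check that $[X,Y]$ for $X,Y\in\Mat_{2\times 2}(\Fb_2)$ spans $\mf{sl}_2(\Fb_2)$. Your approach generalizes more cleanly (it is the standard ``$[\mf g,\mf g]=\mf g$'' argument for congruence filtrations), while the paper's squaring trick is slightly more elementary in that it never needs to identify the graded pieces with a Lie algebra. One small notational quibble: when you write $(V_k(2)\cap\SL_2(\Zb_2))/V_{k+1}(2)$ you really mean the quotient by $V_{k+1}(2)\cap\SL_2(\Zb_2)$; this does not affect the argument.
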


\begin{proof}
As $V_1(2)/V_2(2)$ is abelian, we know the commutator is contained in $V_2(2)$. As usual, we know
that $\SL_2(\Zb_2)$ contains the commutator, so we have an inclusion above in one direction. We write
$U_k(\ell)= V_k(\ell) \cap \SL_2(\Zl)$.

First, we would like to show that modulo $8$, the image of $V_1(2)'$ contains $U_2(2)$.
In other words, we would like to compute the commutator of $V_1(2)/V_3(2)$.

We note that
$$\left[\pmat{3}{0}{2}{3}, \pmat{3}{0}{2}{1}\right] = \pmat{1}{0}{4}{1} \quad \text{and} \quad \left[\pmat{3}{0}{0}{1}, \pmat{1}{2}{2}{1}\right] = \pmat{1}{4}{4}{1}.$$
Transposing the first, we get the matrix $\smat{1}{4}{0}{1}.$
These three matrices generate $U_2(2)/U_3(2)$, so we know that this is the commutator of $V_1(2)/V_3(2)$, and this means that $V_1(2)'$ surjects onto $U_2(2)/U_3(2)$.

Suppose that $V_1(2)'$ surjects onto $U_{k-1}(2)/U_k(2)$. Note that
$U_k(\ell) = I + \ell^k \mf{sl}_2(\mathbb{Z}_{\ell})$, where $\mf{sl}_2$ is the set of $2 \times 2$ matrices with trace $0$. We know that $V_1(2)'$ contains all matrices of the form $1+2^{k-1}A$, as $A$ runs over the matrices of trace zero modulo $2$. In general, if $ k \geq 2$ or $\ell \geq 3$, we know that $(1+\ell^k A)^2 \equiv 1+\ell^{k+1}A \imod{\ell^{k+2}}$,
and this only depends on $A$ modulo $\ell$. It follows that $V_1(2)'$ also surjects onto $U_k(2)/U_{k+1}(2)$. Since we know that $V_1(2)'$ surjects onto $U_2(2)/U_3(2)$, we know that it surjects
onto $U_k(2)/U_{k+1}(2)$ for all $k \ge 2$. Thus $V_1(2)'$ surjects onto $U_2(2)/U_k(2)$, and since
$V_1(2)'$ is closed, it must be all of $U_2(2)$.
\end{proof}

So we see that the abelianization map of $G$ is the product
$\rho_{E,4} \times \det$, and its image is the index two subgroup of $V_2/V_4 \times \adelezz^* $
where $\det$ (on the first factor) and reduction mod $2$ (on the second factor) agree.

We would like to interpret surjectivity onto $G^{\ab}$ in a field-theoretic manner. As we noted,
showing that the determinant map is surjective is a matter of requiring that $K \cap \Qb^{\cyc} = \Qb$.
In order for $\rho_E(\gk) $ to surject onto $G^{\ab}$ in our case, we need the following:

\begin{prop}
The group $\rho_E(\gk)$ surjects onto $G^{\ab}$ if and only if
$K \cap \mathbb{Q}^{\cyc} = \mathbb{Q}$, $[K(E[4]):K] = 16$, and $K(E[4]) \cap
K^{\cyc} = K(i)$.
\end{prop}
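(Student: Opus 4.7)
The plan is to interpret the abelianization map $\gk \xrightarrow{\rho_E} G \twoheadrightarrow G^{\mathrm{ab}}$ as the combined representation $(\rho_{E,4}, \chi_K)$ landing in the index-two subgroup
\[
H \subseteq V_1(2)/V_2(2) \times \adelezz^{*}
\]
described in the paragraph preceding the proposition, and to translate equality with $H$ into the three field-theoretic conditions via the Galois theory of compositums.

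First, by the Weil pairing, $\det \circ \rho_E$ equals the cyclotomic character $\chi_K\colon \gk \to \adelezz^{*}$, whose image is $\Gal(K^{\cyc}/K)$ viewed as a subgroup of $\adelezz^{*}$. This image is all of $\adelezz^{*}$ exactly when $K \cap \Qb^{\cyc} = \Qb$, giving condition~(1). Similarly, the image of $\rho_{E,4}$ is $\Gal(K(E[4])/K)$, which lies in the order-$16$ group $V_1(2)/V_2(2)$ by the full $2$-torsion hypothesis; it equals all of $V_1(2)/V_2(2)$ precisely when $[K(E[4]):K] = 16$, giving condition~(2).

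The heart of the argument is the joint image. Its kernel consists of elements of $\gk$ that fix both $E[4]$ and all roots of unity, i.e., the subgroup fixing the compositum $K(E[4])\cdot K^{\cyc}$. Hence the joint image has cardinality
\[
[K(E[4])\cdot K^{\cyc} : K] = \frac{[K(E[4]):K]\,[K^{\cyc}:K]}{[K(E[4]) \cap K^{\cyc}:K]},
\]
while $|H| = \tfrac{1}{2}\cdot 16 \cdot |\adelezz^{*}|$. The Weil pairing also shows $K(i) \subseteq K(E[4])$, and trivially $K(i) \subseteq K^{\cyc}$, so $K(i) \subseteq K(E[4]) \cap K^{\cyc}$. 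Under condition~(1), $i \notin K$ and hence $[K(i):K] = 2$. The joint image therefore equals $H$ iff the three numerical equalities $[K^{\cyc}:K] = |\adelezz^{*}|$, $[K(E[4]):K] = 16$, and $[K(E[4]) \cap K^{\cyc}:K] = 2$ all hold; the last, combined with $K(i) \subseteq K(E[4]) \cap K^{\cyc}$ and $[K(i):K]=2$, is equivalent to $K(E[4]) \cap K^{\cyc} = K(i)$.

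The main obstacle is the bookkeeping: identifying $G^{\mathrm{ab}}$ with $H$ (essentially the content of the discussion just above the proposition) and verifying that the automatic containment $K(i) \subseteq K(E[4]) \cap K^{\cyc}$ corresponds precisely to the index-two constraint cutting out $H$. Once this is in place, both directions of the proposition follow from the displayed degree formula: surjectivity of the joint image onto $H$ forces each coordinate projection to be surjective (giving (1) and (2)) and then forces the intersection degree to equal $2$ (giving (3)); conversely, the three conditions together force the compositum degree to match $|H|$, hence full surjectivity onto $G^{\mathrm{ab}}$.
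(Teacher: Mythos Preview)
Your proof is correct and follows essentially the same route as the paper: both identify the abelianization map with $(\rho_{E,4},\det)$, invoke the Weil pairing for the automatic containment $K(i)\subseteq K(E[4])\cap K^{\cyc}$, and then use the Galois theory of compositums---you via the degree formula $[LM:K]=[L:K][M:K]/[L\cap M:K]$, the paper via the fiber-product description $\Gal(LM/K)\cong\Gal(L/K)\times_{\Gal(L\cap M/K)}\Gal(M/K)$---to match the image with $H$. The only cosmetic issue is writing $|\adelezz^{*}|$ for an infinite profinite group; read this as an index (or supernatural-number) computation and the argument goes through unchanged.
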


\begin{proof}
We have already remarked that $K \cap \mathbb{Q}^{\cyc} = \mathbb{Q}$ is equivalent to the surjectivity of the determinant map.

Observe that $K(E[4])$ is the fixed field of $\rho_{E,2^{\infty}}(G_K) \cap (I+4V_1(2))$. Furthermore,  $K(E[4])  \cap K^{\cyc} \supset K(i)$
by properties of the Weil pairing.
The abelianization map can be restated as the product of the quotient maps from $G_K$
to $\Gal(K(E[4])/K)$ and $\Gal(K^{\cyc}/K)$ given by restriction to the respective fields;
under this interpretation, the condition is simply that the image of an element
$\sigma \in G_K$ must agree under the restriction of both factors to $K(i)$.

We know that if $L$ and $M$ are extensions of $K$, then $\Gal(LM/K)$ is $\Gal(L/K) \times_{\Gal(L \cap M)/K} \Gal(M/K)$, and the result follows. Therefore, if $K(E[4]) \cap K^{\cyc} = K(i)$, then the Galois group of the compositum already surjects onto $G^{\ab}$.  Conversely, if the intersection is any greater, then the map cannot be surjective because its image is
constrained to agree on the Galois group of the intersection over $K$, which strictly contains $K(i)$.
\end{proof}

\subsection{Checking condition \ref{intersection}}
As the second condition of Theorem \ref{conditions} cannot be directly computed, we must develop an easier criterion to check.
Greicius does not have to do this, as his version of the second condition is that $K(\sqrt{\Delta}) \cap \Kcyc = K$, so all that must be checked is if $\sqrt{\Delta} \in \Kcyc$.  We avoid working with $K(E[4])$ directly, as that would be computationally intensive.  Instead, we develop a test in which we only see if certain elements of $K$ are squares.

We use the following notation.  Let $T$ be a set of numbers.
Define $\mathcal{P}_T$ to be the set containing all products of non-empty subsets of elements of $T$.  For example,
if $T = \{2, 3\}$, then $\mathcal{P}_T = \{2, 3, 6\}$.  We define $\Delta$ to be the discriminant of $E$.

\begin{prop}\label{intersection_thm}
Assume that $[K(E[4]): K] = 16$ and $K \cap \Q^{\cyc} = \Q$.  Let $d_1, d_2, d_3$ be the discriminants of the three quadratic factors of the 4-division polynomial.  Let $d_4 = \sqrt{\Delta}$.  Let $S'$ be the set of prime ideals of $K$ dividing $2 \Delta$ and $S$ be the primes of $\Q$ below primes in $S'$.  Let $T = \{d_1, d_2, d_3, d_4\}$.
 If $K(E[4]) \cap \Kcyc \supsetneq K(i)$,
then there exists $s \in \calP_S$, $t \in \calP_T$ such that $t/s$ is a square in $K$.
\end{prop}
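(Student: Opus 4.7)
The plan is to use Kummer theory from both sides in order to give two parallel descriptions of some ``extra'' quadratic subfield of $K(E[4]) \cap \Kcyc$. Under the hypothesis $[K(E[4]):K] = 16$ combined with the full $2$-torsion of $E$, the image of $\rho_{E,4}$ in $I + 2\Mat_{2 \times 2}(\Z/4\Z) \cong (\Z/2\Z)^{4}$ must be the entire group, so $K(E[4])/K$ is an elementary abelian $2$-extension of order $16$ with exactly $15$ quadratic subfields.  If $K(E[4]) \cap \Kcyc \supsetneq K(i)$, then this intersection is itself an elementary abelian $2$-extension of $K$ properly containing $K(i)$, and so must contain at least one quadratic subfield $L/K$ with $L \neq K(i)$.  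The strategy is then to describe $L$ simultaneously as $K(\sqrt{t})$ for some $t \in \calP_{T}$ and as $K(\sqrt{d})$ for some squarefree integer $d$ supported on $S$.

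On the Kummer side I would first show that $K(E[4]) = K(\sqrt{d_{1}}, \sqrt{d_{2}}, \sqrt{d_{3}}, \sqrt{d_{4}})$.  The containments $\sqrt{d_{i}} \in K(E[4])$ for $i = 1, 2, 3$ are immediate: the two roots of the $i$-th quadratic factor of the $4$-division polynomial are $x$-coordinates of $4$-torsion points of $E$, and hence lie in $K(E[4])$, so their difference $\pm \sqrt{d_{i}}$ lies in $K(E[4])$.  For $d_{4}$, writing $E : y^{2} = (x - e_{1})(x - e_{2})(x - e_{3})$ makes $\Delta = 16\prod_{i<j}(e_{i}-e_{j})^{2}$ a square in $K$, so $d_{4} = \sqrt{\Delta} \in K$; I would verify $\sqrt{d_{4}} = \Delta^{1/4} \in K(E[4])$ by combining $i \in K(E[4])$ (from the Weil pairing) with an explicit expression of $\Delta^{1/4}$ in terms of the $\sqrt{d_{i}}$ and $i$ modulo $(K^{\ast})^{2}$.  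Since $[K(E[4]):K] = 16$ these four square roots must be Kummer-independent, so every quadratic subfield of $K(E[4])/K$ is of the form $K(\sqrt{t})$ for a unique $t \in \calP_{T}$ modulo $(K^{\ast})^{2}$.  In particular $L = K(\sqrt{t})$ for some $t \in \calP_{T}$.

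On the cyclotomic side, the hypothesis $K \cap \Q^{\cyc} = \Q$ gives an isomorphism $\Gal(\Kcyc/K) \cong \Gal(\Q^{\cyc}/\Q)$ by restriction, so any quadratic $L \subset \Kcyc$ has the form $L = K \cdot F$ for a unique quadratic $F \subset \Q^{\cyc}$.  By Kronecker--Weber, $F = \Q(\sqrt{d})$ for some squarefree $d \in \Z$, so $L = K(\sqrt{d})$; combined with the Kummer description this yields $t/d \in (K^{\ast})^{2}$.  To constrain $d$, I would invoke the ramification input: $K(E[4])/K$ is unramified outside primes of $K$ dividing $2\Delta$ (the $\ell$-adic representation of $E$ is unramified at primes of good reduction not above $\ell$), hence so is $L/K$; this forces the rational prime divisors of $d$ to lie in $S$, giving $\lvert d \rvert \in \calP_{S}$.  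To handle the sign, I would observe that $K(i) \subset K(E[4])$ makes $-1$ congruent to some $t_{0} \in \calP_{T}$ modulo $(K^{\ast})^{2}$, so $\calP_{T}$ is closed under negation modulo squares and we can always replace $t$ by some $t' \in \calP_{T}$ with $t' \equiv -t \pmod{(K^{\ast})^{2}}$ if needed.  Taking $s = \lvert d \rvert \in \calP_{S}$ then yields $t/s \in (K^{\ast})^{2}$.

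The main obstacle I expect is the Kummer identification of $K(E[4])$ in the second paragraph --- specifically, verifying $\sqrt{d_{4}} \in K(E[4])$ and the independence of $\{d_{1}, d_{2}, d_{3}, d_{4}\}$ in $K^{\ast}/(K^{\ast})^{2}$.  This reduces to an explicit but essentially routine calculation involving the $4$-division polynomial, the duplication formula, and the Weil pairing.  A secondary subtle point is the ramification step, where one must carefully handle the edge case of rational primes $p \notin S$ ramifying in $K/\Q$ with inertia of order $2$, where ramification of $\Q(\sqrt{d})/\Q$ at $p$ could potentially be absorbed; the argument passes through cleanly in the cases of interest (such as the base field $K = \Q(\alpha)$ of Example \ref{single_example}).
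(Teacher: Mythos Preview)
Your proposal is correct and follows essentially the same route as the paper's proof: both identify $K(E[4])$ as $K(\sqrt{d_1},\sqrt{d_2},\sqrt{d_3},\sqrt{d_4})$, pick an ``extra'' quadratic subfield $L \neq K(i)$ of the intersection, use $K \cap \Q^{\cyc} = \Q$ to descend a generator of $L$ to a squarefree integer $d$, and then invoke N\'eron--Ogg--Shafarevich to restrict the prime support of $d$ to $S$.  The only cosmetic differences are that the paper cites a reference for the Kummer description of $K(E[4])$ rather than sketching the verification, and it handles the sign by choosing $M \supset K(i)$ and replacing $d$ by $-d$, whereas you replace $t$ by $-t$ via $-1 \in \calP_T \pmod{(K^\ast)^2}$; both are equivalent.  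Your explicit flagging of the ramification edge case (a prime $p \notin S$ with even ramification in $K/\Q$) is a point the paper glosses over, though in the cubic fields under consideration $\sum e_i f_i = 3$ forces some $e_i$ to be odd, so the issue does not arise.
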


\begin{proof}
From p. 81-82 of \cite{torsion}, if $E$ is in the form $y^2 = x^3 + A x + B$, then $K(E[4]) = K(x(E[4]), \sqrt[4]{\Delta}) = K(\sqrt{d_1}, \sqrt{d_2}, \sqrt{d_3}, \sqrt{d_4})$.
Our curves are not of that form, but using the change of variables from pg. 41 - 43 of \cite{silverman1}, the same result holds for all elliptic curves in Weierstrass form.

Assume that $K(E[4]) \cap \Kcyc = M' \supsetneq K(i)$. Then $\Gal(K(E[4])/K) \cong (\Z/2\Z)^4$,
so there exists $M$ with $K(i) \subset M \subset M'$ and $\Gal(M/K)
\cong (\Z/2\Z)^2$.   The extension $M/K$ has
two quadratic subextensions in addition to $K(i)$.  Let $L := K(\sqrt{d})$, where $d \in \calO_K$ is square-free, be one of them.

As $L \subset \Kcyc$, choose $n$ such that $L \subset K(\mu_n)$.   Since $K  \cap \Q^{\cyc} = \Q$,
$\Gal(K(\mu_n)/K) \cong \Gal(\Q(\mu_n)/\Q)$.  So we may consider $\Q \subsetneq L' := \Q(\mu_n)^{\Gal(K(\mu_n)/L)} \subset \Q(\mu_n).$
By the fundamental theorem of Galois theory, $L' \subset L$.  So $L \cap \Q^{\cyc} \neq \Q$.
This implies that $\sqrt{d} \in \Q^{\cyc}$, and hence $d \in \Q^{\cyc} \cap \calO_K = \Z$.  We may assume that $d$ is positive, as $i \in M$.

By the criterion of Neron-Ogg-Shafarevich, a prime ideal $\frakp \subset \calO_K$ ramifies in
$K(E[4])/K$ only if $\frakp \in S'$.  Therefore, we must have that $d \in \calP_S$.

Since $K(E[4]) = K(\sqrt{d_1}, \sqrt{d_2}, \sqrt{d_3}, \sqrt{d_4})$, all quadratic subextensions of
$K(E[4])/K$ are generated by an element of $\calP_T$. Therefore, $L = K(\sqrt{t})$ for some $t \in \calP_T$.  Now, $K(\sqrt{t}) = K(\sqrt{s})$ if and only if $t/s$ is a square of $K$, which completes the proof of our lemma.
\end{proof}

\subsection{2-adic surjectivity}\label{2adicsurj}
We now outline our algorithm for checking that the $2$-adic image is the full group $V_1(2)$. We assume
that the $2$-adic image surjects onto $V_1(2)/V_2(2)$; this is easily checked by, for instance, computing
the degree of the field extension $K(E[4])/K$.  In contrast, computing directly the degree of the extension $K(E[8])/K$ is infeasible, as we want examples where this extension has degree 256.

In what follows, let $\mf{p}$ be a prime of $K$ not dividing $2 \Delta$, where $\Delta$ is the
discriminant of $E$ (so that the $2$-adic Galois representation is unramified at these primes).
Let $\wt{E}_{\mf{p}}$ denote the reduction of $E$ at $\mf{p}$. Let $\Frob_{\mf{p}}$
denote the Frobenius element at $\mf{p}$ for the extension $\overline{K}/K$.

\begin{prop}\label{2adicprop}
Under the hypothesis above, the 2-adic Galois representation
associated to $E$ has maximal image if the following is satisfied:
there exists a prime $\mf{p} \subset \mathcal{O}_K$ such that $\#(\mathcal{O}_K/\mf{p}) \equiv 5 \imod{8}$ and $E \imod{p}$
has full four-torsion over $\mathcal{O}_K/\mf{p}$.
\end{prop}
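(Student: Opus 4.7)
My plan is to invoke Lemma~\ref{langtrotter}(1), which reduces the problem to showing that the image $H := \rho_{E,2^{\infty}}(\gk)$ surjects onto $V_1(2)/V_3(2)$. Since we are already assuming $H$ surjects onto $V_1(2)/V_2(2)$, the task is to produce enough elements of $H$ in the kernel $N := V_2(2)/V_3(2)$; under the identification $I + 4M \mapsto M \imod{2}$ this kernel is isomorphic to $\Mat_{2 \times 2}(\mathbb{F}_2)$. The strategy is to obtain an index-$2$ subgroup of $N$ from commutators of elements of $H$, and then use the Frobenius element at $\mf{p}$ to fill the remaining coset.

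For the commutator step, I would verify two routine identities modulo $8$: first, that $N$ is central in $V_1(2)/V_3(2)$, which follows from the direct computation $(I+2A)(I+4B)(I+2A)^{-1} \equiv I + 4B \imod{8}$, and second, the commutator formula
\[
[I + 2A,\, I + 2B] \equiv I + 4[A, B] \imod{8}.
\]
By centrality the right-hand side is independent of the choice of lifts, so the assumed surjection $H \twoheadrightarrow V_1(2)/V_2(2)$ lets us realize every pair $A, B \in \Mat_{2\times 2}(\mathbb{F}_2)$ by elements of $H$. A brief check with elementary matrices confirms that such commutators span $\mf{sl}_2(\mathbb{F}_2)$, which is an index-$2$ subgroup of $N$.

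For the Frobenius step, the assumption that $E \imod{\mf{p}}$ has full $4$-torsion forces $\Frob_{\mf{p}}$ to act trivially on $E[4]$, so $\rho_{E, 2^{\infty}}(\Frob_{\mf{p}}) \in V_2(2)$; write it as $I + 4M$. The determinant of this matrix equals $\#(\calO_K/\mf{p}) \equiv 5 \imod{8}$, and expanding $\det(I + 4M) \equiv 1 + 4\tr(M) \imod{8}$ forces $\tr(M) \equiv 1 \imod{2}$. Hence the image of $\rho_{E,2^{\infty}}(\Frob_{\mf{p}})$ in $N \cong \Mat_{2\times 2}(\mathbb{F}_2)$ has odd trace, and together with the trace-zero subgroup produced by commutators it generates all of $N$, completing the reduction.

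The main obstacle I anticipate is conceptual rather than computational: recognizing that the congruence $\#(\calO_K/\mf{p}) \equiv 5 \imod{8}$ is precisely what places the Frobenius image in the non-trace-zero coset of $N$, so that a single cleverly chosen prime suffices to complement the commutator subgroup. Once that observation is in hand, the remaining work is careful bookkeeping modulo $8$---the commutator identity, the determinant expansion, and the centrality of $N$---all of which are routine.
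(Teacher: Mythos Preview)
Your argument is correct and reaches the same conclusion as the paper, but the mechanism for producing the index-$2$ subgroup of $N \cong \Mat_{2\times 2}(\mathbb{F}_2)$ is different. The paper obtains it by \emph{squaring}: from the surjection onto $V_1(2)/V_2(2)$ one has elements $I + 2S + 4T$ for every $S$, and then
\[
(I + 2S + 4T)^2 \equiv I + 4(S + S^2) \pmod{8},
\]
after which a direct enumeration over all $S$ shows these squares generate the determinant-$1$ (equivalently trace-zero) subgroup of $V_2(2)/V_3(2)$. You instead use the commutator identity $[I+2A,\,I+2B] \equiv I + 4[A,B] \pmod{8}$ and observe that Lie brackets $[A,B]$ span $\mf{sl}_2(\mathbb{F}_2)$. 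Both routes land on exactly the same subgroup, and the Frobenius step is identical.

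Your approach is arguably the more conceptual one: it identifies the index-$2$ subgroup as the image of the commutator pairing, which dovetails with the paper's earlier computation that $V_1(2)' = V_2(2) \cap \SL_2(\mathbb{Z}_2)$ (whose mod-$8$ image is precisely $\mf{sl}_2(\mathbb{F}_2) \subset N$). The paper's squaring approach, by contrast, is self-contained and avoids invoking centrality or the well-definedness of the commutator on the quotient---one just lists the five matrices $I + 4(S+S^2)$ and reads off the group they generate. Either way the obstruction is the same: the only missing coset is the one with $\det \equiv 5 \pmod{8}$, and your determinant expansion $\det(I+4M) \equiv 1 + 4\tr(M) \pmod{8}$ makes the link to the hypothesis on $\#(\mathcal{O}_K/\mf{p})$ transparent.
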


\begin{proof}
By the Refinement Lemmas (\cite{lang_trotter}, p. 47), it suffices to show that the $2$-adic image surjects onto $V_1(2)/V_3(2)$. Since we know that it surjects
onto $V_1(2)/V_2(2)$, the mod $8$ image contains an element of the form $I + 2S + 4T$ for all $2 \times 2$ matrices $S \in \text{Mat}_{2 \times 2}(\mathbb{Z}/2\mathbb{Z})$. Squaring, we obtain
\[
(I+2S+4T)^2 \equiv I + 4(S + S^2) \imod{8}.
\]
Computing this for all such $S$, we find five distinct matrices:
\[
\begin{pmatrix} 1 & 0 \\ 0 & 1\end{pmatrix}, \begin{pmatrix} 1 & 4 \\ 0 & 1\end{pmatrix}, \begin{pmatrix} 1 & 0 \\ 4 & 1\end{pmatrix}, \begin{pmatrix} 5 & 4 \\ 4 & 5\end{pmatrix}, \begin{pmatrix} 5 & 0 \\ 0 & 5\end{pmatrix}.
\]
Note that this set generates a subgroup of size $8$ in $V_2(2)/V_3(2)$ consisting of those matrices with determinant congruent to $1 \imod{8}$.
Therefore, we immediately see that (under our hypotheses) the image of Galois, contains an index two subgroup of
$V_2(3)/V_2(3)$ and hence also $V_1(3)/V_3(3)$. Moreover, equality is achieved in both cases if and only if
the mod $8$ image of Galois has an element with determinant congruent to $5 \imod{8}$ in $ V_2(2)/V_3(2)$.

Since the $8$-torsion of $E$ injects into the $8$-torsion of $\wt{E}_{\mf{p}}$, the first condition implies that in some basis for $E[8]$,
\[
\rho_{E,4}(\Frob_{\mf{p}})  = I \implies \rho_{E,8}(\Frob_{\mf{p}})  \equiv I \imod{4}
\]
but has determinant congruent to $5 \imod{8}$.
\end{proof}

\subsection{Proof of Example \ref{single_example}}
In this section we apply the results of Section \ref{criteria} and give an example of an elliptic curve over $K = \Q(\alpha)$ with surjective
adelic Galois representation $\rho_{E}$.

Let
$$E/K: y^{2} = x(x - (2\alpha^{2} + 7\alpha + 19))(x - (18\alpha^{2} + 7\alpha + 3)).$$
We want to show that $\rho_{E}$ surjects onto $V_1(2)  \times \prod_{\ell > 2} \gl_2(\mathbb{Z}_{\ell})$.

It is clear from the definition of $E$ that $E$ has full $2$-torsion over $K$.
As the conductor of $E$ is squarefree, $E$ is semistable.
We now check that the three conditions of Theorem \ref{conditions}
are satisfied. Since $K = \Q(\alpha)$, it is clear that $K \cap \Q^{\cyc} = \Q$.

As an ideal the discriminant of $E$ is
\begin{align}\label{exeq1}
(\Delta) = (2)^{12}(2\alpha^{2} + 7\alpha + 19)^{2}(5\alpha^{2} + \alpha + 4)^{2}(\alpha - 3)^{2}(\alpha - 1)^{2}
\end{align}
where the ideals on the right hand side of the above equation are prime ideals which lie
above 2, 6857, 167, 31, and 3, respectively.

Using $\textsc{Magma}$, one can compute that $[K(E[4]): K] = 16$.
We now show that $K(E[4]) \cap K^{\cyc} = K(i)$. This will make use of Lemma \ref{intersection_thm}.
To apply the theorem, we make some computational preliminaries to explicitly write out $K(E[4])$.
Note that the division polynomial of $E$ will factor as the product of 3 quadratic factors $f_{1}$, $f_{2}$, and $f_{3}$.
From Section 5.5 of \cite{torsion} and the fact that $[K(E[4]) : K] = 16$, we know that
$K(E[4]) = K(\sqrt{d_{1}}, \sqrt{d_{2}}, \sqrt{d_{3}}, \sqrt[4]{\Delta})$
where $d_{i}$ is the discriminant of $f_{i}$. In this case, we have
\begin{align*}
d_{1} &= 361\alpha^{2} - 22\alpha - 83 \\
d_{2} &= -2112\alpha^{2} - 2048\alpha - 640 \\
d_{3} &= -372\alpha^{2} + 1976\alpha + 1552.
\end{align*}
We also have that
\[d_{4} := \sqrt{\Delta} = 2^{6}\alpha^{3}(2\alpha^{2} + 7\alpha + 19)(18\alpha^{2} + 7\alpha + 3)(\alpha - 1).\]
In the notation of Lemma \ref{intersection_thm}, we have $S = \{2, 3, 31, 167, 6857\}$ and $T = \{\sqrt{d_{i}}\}_{i = 1}^{4}$.
Let $$S_{1} = \{2^{r_{1}}3^{r_{2}}31^{r_{3}}167^{r_{4}}6857^{r_{5}} : r_{i} = 0 \text{ or } 1\} \backslash \{1\}$$
and $$T_{1} = \{d_{1}^{s_{1}}d_{2}^{s_{2}}d_{3}^{s_{3}}d_{4}^{s_{4}} : s_{i} = 0 \text{ or } 1\} \backslash \{1\}.$$
Lemma \ref{intersection_thm} implies that if for all possible pairs of $\beta \in S_{1}$ and $\gamma \in T_{1}$
if $\gamma/\beta$ is not a square in $K$, then $K(E[4]) \cap K^{\cyc} = K(i)$. Writing a script in \textsc{Magma}
iterating through all $465$ pairs of $(\beta, \gamma)$ yields that this condition is indeed satisfied
(see \texttt{ex12\_sq.txt} and the corresponding \texttt{ex12\_sq\_res.txt} in \cite{transcript}). Therefore we have
$K(E[4]) \cap K^{\cyc} = K(i)$.

Let $H := \rho_{E}(G_{K})$. Denote by $H_{\ell}$ the image of $H$ under $\pi_{\ell} : H \rightarrow \gl_{2}(\Z_{\ell})$,
and similarly denote by $H(\ell)$ the image of $H$ under $r_{\ell} : H \rightarrow \gl_{2}(\Z/\ell\Z)$.
Since $[K(E[4]) : K] = 16$, the mod 4 image $H(4)$ is maximal.
This and the fact that $K(E[4]) \cap K^{\cyc} = K(i)$ imply that the abelianization map is surjective. We will also need this fact in proving that
$\rho_{E, 2^{\infty}}$ is surjective.

We now verify the first condition in Theorem \ref{conditions}. That is, $H_{\ell} = \gl_{2}(\Z_{\ell})$ for $\ell \geq 3$
and $H_{2} = V_{1}(2)$. The ideal generated by the $j$-invariant of
$E$ is
$$(j_{E}) = \frac{(\alpha^{2} - 6\alpha + 3)^{3}(3\alpha^{2} - 5\alpha + 1)^{3}(5\alpha^{2} + 2\alpha + 6)^{3}}{(5\alpha^{2} + \alpha + 4)^{2}(\alpha-1)^{2}(\alpha - 3)^{2}(2\alpha^{2} + 7\alpha + 19)^{2}}.$$
Equation \eqref{exeq1} gives the primes of bad reduction.
For all primes of bad reduction which are not the ideal $(2)$, we have $v(j_{E}) = -2$. To show that $H_{\ell} = \gl_{2}(\Z_{\ell})$
for all $\ell \neq 2, 3, 31$, we will make use of a combination of Proposition 3.5, Corollary 3.8, and
Remark 3.9 in \cite{greicius} which we state below:
\begin{prop}\label{greiciusprop}
Let $K$ be a number field with a real embedding and a trivial narrow class group and satisfying $K \cap \Q^{\cyc} = \Q$.
Let $E/K$ be a semistable elliptic curve with $j$-invariant $j_{E}$. Suppose $\ell$ is a prime unramified in $K$. If
$\ell = 2, 3, 5$, suppose further that $\ell \nmid v(j_{E})$
for some $v$ corresponding to a prime of bad reduction of $E$. If $H(\ell) \neq \gl_{2}(\F_{\ell})$, given any place $v'$ not of bad reduction,
we have $\ell \mid \#\wt{E}_{v'}(k_{v'})$ where $\wt{E}_{v'}(k_{v'})$ is the reduction of $E$ over the residue field corresponding to $v'$.
\end{prop}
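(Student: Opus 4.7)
Since this statement is assembled from three results of \cite{greicius}, the plan is to explain how Proposition 3.5, Corollary 3.8, and Remark 3.9 of \cite{greicius} combine to yield the claim. The strategy proceeds in three steps.

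First, I would apply Dickson's classification of subgroups of $\GL_2(\F_\ell)$: if $H(\ell)$ is proper in $\GL_2(\F_\ell)$, then up to conjugation it lies in a Borel subgroup, in the normalizer of a split or non-split Cartan, or its projective image is isomorphic to $A_4$, $S_4$, or $A_5$. The semistability of $E$ forces inertia at a place $v$ of multiplicative reduction to act on $E[\ell]$ via a unipotent element that is nontrivial precisely when $\ell \nmid v(j_E)$; for $\ell \geq 7$ this is automatic from the theory of the Tate parametrization, while for $\ell \in \{2,3,5\}$ it is imposed by hypothesis. A nontrivial unipotent element cannot lie in a Cartan normalizer or in any of the exceptional subgroups, so $H(\ell)$ must be contained in a Borel. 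This is the content of Proposition 3.5 of \cite{greicius}.

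Second, a Borel inclusion gives a $G_K$-stable line $L \subset E[\ell]$ with associated character $\chi \colon G_K \to \F_\ell^\times$, and Corollary 3.8 of \cite{greicius} asserts that $\chi$ is trivial. This is the main obstacle in the proof, and where essentially all of the hypotheses on $K$ are used. The character is unramified outside $\ell$ and the primes of bad reduction, and semistability together with $\ell \nmid v(j_E)$ trivializes $\chi$ on the relevant inertia subgroups at the multiplicative places. The assumption that $\ell$ is unramified in $K$ together with $K \cap \Q^{\cyc} = \Q$ controls ramification above $\ell$ by comparison with the cyclotomic character. Class field theory then identifies $\chi$ with a character of a narrow ray class group of controlled conductor; triviality of the narrow class group, together with the real embedding controlling the archimedean places, makes this group small enough to force $\chi = 1$.

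Third, with $\chi$ trivial we have $L \subset E[\ell](K)$, so $E$ has a $K$-rational point of order $\ell$. For any place $v'$ of good reduction with residue characteristic distinct from $\ell$, reduction is injective on $\ell$-torsion, so the image of $L$ is a subgroup of $\wt{E}_{v'}(k_{v'})$ of order $\ell$, yielding $\ell \mid \#\wt{E}_{v'}(k_{v'})$. This is Remark 3.9 of \cite{greicius}, and completes the combination of the three cited results into the proposition as stated.
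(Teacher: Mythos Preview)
Your sketch follows the right overall arc, but there are two genuine gaps.

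In your first step, the reason you give for dropping the hypothesis $\ell \nmid v(j_E)$ when $\ell \geq 7$ is wrong. The Tate parametrization produces a unipotent element in inertia that is nontrivial \emph{if and only if} $\ell \nmid v(j_E)$; this is never automatic, regardless of the size of $\ell$. The reason Greicius can omit the hypothesis for $\ell \geq 7$ comes from a different place: for a semistable curve with $\ell$ unramified in $K$, the image of inertia at a prime above $\ell$ is either a half-Borel (ordinary or multiplicative reduction) or a full non-split Cartan (supersingular reduction), and for $\ell \geq 7$ these already force the image, if proper, into a Borel by ruling out the Cartan-normalizer and exceptional possibilities in Dickson's list. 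So it is inertia at $\ell$, not at the bad primes, that carries the argument for large $\ell$; Remark~3.9 of \cite{greicius} is precisely the patch needed at $\ell \in \{2,3,5\}$ where this inertial information is too coarse.

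In your second and third steps you track only the character $\chi$ on the stable line $L$. What Greicius actually shows is that \emph{one} of the two isogeny characters $\chi_1, \chi_2$ (acting on $L$ and on $E[\ell]/L$, respectively) is everywhere unramified and hence trivial under the narrow-class-group hypothesis; there is no reason it must be $\chi_1$. If it is $\chi_2$ that is trivial, you obtain a $K$-rational $\ell$-torsion point not on $E$ but on the isogenous curve $E' = E/L$, and the divisibility $\ell \mid \#\wt{E}_{v'}(k_{v'})$ then follows because isogenous curves over a finite field have the same point count. Your third step also quietly restricts to $v'$ of residue characteristic prime to $\ell$, whereas the proposition asserts the divisibility at \emph{every} place of good reduction; the places $v' \mid \ell$ require a separate short argument via the reduction type at $\ell$ (compare the discussion around Corollary~\ref{cardtest_cor} in the present paper).
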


Fix an $\ell \neq 2, 31$. Assume that $H(\ell) \neq \gl_{2}(\F_{\ell})$. Let $v$ be the prime ideal
$(1 - 2\alpha)$ which lies over 13. Then $\card\wt{E}_{v}(k_{v}) = 16$ and hence by Proposition \ref{greiciusprop}, $\ell \mid 16$.
Since $\ell$ is prime, we must have $\ell = 2$, a contradiction. Therefore $H(\ell) = \gl_{2}(\F_{\ell})$ for all $\ell \neq 2, 31$.

We now use the following result which is a corollary of Lemma \ref{serre}.
\begin{prop}[\cite{greicius}, Corollary $2.13$ (iii)]\label{refinecor}
Let $K \subseteq \gl_{2}(\Z_{\ell})$ be a closed subgroup. If $\ell \geq 5$,
$K \twoheadrightarrow \gl_{2}(\F_{\ell})$ and $\det(K) = \Z_{\ell}^{\ast}$,
then $K = \gl_{2}(\Z_{\ell})$.
\end{prop}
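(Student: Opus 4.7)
The plan is to reduce this to Serre's Lemma (Lemma \ref{serre}) by passing to the commutator subgroup, which automatically lands in $\SL_2(\Z_\ell)$, and then exploit the perfectness of $\SL_2(\F_\ell)$ for $\ell \geq 5$.

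First, I would let $K'$ denote the (topological) commutator subgroup of $K$. Since the determinant of any commutator is $1$, we have $K' \subseteq \SL_2(\Z_\ell)$ automatically, and $K'$ is a closed subgroup. Next, I would compute the mod $\ell$ image of $K'$. Because reduction mod $\ell$ is a continuous surjective homomorphism $K \twoheadrightarrow \GL_2(\F_\ell)$, it sends $K'$ onto the commutator subgroup of $\GL_2(\F_\ell)$. The key group-theoretic input is that for $\ell \geq 5$ the group $\SL_2(\F_\ell)$ is perfect, so $[\GL_2(\F_\ell),\GL_2(\F_\ell)] = \SL_2(\F_\ell)$ (one inclusion from the determinant, the other from perfectness). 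Thus $K'$ is a closed subgroup of $\SL_2(\Z_\ell)$ that surjects onto $\SL_2(\F_\ell)$.

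Now I would apply Lemma \ref{serre} directly to conclude $K' = \SL_2(\Z_\ell)$. In particular, $\SL_2(\Z_\ell) \subseteq K$. Combined with the hypothesis $\det(K) = \Z_\ell^\ast$, an arbitrary $g \in \GL_2(\Z_\ell)$ can be written as $g = h \cdot s$ where $h \in K$ satisfies $\det(h) = \det(g)$ and $s = h^{-1} g \in \SL_2(\Z_\ell) \subseteq K$; hence $g \in K$, and $K = \GL_2(\Z_\ell)$.

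The main obstacle is really just the verification that the commutator subgroup surjects modulo $\ell$, which is an elementary finite group theory fact rather than anything analytic; once that is in hand, Lemma \ref{serre} does all the work of lifting from $\SL_2(\F_\ell)$ to $\SL_2(\Z_\ell)$. The hypothesis $\ell \geq 5$ enters precisely in invoking perfectness of $\SL_2(\F_\ell)$ and the hypothesis of Serre's Lemma, so both uses of this bound are essential.
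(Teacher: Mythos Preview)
Your argument is correct. The paper does not supply its own proof of this proposition; it simply cites the result from \cite{greicius} and notes that it is a corollary of Lemma~\ref{serre}, and your route via the commutator subgroup (using perfectness of $\SL_2(\F_\ell)$ for $\ell \geq 5$ to get surjection mod~$\ell$, then invoking Lemma~\ref{serre}) is exactly the standard way to make that deduction.
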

For $\ell \neq 2, 3, 31$, the previous proposition and the surjectivity of $\det|_{H}$ imply that $H_{\ell} = \gl_{2}(\Z_{\ell})$
for all $\ell \neq 2, 3, 31$. It now remains to show the last 3 cases.

Consider the $\ell = 31$ case. This case will make use of the following proposition.
\begin{prop}[\cite{serre}, Proposition 19]\label{serreprop}
Let $\ell \geq 5$ and suppose $K \subseteq \gl_{2}(\F_{\ell})$ is a subgroup satisfying
\begin{enumerate}[(i)]
\item $K$ contains elements $s_{1}$ and $s_{2}$ such that
$\left(\frac{\tr(s_{i})^{2} - 4\det(s_{i})}{\ell}\right) = (-1)^{i}$ and $\tr(s_{i}) \neq 0$.
\item $K$ contains an element $t$ such that $u = \tr(t)^{2}/\det(t) \not\equiv 0, 1, 2, 4\imod{\ell}$
and $u^{2} - 3u + 1 \not\equiv 0 \imod{\ell}$.
\end{enumerate}
Then $K$ contains $\operatorname{SL}_{2}(\F_{\ell})$. In particular, if $\det : K \rightarrow \F_{\ell}^{\ast}$
is surjective, then $K = \gl_{2}(\F_{\ell})$.
\end{prop}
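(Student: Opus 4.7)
My plan is to apply Dickson's classification of subgroups of $\operatorname{PGL}_2(\F_\ell)$ to the image $\bar K$ of $K$ under the quotient $\GL_2(\F_\ell)\to\operatorname{PGL}_2(\F_\ell)$, use hypotheses (i) and (ii) to rule out every possibility except $\bar K\supseteq\operatorname{PSL}_2(\F_\ell)$, and then lift from $\operatorname{PSL}_2$ back to $\SL_2$. Dickson's theorem says $\bar K$ either (a) lies in a Borel subgroup, (b) lies in the normalizer of a split or nonsplit Cartan, (c) is isomorphic to one of $A_4, S_4, A_5$, or (d) contains $\operatorname{PSL}_2(\F_\ell)$.

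To rule out (a) and (b) I would use (i). An element of $\GL_2(\F_\ell)$ has both eigenvalues in $\F_\ell$ exactly when its discriminant $\tr^2-4\det$ is a square, so $s_1$ has eigenvalues only in $\F_{\ell^2}\setminus\F_\ell$ while $s_2$ has two distinct eigenvalues in $\F_\ell$. Consequently $s_1$ cannot lie in a Borel or in a split Cartan, and $s_2$ cannot lie in a nonsplit Cartan. Moreover, the non-Cartan coset of either Cartan normalizer consists of trace-zero elements, so any element of nonzero trace in a Cartan normalizer already lies in the Cartan itself. Combining these observations with $\tr(s_i)\neq 0$, the group $\bar K$ cannot lie in a Borel or in the normalizer of either type of Cartan.

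To rule out (c) I would use (ii). The quantity $u := \tr(t)^2/\det(t)$ is invariant under scaling and so descends to a function of $\bar t\in\operatorname{PGL}_2(\F_\ell)$; writing $r=\lambda/\mu$ for the eigenvalue ratio of $t$, one has $u=r+2+1/r$, and the order of $\bar t$ equals the order of $r$ in $\overline{\F}_\ell^*$. A direct computation shows that $\bar t$ has order $1,2,3,4,5$ exactly when $u=4,0,1,2$, or $u^2-3u+1=0$ respectively (the order-$5$ case comes from eliminating $r$ between $y=r+1/r$ and the cyclotomic relation $r^4+r^3+r^2+r+1=0$, giving $y^2+y-1=0$, hence $u^2-3u+1=0$). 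The exclusions in (ii) therefore force $\operatorname{ord}(\bar t)\geq 6$, and since no element of $A_4, S_4,$ or $A_5$ has order exceeding $5$, case (c) is impossible.

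Hence $\bar K\supseteq\operatorname{PSL}_2(\F_\ell)$, and it remains to lift back to $\SL_2$. Since $\operatorname{PSL}_2(\F_\ell)$ is simple and in particular perfect for $\ell\geq 5$, the commutator subgroup $[K,K]$ surjects onto $[\operatorname{PSL}_2,\operatorname{PSL}_2]=\operatorname{PSL}_2(\F_\ell)$; but $[K,K]\subseteq\SL_2(\F_\ell)$ because $\GL_2/\SL_2$ is abelian. Therefore $H:=K\cap\SL_2(\F_\ell)$ already surjects onto $\operatorname{PSL}_2(\F_\ell)$, giving $[\SL_2(\F_\ell):H]\leq 2$; and since $\SL_2(\F_\ell)$ itself is perfect for $\ell\geq 5$, it has no index-$2$ subgroup, so $H=\SL_2(\F_\ell)\subseteq K$. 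The main obstacle (and the only genuinely nonelementary input) is Dickson's classification; with that in hand, the hypotheses in (i) and (ii) are calibrated precisely to kill the listed alternatives, the small-order eigenvalue-ratio calculation for (ii) being the most delicate piece of bookkeeping.
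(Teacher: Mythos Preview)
The paper does not give its own proof of this proposition; it is simply quoted from Serre's 1972 paper \cite{serre} as Proposition 19 and used as a black box. Your sketch is correct and is in fact exactly Serre's original argument: reduce to the image in $\operatorname{PGL}_2(\F_\ell)$, invoke Dickson's classification, use the discriminant conditions in (i) together with $\tr(s_i)\neq 0$ to eliminate the Borel and the two Cartan-normalizer cases, and use the scaling-invariant $u=\tr(t)^2/\det(t)$ to force an element of projective order at least $6$, ruling out $A_4$, $S_4$, $A_5$. The eigenvalue-ratio computation you give for orders $1$ through $5$ and the lift from $\operatorname{PSL}_2$ to $\SL_2$ via perfectness are both right. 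There is nothing to compare against here beyond noting that you have reproduced Serre's proof.
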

We compute the following table. By $t_{v}$ and $N_{v}$, we mean the trace of the Frobenius corresponding to $v$ and the
cardinality of the residue field corresponding to $v$.
\begin{align*}
\begin{array}{c|c|c|c}
  v  & t_{v}^{2} - 4N_{v}^{2} \imod{31} & u = t_{v}^{2}/N_{v} \imod{31} & u^{2} - 3u + 1 \imod{31} \\\hline
  \mf{p}_{11} = (\alpha - 2) & 3 & 24 & 9 \\
  \mf{q}_{11} = (-5\alpha^{2} + \alpha - 3) & 17 & 19 & 26 \\
  \mf{q}_{13} = (-2\alpha + 1) & 14 & 17 & 22
\end{array}
\end{align*}
Let $s_{1} := \rho_{E, 31}(\frob_{w_{1}})$ for any fixed $w_{1} \mid \mf{p}_{11}$, $s_{2} := \rho_{E, 31}(\frob_{w_{2}})$
for any fixed $w_{2} \mid \mf{q}_{13}$, and $t := \rho_{E, 31}(\frob_{w_{3}})$ for any fixed $w_{3} \mid \mf{q}_{11}$.
In Proposition \ref{serreprop}, take $K := H(31)$. By the above table and Proposition \ref{serreprop},
$H(31)$ contains $\operatorname{SL}_{2}(\F_{31})$. Since $\det: H(31) \rightarrow \F_{31}^{\ast}$ is surjective,
we have $H(31) = \gl_{2}(\F_{31})$. Then by Proposition \ref{refinecor}, $H_{31} = \gl_{2}(\Z_{31})$.

Now consider the $\ell = 3$ case.
Consider the prime ideal $\mf{q}_{29} := (3\alpha^{2} + 2)$ which lies over 29 and let $\pi \in H_{3}$ be $\rho_{E, 3}(\frob_{w})$
for any $w \mid \mf{q}_{29}$. Then for $v = \mf{q}_{29}$, $N_{v} = 29$ and $t_{v} = 6$ and hence the characteristic polynomial
of $\pi$ is $t^{2} - 6t + 29 \equiv (t - 7)(t - 8) \imod{9}$. The rest of the computation for this case remains exactly the same
as the $\ell = 3$ case considered by Greicius in \cite{greicius}. Therefore $H_{3} = \gl_{2}(\Z_{3})$.

Finally we consider the $\ell = 2$ case. Recall that since $[K(E[4]) : K] = 16$, $H(4)$, the mod 4 image is maximal.
We will now use Proposition \ref{2adicprop}. Letting $\mf{p} := (157)$ in the proposition
gives that $\card(\mathcal{O}_{K}/\mf{p}) = 3869893 \equiv 5 \imod{8}$. Computation in $\textsc{Sage}$ shows that
$E$ has full four-torsion over $\mathcal{O}_K/\mf{p}$. This verifies that $H_{2} = V_{1}(2)$.

The three conditions of Theorem \ref{conditions} are now verified as desired.

\begin{rem}
We have written code in \textsc{Sage} that automates the checking we did above and gives the prime ideals that one needs to satisfying the
given conditions (see \texttt{check\_sage.sws} in \cite{transcript}).
\end{rem}

\section{Constructing an Infinite Family}\label{infinite}
Fix $K = \mathbb{Q}(\alpha)$ where $\alpha$ is the real root of $x^3+x+1$. In this section, we apply our criteria to obtain
an \emph{infinite} family of elliptic curves with maximal adelic image given full two-torsion over $K$. To do so, we obtain families
of curves satisfying each of the requisite conditions:
\begin{enumerate}[(i)]
\item semistability,
\item $K(E[4]) \cap K^{\cyc} = K(i)$, and
\item maximal $\ell$-adic image for each prime $\ell$.
\end{enumerate}

We are able to find congruence conditions on the coefficients of our elliptic curves ensuring each of these conditions, which we
may then piece together using the Chinese remainder theorem.

Throughout this section, with $b, c \in \Z$, let $E_{b,c}$ be the elliptic curve defined by
\[
E_{b,c}/K :  y^2 = x(x-(\alpha^2 + b\alpha + c))(x-16(\alpha^2+\alpha+1)).
\]

\subsection{A family of semistable elliptic curves}
Since the criteria we use for verifying maximality apply only to semistable curves,  we must first find
infinite families of semistable elliptic curves of the desired form. Such a family is furnished by the following proposition.

\begin{prop}
Suppose that one of the following two conditions is satisfied:
\begin{enumerate}[(i)]
\item $b \equiv 5 \imod{12}$ and $c \equiv 4 \text{ or } 8 \imod{12}$.
\item $b \equiv 9 \imod{12}$ and $c \equiv 4 \imod{12}$.
\end{enumerate}
Then
\[
E/K: y^2 = x(x-(\alpha^2 + b \alpha + c)) (x-16(\alpha^2 + \alpha + 1))
\]
is semistable.
\end{prop}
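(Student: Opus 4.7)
The plan is to run through each potential prime of bad reduction and verify multiplicative (or good) reduction there. Write $e_2 := \alpha^2 + b\alpha + c$ and $e_3 := 16(\alpha^2 + \alpha + 1)$, so the given Weierstrass model is $y^2 = x(x - e_2)(x - e_3)$ with discriminant $\Delta = 16 \, e_2^2 \, e_3^2 \, (e_2 - e_3)^2$; the primes of bad reduction divide the ideal $(2 \, e_2 \, e_3 (e_2 - e_3))$.

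First I would factor $(e_3)$ as an ideal of $\calO_K$. A direct norm computation using $\alpha^2 + \alpha + 1 = (\alpha^3 - 1)/(\alpha - 1)$ together with the minimal polynomial $f(x) = x^3 + x + 1$ gives $N_{K/\Q}(\alpha^2 + \alpha + 1) = 3$, so $(\alpha^2 + \alpha + 1) = \frakp_3$, where $\frakp_3$ is the prime of residue degree one above $3$ (characterized by $\alpha \equiv 1 \pmod{\frakp_3}$). Since $x^3 + x + 1$ is irreducible modulo $2$, the rational prime $2$ is inert in $K$, with unique prime $\frakp_2$ of residue field $\mathbb{F}_8$. Hence $(e_3) = \frakp_2^4 \, \frakp_3$.

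The core observation is that for any prime $\mathfrak{P}$ of $\calO_K$ of odd residue characteristic, the model $y^2 = x(x-e_2)(x-e_3)$ has additive reduction at $\mathfrak{P}$ if and only if the three roots $0, e_2, e_3$ all coincide modulo $\mathfrak{P}$, equivalently $\mathfrak{P}$ divides both $e_2$ and $e_3$. Consequently, at any prime $\mathfrak{P}$ lying above a rational prime $\ell \geq 5$, or at the other prime $\mathfrak{q}_3$ above $3$ (which has residue field $\mathbb{F}_9$ and does not divide $e_3$ since $\alpha^2 + \alpha + 1 \equiv 2 \pmod{\mathfrak{q}_3}$), additive reduction is impossible and semistability holds automatically. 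At $\frakp_3$, where $\alpha \equiv 1$, we have $e_2 \equiv 1 + b + c \pmod{\frakp_3}$; a case check on the three allowed residue classes of $(b \bmod 3, \, c \bmod 3)$ --- namely $(2,1)$, $(2,2)$, and $(0,1)$ --- shows $b + c \not\equiv 2 \pmod 3$, so $e_2$ is a unit at $\frakp_3$ and the reduction there is multiplicative.

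The main obstacle is the analysis at $\frakp_2$, where residue characteristic $2$ forces one to leave the form $y^2 = f(x)$ (which is never smooth in characteristic $2$). Under either condition (i) or (ii) we have $b \equiv 1 \pmod 4$ and $c \equiv 0 \pmod 4$, so $e_2 \equiv \alpha^2 + \alpha \pmod{\frakp_2^2}$; since $\alpha(\alpha + 1)$ is a unit in $\mathbb{F}_8$, this gives $v_{\frakp_2}(e_2) = 0$. A direct computation from the standard formulas for $c_4$ and $c_6$ then yields $v_{\frakp_2}(c_4) = 4$, $v_{\frakp_2}(c_6) = 6$, and $v_{\frakp_2}(\Delta) = 12$, whence $v_{\frakp_2}(j_E) = 0$. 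The plan is to use that $\mathbb{F}_8$ is perfect to construct a Weierstrass change of variables $(x, y) \mapsto (4X + r, \, 8Y + 4sX + t)$ with $r, s, t \in \calO_K$ producing an integral model with $v_{\frakp_2}(\Delta_{\min}) = 0$; equivalently, one runs Tate's algorithm at $\frakp_2$ and verifies that under the congruence conditions it terminates at Kodaira type $I_0$ (good reduction).
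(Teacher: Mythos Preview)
Your approach is essentially the same as the paper's: show that $e_2$ and $e_3$ are coprime in $\mathcal{O}_K$ (so that at most two of the roots $0,e_2,e_3$ can collide modulo any prime), deduce semistability at every odd prime from the resulting fact that $c_4 = 16(e_2^2 - e_2 e_3 + e_3^2)$ is a unit there, and then treat the inert prime above $2$ by a substitution with $u=2$. The paper carries out this last step explicitly via $(x,y)\mapsto(4x,\,8y+4\alpha^2 x)$, obtaining an integral model with $c_4' = e_2^2 - e_2 e_3 + e_3^2 \equiv e_2^2 \pmod 2$ a $2$-adic unit, which confirms (and shortcuts) your planned Tate-algorithm verification of good reduction at $\mathfrak{p}_2$.
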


\begin{proof}
Since $\mathcal{O}_{K}$ is a unique factorization domain, we will speak freely of primes of $K$ as numbers. We establish the result for
case $(i)$. The proof for case $(ii)$ is identical.

Let $A = \alpha^2 + b \alpha + c$, $B = 16(1 + \alpha + \alpha^2)$, and $C = A-B$. We first claim that $A$ and $B$ are coprime in
$K$. Since $\alpha^3 + \alpha + 1 =0$,
$1 + \alpha + \alpha^2 = \alpha^2 - \alpha^3 = \alpha^2(1 - \alpha)$.
Moreover, $\alpha$ is a unit. Furthermore, $N_{K/\Q}(1 - \alpha) = f(1) = 3$.
So $1 + \alpha + \alpha^2$ is a prime lying over $3$.

We also easily check that $2$ is inert in $K$. Hence it suffices to prove that $A$ is coprime to 2 and $1 - \alpha$.
The former is clear. For the latter, observe that
$$A = \alpha^{2} + b\alpha + c \equiv 1 + b + c \imod{1 - \alpha}.$$
But $b \equiv 2 \imod{3}$ and $c \equiv 1 \text{ or } 2 \imod{3}$, so $1 + b + c \not\equiv 0 \imod{3}$.

So we see that $A$ and $B$ are coprime, hence $\{A,B,C\}$ are pairwise coprime. Now we note that for this Weierstrass form,
\begin{align*}
\Delta = 16 (ABC)^{2} \quad \text{ and } \quad c_4 = 16(A^2 - AB + B^2)
\end{align*}
If $\mf{p} \neq (2)$ is a prime dividing $\Delta$, then it divides exactly one of $A,B,C$. Supposing that $\mf{p} \mid A$,
we see that $c_4 \equiv 16B^2 \not\equiv 0 \imod{\mf{p}}$, so $E$ is not semistable at $\mf{p}$. Similarly, if $\mf{p} \mid B$
the curve is semistable at $\mf{p}$. Finally, if $\mf{p} \mid C = A-B$, then
\[
c_4 = 16  (A^2-AB+B^2) \equiv 16A^2  \not\equiv 0 \imod{\mf{p}}.
\]
The only prime left to consider is $\mf{p}=(2)$. Making the change of variables $(x,y) \mapsto (4x, 8y + 4\alpha^2 x)$,
and noting that $\alpha^4 = -\alpha(\alpha+1)$, we obtain the Weierstrass equation
$$y^{2} + \alpha^{2}xy = x^{3} + \frac{-A -B + \alpha(\alpha + 1)}{4}x^{2} - \frac{AB}{16}x.$$
Since $A = \alpha^{2} + b \alpha + c$, we see that $A -\alpha - \alpha^2 $ is divisible by $4$, and hence the coefficients
are in $\mathcal{O}_K$. For this Weierstrass form, $\Delta = (ABC)^{2}/16$ and $c_{4} = A^{2} - AB + B^{2}$.
Since $2 \mid B$ and $2 \nmid A$, we see that $E$ is semistable at $2$ as well.
\end{proof}

\subsection{A family with maximal mod $4$ image}\label{family_mod4}
In this section, fix an elliptic curve $E_{b, c}$.
To verify maximal $2$-adic image, we must first have maximal mod $4$ image and then apply Proposition \ref{2adicprop}. That
$\rho_4(G_K)$ is maximal is equivalent to $[K(E[4]):K] = 16$.
We will describe a procedure that may be used to determine congruence conditions on $b$ and $c$ which ensure
that $E$ has maximal mod $4$ image. As an example, we prove the following:

\begin{prop}\label{mod4prop}
Let
\[
E/K \colon y^2 = x(x-(\alpha^2 + b\alpha + c))(x-16(\alpha^2+\alpha+1)),
\]
where
$b \equiv 3699 \imod{4991}$ and $c \equiv 4183 \imod{4991}.$
Then $\rho_{E,4}(G_K)$ is maximal.
\end{prop}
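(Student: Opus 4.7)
The plan is to prove $[K(E[4]):K] = 16$, which, since the mod-$4$ image lies in the kernel $V_1(2)/V_2(2) \cong \mathrm{Mat}_{2 \times 2}(\mathbb{F}_2)$ of order $16$, is equivalent to maximality of $\rho_{E,4}(G_K)$. By the identification used in the proof of Example \ref{single_example},
\[
K(E[4]) = K\bigl(\sqrt{d_1},\sqrt{d_2},\sqrt{d_3},\sqrt{d_4}\bigr),
\]
where $d_1, d_2, d_3$ are the discriminants of the three quadratic factors of the $4$-division polynomial and $d_4 = \sqrt{\Delta} \in K$. Hence it suffices to prove that the classes $[d_1],\,[d_2],\,[d_3],\,[d_4]$ are linearly independent over $\mathbb{F}_2$ in $K^\times/(K^\times)^2$.

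For the model $y^2 = x(x-A)(x-B)$ with $A = \alpha^2 + b\alpha + c$ and $B = 16(\alpha^2+\alpha+1)$, standard formulas give each $d_i$ as an explicit polynomial in $\alpha, b, c$; in fact, up to square factors, $d_1 \equiv AB$, $d_2 \equiv A(A-B)$, $d_3 \equiv -B(A-B)$, and $d_4 \equiv \pm AB(A-B)$ modulo $(K^\times)^2$. Linear independence then amounts to verifying that none of the $15$ nonempty products $\prod_{i\in S} d_i$ is a square in $K$. My plan is to check this by reducing modulo suitably chosen primes of $K$ lying above $7$, $23$, and $31$ (the prime factors of $4991 = 7 \cdot 23 \cdot 31$): for any prime $\mathfrak{p}$ of $K$ of odd residue characteristic coprime to the product in question, non-squareness of the reduction in $\mathcal{O}_K/\mathfrak{p}$ implies non-squareness in $K$.

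The given congruences $b\equiv 3699$ and $c \equiv 4183\pmod{4991}$ are designed, via the Chinese remainder theorem, so that for each of the $15$ candidate products there is a prime ideal of $K$ above one of $7, 23, 31$ whose residue field receives the product as a non-square. Concretely, one first factors each of these rational primes in $\mathcal{O}_K$, obtaining the five prime ideals above them and their residue fields. For each subset $S$, one then identifies such a prime $\mathfrak{p}_S$, substitutes the given residues of $b$ and $c$ modulo the rational prime below $\mathfrak{p}_S$, computes $\prod_{i \in S} d_i \bmod \mathfrak{p}_S$ in the corresponding residue field, and verifies non-squareness by evaluating the quadratic character.

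The main obstacle is the combinatorial bookkeeping: one must allocate to each of the fifteen subsets a test prime above $\{7,23,31\}$ where non-squareness can be checked for the stated congruence classes, and then confirm via CRT that these simultaneous constraints modulo $7$, $23$, and $31$ are consistent with the residues $3699$ and $4183$. Since this is a finite check whose size is fixed once the factorizations of $7$, $23$, $31$ in $\mathcal{O}_K$ are known, the verification is best automated in \textsc{Magma} or \textsc{Sage}.
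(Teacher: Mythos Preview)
Your approach is correct and is essentially the paper's: both show $[K(E[4]):K]=16$ by verifying that no nonempty product of $d_1,\dots,d_4$ is a square, witnessed by quadratic residues modulo prime ideals of $K$ above $7$, $23$, and $31$. The paper streamlines the bookkeeping you flag as the ``main obstacle'': rather than assigning a test prime to each of the fifteen subsets, it finds four primes $\mathfrak{p}_1,\mathfrak{p}_2,\mathfrak{p}_3,\mathfrak{p}_4$ (lying over $31,7,31,23$ respectively) with the property that $d_i$ is a non-square in $(\mathcal{O}_K/\mathfrak{p}_i)^\times$ while $d_j$ is a square there for $j\neq i$; then any nonempty product $\prod_{i\in S}d_i$ is automatically a non-square modulo $\mathfrak{p}_j$ for every $j\in S$. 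Also, once $b\equiv 3699$ and $c\equiv 4183\pmod{4991}$ are fixed, the $d_i$ are determined modulo each such prime and there is no further CRT consistency to check---the congruences were chosen (from a single witness pair $b=33645$, $c=19156$) precisely so that this four-prime pattern holds.
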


We emphasize that the congruence condition given in the proposition is just one of many possibilities.
We begin with a general discussion. Recall that
\[
K(E[4]) = K(x_1, x_2, x_3, \sqrt[4]{\Delta})
\]
where $x_1, x_2, x_3$ are the $x$-coordinates of $4$-torsion points not differing by a $2$-torsion point. We
may take $x_1, x_2, x_3$ to be the roots of the quadratics
\begin{align*}
x^2 &- 16(b+c) \alpha^2 - 16(c-2)\alpha - 16(c-b-1)   \\
x^2 &- 32 (\alpha^2+\alpha+1)x + 16( (b+c)\alpha^2 + (c-2)\alpha + (c-b+1))  \\
x^2 &+ (-2\alpha^2 - 2b\alpha - 2c)x + 16(b+c)\alpha^2 + 16(c-2)\alpha + 16(c-b-1)
\end{align*}
which we obtained by factoring the $4$-division polynomial for $E$, whose discriminants are, respectively,
\begin{align*}
d_1 &= 2^6 ((b+c)\alpha^2 + (c-2)\alpha+ (c-b-1)) \\
d_2 &= 2^6 \left(16  (1+\alpha+\alpha^2)^2 -  ( (b+c)\alpha^2 + (c-2)\alpha + (c-b-1)) \right) \\
d_3 &= 4(\alpha^2 + b\alpha + c)^2 - 64((b+c)\alpha^2 + (c-2)\alpha + (c-b-1)).
\end{align*}
In addition,
\[
d_4:= \sqrt{\Delta} = 64  (\alpha^2+b\alpha+c)  (\alpha^2+\alpha+1)  ( 15  \alpha^2 + (16-b) \alpha + (16-c)).
 \]
So
\[
K(E[4]) = K(\sqrt{d_1}, \sqrt{d_2}, \sqrt{d_3}, \sqrt{d_4}).
\]
This is a degree $16$ extension of $K$ if and only if all $15$ of the extensions $K(\sqrt{N})$, where $N$ is the
product of some non-empty subset of $\{d_1, d_2, d_3, d_4\}$, are
nontrivial, or equivalently, if and only if all such $N$ are not squares in $\mathcal{O}_K$.

So to ensure this, it suffices to find prime ideals $\mf{p}_1, \mf{p}_2, \mf{p}_{3}, \mf{p}_{4}$ such that for each
$i = 1,2,3,4$, we have $d_i$ is a non-square in $(\mathcal{O}_K/\mf{p}_i)^{\times}$ and $d_j$ is a square
for $j \neq i$. If this is the case, then the product of any non-empty subset of $d_1, d_2, d_3, d_4$ will fail to be a square in
\[
\left( \mathcal{O}_K/\mf{p}_1 \mf{p}_2 \mf{p}_{3} \mf{p}_{4} \right)^{\times} \simeq  \left(\mathcal{O}_K/\mf{p}_1 \right)^{\times} \times \left(\mathcal{O}_K/\mf{p}_2\right)^{\times} \times \left(\mathcal{O}_K/\mf{p}_3 \right)^{\times} \times \left(\mathcal{O}_K/\mf{p}_4 \right)^{\times},
\]
hence is obviously not a square in $\mathcal{O}_K$. Once ideals satisfying these conditions are found for a particular curve
\[
E : y^2 = x(x-(\alpha^2 + b\alpha + c))(x-16(\alpha^2+\alpha+1)),
\]
where $b, c \in \mathcal{O}_K$, then they will work for any curve
\[
E' : y^2 = x(x-(\alpha^2 + b'\alpha + c'))(x-16(\alpha^2+\alpha+1)).
\]
with $b \equiv b' \imod{\mf{p}_1 \mf{p}_2 \mf{p}_{3} \mf{p}_{4} } $ and $c \equiv c' \imod{\mf{p}_1 \mf{p}_2 \mf{p}_{3} \mf{p}_{4} }$.

We carried out this program by computer search (see \texttt{test\_primes.sws} in \cite{transcript}). Many examples were found, but for brevity we simply list one.

\begin{proof}[Proof of Proposition \ref{mod4prop}]
With $b = 33645$ and $c = 19156$, the prime ideals
\begin{align*}
\mf{p}_1 = (\alpha-3), \quad \mf{p}_2 = (7), \quad \mf{p}_3 = (-\alpha^2 + 3\alpha - 1), \quad \mf{p}_4 = (\alpha^2 + 4\alpha - 3)
\end{align*}
satisfy the desired conditions. These lie over 31, 7, 31, and 23 respectively, so the congruence condition is over $7 \times 23 \times 31 = 4991$.
\end{proof}

\subsection{A family with $K(E[4]) \cap \Kcyc = K(i)$}
In this section, we describe congruence conditions on $b,c$ which ensure that $E_{b,c}$ satisfies the field intersection condition
\[
K(E[4])\cap \Kcyc  = K(i).
\]
Let $d_1, d_2, d_3$, and  $d_4$ be as Section \ref{family_mod4}. Collecting terms together, we find
the following expressions.
\begin{align}\label{d3}
d_3 = (16 + 14 b - 16 c + c^2) + (31 - 16 c + 2 bc - 2b) \alpha + (b^2 - 16 b - 14 c - 1) \alpha^2 := x+y \alpha +z \alpha^2
\end{align}
and
\begin{align}\label{d4}
d_4 = -64(\alpha^2 + \alpha + 1)d_3.
\end{align}
Our key observation is that $d_4$ can be divisible by only a few rational primes, which limits the possible intersection by Lemma \ref{intersection_thm}. This is made precise in the proof of the proposition below.

\begin{prop}\label{family_intersect}
Assume that 
\[
(b,c) \not\equiv \begin{cases}  (1,1) \text{ or } (1,2) & \imod{3} \\ (1,1) & \imod{5} \\   (2,5) & \imod{11} \\
(4,5) & \imod{17} \\ (14,11) \text{ or } (23,6) \text{ or } (25, 9)& \imod{31} \\ (467, 91) & \imod{787} \\
(626, 280) & \imod{827}
\end{cases}
\]
Then $E_{b, c}$ satisfies $K(E_{b,c}[4]) \cap \Kcyc = K(i)$.
\end{prop}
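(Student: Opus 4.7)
The strategy is to invoke the contrapositive of Proposition \ref{intersection_thm}: I will show that under the stated congruences, for every non-empty subset $T_0 \subseteq T = \{d_1, d_2, d_3, d_4\}$ and every $s \in \mathcal{P}_S$, the ratio $t/s$ with $t = \prod_{d \in T_0} d$ is not a square in $K$, forcing $K(E_{b,c}[4]) \cap K^{\cyc} = K(i)$.

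The first step is to exploit equation \eqref{d4}, namely $d_4 = -64(\alpha^2+\alpha+1)d_3$. Since $64$ is a rational square and $\alpha^2+\alpha+1 = \alpha^2(1-\alpha)$ with $\alpha \in \mathcal{O}_K^\times$, we have $d_4 \equiv (\alpha - 1)\, d_3$ modulo $(K^\times)^2$. Thus every $t \in \mathcal{P}_T$ is congruent modulo squares to an element of the form $d_1^{a_1} d_2^{a_2} d_3^{a_3} (\alpha-1)^{a_4}$ with $(a_i) \in \{0,1\}^4 \setminus \{(0,0,0,0)\}$, and the element $\alpha-1$ is a prime of $\mathcal{O}_K$ of norm $-3$, so its contribution to any factorization is explicit.

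For each of the resulting reduced products, I would enumerate the candidate rational denominators $s \in \mathcal{P}_S$ by comparing $\mathfrak{p}$-adic valuations: for $t/s$ to be a square in $K$, the valuation $v_\mathfrak{p}(t/s)$ must be even at every prime $\mathfrak{p}$ of $\mathcal{O}_K$, which linearly constrains the exponent vector of $s$ in terms of the factorization of $t$ as a polynomial expression in $b, c, \alpha$. Having fixed a compatible $s$, non-squareness is certified locally: one chooses a rational prime $p$ and a prime $\mathfrak{p} \mid p$ in $\mathcal{O}_K$ at which $t/s$ is a unit, and checks whether the residue of $t/s$ in $(\mathcal{O}_K/\mathfrak{p})^\times$ is a quadratic non-residue. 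Because this residue is a polynomial in $b$ and $c$ modulo $p$, the obstruction becomes a finite list of excluded congruence classes in $(\Z/p\Z)^2$.

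The primes $3, 5, 11, 17, 31, 787, 827$ appear as a minimal collection of witness primes sufficient to certify non-squareness in every case: small primes handle most of the reduced products, while the large primes $787$ and $827$ are forced by a handful of products which admit no small-prime witness, presumably because they appear as rational factors in the norm of a particular candidate square root of some product $t$. The main obstacle is the exhaustive case analysis over the $15$ reduced products, their admissible $s$'s, and the corresponding witness primes; this is a finite, essentially mechanical computation consistent with the \textsc{Magma}-based verifications used elsewhere in the paper.
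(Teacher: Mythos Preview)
Your approach has a genuine gap: it is essentially the single-curve verification carried out in Example~\ref{single_example}, and it does not adapt to an infinite family. The obstruction is that the set $S$ of rational primes below primes dividing $2\Delta_{b,c}$ depends on $(b,c)$ and is unbounded as $b,c$ range over $\Z$. Consequently you cannot ``enumerate the candidate rational denominators $s\in\mathcal{P}_S$'' uniformly in $(b,c)$: for generic $(b,c)$ the discriminant $\Delta_{b,c}$ picks up arbitrarily large prime factors, the compatible $s$'s change accordingly, and the residue of $t/s$ at a fixed prime $\mathfrak{p}$ is not a polynomial in $b,c$ alone (since $s$ itself depends on the factorization of $\Delta_{b,c}$). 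Your witness-prime strategy therefore never terminates in a fixed finite list, and the passage ``having fixed a compatible $s$ \ldots\ the obstruction becomes a finite list of excluded congruence classes'' is where the argument breaks.

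The paper takes a structurally different route. Rather than enumerating $(t,s)$ pairs, it argues that if $K(E_{b,c}[4])\cap K^{\cyc}\supsetneq K(i)$ then some \emph{rational} prime $p$ must divide $d_1d_2d_3d_4$ in $\mathcal{O}_K$, and (after disposing of $2$, $3$, and $31$ via \eqref{d4} and the ramification of $K$) in fact $p\mid d_3$. Writing $d_3=x+y\alpha+z\alpha^2$ with $x,y,z\in\Z[b,c]$ as in \eqref{d3}, divisibility of $d_3$ by the rational prime $p$ forces $p\mid x$, $p\mid y$, $p\mid z$ simultaneously. Eliminating $b$ and $c$ from these three congruences by explicit integer linear combinations (a resultant-style computation) yields $p\mid 618889059855$, whose prime factorization is $3\cdot 5\cdot 11\cdot 17\cdot 113\cdot 787\cdot 827$. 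This is the actual source of the finite prime list, and in particular of $787$ and $827$; your speculation that they arise as norm factors of candidate square roots is not the mechanism. The excluded congruence classes in the statement are then obtained by a finite search over $(b,c)\bmod p$ for which $p\mid\Delta_{b,c}$.
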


\begin{proof}
Let $\Delta_{b, c}$ denote the discriminant of $E_{b, c}$.
Assume that $b, c$ are such that $K(E_{b,c}[4]) \cap \Kcyc \neq K(i)$.  Then by Lemma \ref{intersection_thm},
$K(\sqrt{n}) = K(\sqrt{m})$, where $n$ is an integer and $m$ is a product of some subset of $d_1, d_2, d_3, d_4$.
As $n$ and $m$ can only differ by a square of $K$, we have that some prime of $\Q$ divides $m$, hence some prime $p$
of $\Q$ must divide $d_1 d_2 d_3 d_4$.

Note that any prime of $K$ dividing $d_1, d_2, d_3$ must ramify in $K(\sqrt{d_i})/K$, and hence must divide $\Delta_{b, c}$
(as we already have that $2 \mid \Delta_{b, c}$). For primes of $K$ not lying above 31, dividing $\Delta_{b, c}$ is equivalent to
dividing $d_4$.   So $p \mid  d_1 d_2 d_3 d_4$ implies that either $p \mid  d_4$ or $p = 31$ and $p \mid  \Delta_{b, c}$.

Assume that $p \mid  d_4$.  Then, using \eqref{d4}, as $\alpha^2 + \alpha + 1$ is a prime above 3, either $p = 2, 3, $ or $p \mid  d_3$.  If
$p \mid  d_3$, then we must have that $p \mid  x, y, z$ (as defined in \eqref{d3}), so $p \mid  c_x x + c_y y + c_z z$ for any $c_x, c_y, c_z \in \calO_K$.
We use $c_x = 28, c_y = 8 - b, c_z = -2 + 2 c$ to get that $p \mid  698 + 377 b - 550 c.$  We use $c_x = - 2 b + 16, c_y = c - 15, c_z = 28$
to get that $p \mid  -237 - 226 b - 377 c.$ We use $c_x = -4 z - 56 c - 260, c_y = y - 28 b + 586, c_z = 56 b + 4$
to get that $ p \mid  15027 - 4844 b - 6328 c.$ Putting these three conditions together, it follows that $p \mid  618889059855$,
and then factoring yields that $p = 3$, $5$, $11$, $17$, $113$, $787$, or $827$.

Using the condition $b \equiv 1 \imod{4}, c \equiv 0 \imod{4}$, we have $2^6 \| d_1$, $2^6 \| d_2$,
$2^2 \| d_3, 2^8 \| \Delta_{b, c}$, so we can ignore powers of two when taking the square root.

Note that if for some choice of $b, c$, $p \nmid \Delta_{b, c}$, then $p \nmid \Delta_{b', c'}$ for any
$b', c' \equiv b, c \imod{p}$. Therefore, we can know which pairs $(b, c)$ have no rational primes dividing $(\Delta_{b, c})$
by simply checking if $p \mid (\Delta_{b, c})$ for $0 < b, c \leq p$ with $p = 3, 5, 11, 17, 31, 113, 787, 827$ (see \texttt{bc\_search.txt} in \cite{transcript}).
\end{proof}

Therefore, by avoiding the above values, we can easily chose congruence conditions on $b, c$ which will guarantee that
no rational prime divides $d_1 d_2 d_3 d_4$, and hence $K(E[4]) \cap \Kcyc = K(i)$.  We choose
$(b, c)$ to be congruent to $(2, 1) \imod{3}$, $(2, 4) \imod{5}$, $(6, 4) \imod{11}$, $(0, 0) \imod{17}$, and $(10, 29) \imod{31}$.

\subsection{A family with maximal $\ell$-adic image}
The tests for the remaining conditions can be easily modified into congruence conditions, as they all involve finding primes
$\frakp$ of $K$ such that $\mathcal{O}_K/\frakp$ and $E \imod{\frakp}$ satisfy certain properties, all of which do not change if we
fix the values of $b, c \imod{\frakp \cap \Z}$.  We will simply state the primes used, as the calculations are analogous to those
in Section \ref{single_example}.

\begin{prop}
Let $b_0= 17, c_0 = 4$.  If $b, c \equiv b_0, c_0 \imod{3 \cdot 5 \cdot 11 \cdot 29\cdot 47}$, $E_{b,c}$ has maximal
mod $4$ image, and $E_{b, c}$ is semi-stable, then $E_{b, c}$ has maximal $\ell$-adic image for all $\ell$.
\end{prop}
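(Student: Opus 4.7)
The plan is to adapt the verification of Example \ref{single_example} so that every step depends only on $(b, c) \bmod{3 \cdot 5 \cdot 11 \cdot 29 \cdot 47}$, and hence transfers uniformly to every member of the congruence class. Throughout, I would work with the specific base curve $E_{b_0, c_0}$ and note that each prime $\mathfrak{p}$ of $K$ lying above one of the primes in our modulus has the property that the reduction $\wt{E}_{b,c} \bmod \mathfrak{p}$, together with its point count and Frobenius data, depends only on $(b, c) \bmod{\mathfrak{p} \cap \Z}$ and is therefore fixed.

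First I would invoke Proposition \ref{greiciusprop} for primes $\ell$ unramified in $K = \Q(\alpha)$ (i.e.\ all $\ell \neq 31$). Its field-theoretic hypotheses on $K$ were verified by Greicius, semistability is a hypothesis of our proposition, and the auxiliary condition $\ell \nmid v(j_{E_{b,c}})$ for $\ell \in \{2,3,5\}$ is read off from the discriminant computation in Section \ref{family_mod4}: the needed valuations depend only on $(b,c)$ modulo small primes in our modulus. The conclusion is that any such $\ell$ with $H(\ell) \neq \GL_2(\F_\ell)$ must divide $\#\wt{E}_{b,c}(k_\mathfrak{p})$ for every prime $\mathfrak{p}$ of good reduction. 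Choosing several primes $\mathfrak{p}$ above $\{3,5,11,29,47\}$ and computing the GCD of these fixed group orders leaves a small, explicit list of potentially exceptional $\ell$.

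Next, for each exceptional $\ell \geq 5$ in that list, together with the unique ramified prime $\ell = 31$, I would produce Frobenius witnesses satisfying the hypotheses of Proposition \ref{serreprop} using primes of $K$ above $\{3,5,11,29,47\}$; the trace-determinant data $(t_\mathfrak{p}^2 - 4 N_\mathfrak{p}, \; t_\mathfrak{p}^2/N_\mathfrak{p}) \bmod \ell$ for such a Frobenius is fixed by our congruence conditions, so the witnesses work for all $(b,c)$ in the class. Proposition \ref{refinecor} together with surjectivity of the determinant (which holds because $K \cap \Q^{\cyc} = \Q$) then upgrades each $H(\ell) = \GL_2(\F_\ell)$ to $H_\ell = \GL_2(\Z_\ell)$. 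For $\ell = 3$ I would mirror the mod-$9$ argument of Example \ref{single_example} by finding a prime of $K$ above one of $\{5,11,29,47\}$ whose Frobenius has characteristic polynomial with distinct roots mod $9$, verifying via Corollary \ref{ladiccriteria} that $H_3 = \GL_2(\Z_3)$. For $\ell = 2$, the hypothesis of maximal mod $4$ image combined with Proposition \ref{2adicprop} suffices: I would seek a prime $\mathfrak{p}$ of residue degree $1$ above $5$ or $29$ (the only primes $p$ in the modulus with $p \equiv 5 \imod 8$) such that $\wt{E}_{b_0, c_0}$ has full $4$-torsion over $\calO_K/\mathfrak{p}$, a condition that depends only on $(b,c) \bmod p$.

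The main obstacle is not conceptual but combinatorial: one must verify that the specific modulus $3 \cdot 5 \cdot 11 \cdot 29 \cdot 47$ is rich enough to simultaneously supply (i) primes $\mathfrak{p}$ that cut the GCD down to a tractable exceptional set, (ii) Serre witnesses for each surviving $\ell \geq 5$ as well as for $\ell = 31$, (iii) a mod-$9$ witness for $\ell = 3$, and (iv) a residue-degree-one prime above $5$ or $29$ satisfying the hypothesis of Proposition \ref{2adicprop} for $\ell = 2$. Each requirement reduces to a finite computer search at the single curve $E_{b_0, c_0}$, and the content of the proposition is precisely that this search terminates successfully for the stated $(b_0, c_0)$ and modulus.
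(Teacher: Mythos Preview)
Your proposal is correct and follows essentially the same strategy as the paper. The only difference is in economy: whereas you anticipate taking a GCD over several primes above $\{3,5,11,29,47\}$ and then applying Proposition~\ref{serreprop} to a residual list of exceptional $\ell\ge 5$, the paper observes that the single prime $\mathfrak{p}=(\alpha^2+\alpha+2)$ above $3$ already gives $\#\wt{E}_{b_0,c_0}(\mathcal{O}_K/\mathfrak{p})=2^3$, so Proposition~\ref{greiciusprop} immediately dispatches every $\ell\neq 2,31$; the remaining cases $\ell=31$, $\ell=3$ (via mod~$9$), and $\ell=2$ (via Proposition~\ref{2adicprop}) are then handled with primes above $\{3,5,11\}$, $47$, and $29$ respectively, exactly as you outline.
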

\begin{proof}
For a prime $\frakp$, let $E_\frakp$ denote $E_{b_0, c_0} \imod{\frakp}$. First, we use the prime $\frakp = (\alpha^2 + \alpha + 2)$
and compute that $\#E_\frakp(K/\frakp) = 2^3$.  Therefore $E_{b_0, c_0}$, the mod $\ell$ representation is surjective
for $\ell \neq 2, 31$.  Since $\frakp \cap \Z = (3)$, this shows that if $b, c \equiv b_0, c_0 \imod{3}$, then the same holds for $E_{b, c}$.

To show that the mod 8 representation is maximal for $E_{b_0, c_0}$, we consider the prime $\frakp = (3 \alpha^2 + 2)$.
As $\frakp \cap \Z = 29$, the same is true for any curve with $b, c \equiv b_0, c_0 \imod{29}$.

To show that the mod 9 representation is surjective for $E_{b_0, c_0}$, we use the prime $\frakp = (2 \alpha^2 + \alpha + 4)$,
which lies above $47$.  If $b, c \equiv b_0, c_0 \imod{47}$, the mod 9 representation is surjective for $E_{b, c}$.

Lastly, we must show that the mod 31 representation is surjective for $E_{b_0, c_0}$.  We let $w_1, w_2, w_3$ be primes such
that $w_1 \mid (-5 \alpha^2 + \alpha - 3)$, $w_2 \mid (5)$, and $w_3 \mid (\alpha^2 + \alpha + 2)$.  Since
$w_1 \cap \Z = 11$, $w_2 \cap \Z = 5$, $w_3 \cap \Z = 3$, if $b, c \equiv b_0, c_0 \imod{3 \cdot 5 \cdot 11}$,
the mod 31 representation is surjective.

Putting this together, we get, just as in the previous section, that the image of the mod $\ell$ representation is maximal for all $\ell$.
\end{proof}


\subsection{Proof of Theorem \ref{infinite}}
We have already developed congruence conditions for each condition.  We gather them together here.  For convenience, we use the notation
``$\textrm{cyc}_{p}$" to refer to the mod $p$ congruence condition in Proposition \ref{family_intersect}.
\begin{align*}
\begin{array}{c|c|c}
\text{Condition} & b & c\\\hline
\text{semistable} & 5 \imod{12} & 4 \imod{12}\\
\text{mod 9}      & 17\imod{47} & 4 \imod{47}\\
\text{mod 4}      &3699 \imod{7 \cdot 13 \cdot 31} & 4183 \imod{7 \cdot 13 \cdot 31}\\
\text{mod 31}     & 17 \imod{3 \cdot 5 \cdot 11}   & 4 \imod{3 \cdot 5 \cdot 11}\\
\text{mod 8}      & 17 \imod{29}& 4 \imod{29}\\
\text{$\ell$-adic}&2 \imod{3}   & 1 \imod{3}\\
\textrm{cyc}_{3}  & 2 \imod{3}  & 1 \imod{3}\\
\textrm{cyc}_{5}    & 2 \imod{5}  & 4 \imod{5}\\
\textrm{cyc}_{11}   & 6 \imod{11} & 4 \imod{11}\\
\textrm{cyc}_{17}   & 0 \imod{17} & 0 \imod{17}\\
\textrm{cyc}_{31}   & 10 \imod{31}& 29 \imod{31}\\
\textrm{cyc}_{787}  & 0 \imod{787}& 0\imod{787}\\
\textrm{cyc}_{827}  & 0 \imod{827}& 0\imod{827}
\end{array}
\end{align*}
Using the Chinese remainder theorem, we put these together to get the congruence condition from the statement of the theorem.
This completes the proof of Theorem \ref{infinite}.

\section{Non-Semistable Elliptic Curves and Nontrivial Narrow Class Group}\label{nonsemi}
Here we show how to extend Greicius's test for the surjectivity of the mod $\ell$ representation to arbitrary elliptic curves $E$ with non-integral $j_E$-invariant over arbitrary cubic number fields $K$. In particular, we do not need to suppose that the elliptic curve $E$ is semistable over $K$.

To fix notation, we let $\Sigma_K$ denote the set of primes of $K$ for a number field $K$. We let $S_E \subseteq \Sigma_K$ be
the set of primes of bad reduction of the elliptic curve $E/K$, and we let $S_\ell$ be the set of primes above the rational
prime $\ell$. For a prime $v \in \Sigma_K$, we let $I_v \subseteq G_K$ denote the inertia subgroup at $v$. If $w$ is a prime
lying over $v$, we denote by $f(w/v)$ and $e(w/v)$, the residue degree and ramification index of $w$ over $v$, respectively. If a Galois
extension is unramifed at $v$, we denote by $\Frob_v$ the Frobenius element of $v$ in the Galois group of that extension. We let
$U_v$ denote the $v$-adic units, $\Ov$ the completion of $\Ok$ at $v$, and $\frakm_v$ the maximal ideal of $\Ov$. We let $U_K$
denote the group of units of $\Ok$, and we let $U_K^+$ denote the subgroup of totally positive units. For a modulus $\frakm$,
in the sense of class field theory, we let $U_{\frakm,1}=U_K \cap K_{\frakm,1}$. All of this notation is identical to that of \cite{greicius} where applicable.

By a \emph{half-Borel} subgroup of $\GL_2(\Fl)$ 
we mean a subgroup stabilizing a subspace of $\Fl^2$ and acting
trivially on either the subspace or the quotient by it. In other words, some conjugate of the subgroup lies in the set of matrices of the form
$$\pmat{\ast}{\ast}{0}{1} \quad \text{or} \quad \pmat{1}{\ast}{0}{\ast}.$$
\begin{rem}
Throughout this section, words such as ``larger," ``smaller," and ``bounded" are used in the sense of divisibility rather than absolute value.
\end{rem}
To begin, we recall some useful results from \cite{greicius}. In \cite[Lemma 3.4]{greicius}, Greicius summarizes some results from \cite{serre}
on the inertia representation for the case of semistable elliptic curves. We restate these results, but
note that most of the results only depend on the curve being semistable at a particular prime.

\begin{lemma}[\cite{serre}]\label{serrelemma}
Let $K$ be a number field, $\ell$ a rational prime, and $E/K$ an elliptic curve with $j$-invariant $j_E$.
Fix $v \in \Sigma_K$ and $w \in \Sigma_{\cl{K}}$ with $w \mid v$.

First, suppose $v \notin S_E$.
\begin{enumerate}[(i)]
\item \label{good} If $v \notin S_\ell$, then $\rho_{E,\ell}(I_w)$ is trivial.
\end{enumerate}
Suppose for the next two that $\ell$ is unramified in $K$.
\begin{enumerate}[(i)]\setcounter{enumi}{1}
\item\label{ordinary} If $v \in S_\ell$, and $E$ has ordinary reduction at $v$, then the image of $D_w$ is contained in a
Borel subgroup, and $I_w$ acts via the trivial character and via a character of order $p-1$.
\item\label{supersingular} If $v \in S_\ell$, and $E$ has supersingular reduction at $v$, then the image of $I_w$ is a non-split Cartan subgroup.
\end{enumerate}
Suppose, furthermore, that $E$ is semistable at $v$.
\begin{enumerate}[(i)]\setcounter{enumi}{3}
\item \label{badnotdivides} If $v \in S_E \setminus S_\ell$, then $\rho_{E,\ell}(I_w)$ is trivial or cyclic of order $\ell$.
\item\label{tatepart} If $v \in S_E$, $\ell \nmid v(j_E)$, then $\rho_{E,\ell}(I_w)$ contains an element of order $\ell$.
\item If $v \in S_\ell$ and $\ell$ is unramified in $K$, then $\rho_{E,\ell}(I_w)$ is a half Borel subgroup.
\end{enumerate}
\end{lemma}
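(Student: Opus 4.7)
The plan is to derive each of the six assertions from standard results on the local Galois action on torsion of elliptic curves, all of which appear in Serre's paper \cite{serre}. I would split the argument into the good reduction cases (i)--(iii) and the (bad) semistable cases (iv)--(vi), since each group is handled by a different structural tool. In every case the claims concern only the restriction of $\rho_{E,\ell}$ to $G_{K_v}$, so I would fix $v$, pass to $\cl{K}_v$, and argue locally.

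For part (i) I would simply invoke the Néron--Ogg--Shafarevich criterion: good reduction at $v$ with $v \notin S_\ell$ forces $T_\ell(E)$ to be unramified, so $\rho_{E,\ell}(I_w)$ is trivial. For (ii) and (iii) the key input would be the connected--\'etale sequence of the $\ell$-divisible group of the N\'eron model at $v$,
\[
0 \to E[\ell^\infty]^{\circ} \to E[\ell^\infty] \to E[\ell^\infty]^{\mathrm{\acute{e}t}} \to 0.
\]
In the ordinary case both pieces have height one; the \'etale quotient is unramified, while the formal part is acted on through a Lubin--Tate character, and since $\ell$ is unramified in $K$ this character has image of order exactly $\ell-1 = p-1$ on $I_w$. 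The resulting filtration is stable under $D_w$, yielding the Borel structure of (ii). In the supersingular case the formal group has height two, no such filtration exists, and $I_w$ acts on $E[\ell]$ through the two level-two fundamental characters $\psi,\psi^\ell$; these visibly cut out a non-split Cartan subgroup of $\GL_2(\Fl)$, giving (iii).

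For the semistable bad-reduction cases (iv)--(vi) the main tool is Tate uniformization: after an unramified quadratic twist (needed only in the non-split multiplicative case), $E$ over $K_v$ is analytically isomorphic to $\cl{K}_v^\times/q^{\mathbb{Z}}$ for some $q \in K_v^\times$ with $v(q) = -v(j_E) > 0$, Galois-equivariantly. Taking the basis of $E[\ell]$ formed by $\zeta_\ell$ and a choice of $q^{1/\ell}$ puts $\rho_{E,\ell}$ into the explicit upper triangular form
\[
\rho_{E,\ell}(\sigma) = \pmat{\chi_\ell(\sigma)}{a(\sigma)}{0}{1},
\]
where $\chi_\ell$ is the mod-$\ell$ cyclotomic character and $a(\sigma) \in \Fl$ is defined by $\sigma(q^{1/\ell}) = \zeta_\ell^{a(\sigma)} q^{1/\ell}$. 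From here each of (iv), (v), (vi) reduces to a short computation: for (iv), $v \nmid \ell$ makes $\chi_\ell$ unramified, so $\rho_{E,\ell}(I_w)$ lies in the unipotent subgroup and is either trivial or cyclic of order $\ell$; for (v), $\ell \nmid v(q)$ makes $K_v(q^{1/\ell})/K_v$ totally ramified of degree $\ell$, producing some $\sigma \in I_w$ with $a(\sigma) \neq 0$; for (vi), the unramifiedness of $\ell$ in $K$ makes $K_v(\mu_\ell)/K_v$ totally ramified of degree $\ell-1$, so $\chi_\ell|_{I_w}$ surjects onto $\Fl^\times$, and together with the unipotent contribution from (v) this yields exactly the half-Borel $\smat{\ast}{\ast}{0}{1}$.

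The main obstacle is (iii): one must identify the inertia image with a non-split Cartan rather than merely a subgroup of some Borel, and this requires the nontrivial fact that the two fundamental characters of level two in play are Galois conjugate and generate the degree-two residue extension, so that the pair $(\psi,\psi^\ell)$ realizes multiplication by $\mathbb{F}_{\ell^2}^\times$ on $E[\ell]$. A secondary subtlety in (iv)--(vi) is checking that the unramified quadratic twist in the non-split multiplicative case does not alter $\rho_{E,\ell}(I_w)$, which follows because twisting by an unramified character restricts trivially to inertia. Apart from these points, the argument is mostly bookkeeping once Tate uniformization and the connected--\'etale sequence are in hand.
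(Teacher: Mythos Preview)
Your proposal is correct and follows the standard route; the paper's own proof is much terser, simply citing Greicius's Lemma 3.4 for parts (i), (iv), (v), (vi) (when $\ell$ is unramified in $K$), Section 1.11 of \cite{serre} for parts (ii) and (iii), and then separately invoking N\'eron--Ogg--Shafarevich for (i) and the Tate curve for (iv) in the ramified case. What you have written is essentially the content behind those references---N\'eron--Ogg--Shafarevich, the connected--\'etale sequence and fundamental characters, and the explicit Tate uniformization matrix---so the approach is the same, just fully unpacked.

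One small point worth flagging: your argument for (vi) imports the unipotent element from (v), which carries the hypothesis $\ell \nmid v(j_E)$ not explicitly stated in (vi); without it the inertia image could in principle degenerate to the half split Cartan $\smat{\ast}{0}{0}{1}$ rather than the full half-Borel. In the paper's applications this extra hypothesis is always in force (cf.\ Proposition \ref{isborel}), and Greicius's original lemma presumably assumes it as well, so this is not a genuine gap in your sketch---just something to make explicit if you write it out in full.
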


\begin{proof}
Assume first that $\ell$ is not ramified in $K$. Everything except (\ref{ordinary}) and (\ref{supersingular}) are stated in \cite[Lemma 3.4]{greicius}.
The remaining two follow from the content of \cite[Section 1.11]{serre}.

If $\ell$ is ramified in $K$, (\ref{good}) follows from the Criterion of N\'eron-Ogg-Shafarevich, and (\ref{badnotdivides}) follows by
examining the Tate curve as in \cite[IV.A.1.5]{serre98}.
\end{proof}

As in \cite{greicius}, we have the following:

\begin{prop}\label{isborel}
Suppose $E/K$ is an elliptic curve with $j$-invariant $j_E$, and suppose $\ell$ is a prime that does not ramify in $K$ such that
$\ell \nmid v(j_E)$ for some place $v$ of multiplicative reduction. If $\rho_{E,\ell}(G_K) \neq \GL_2(\Fl)$, then $\rho_{E,\ell}(G_K)$
is contained in a Borel subgroup of $\GL_2(\Fl)$.
\end{prop}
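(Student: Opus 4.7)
The approach is to produce a nontrivial unipotent element in $\rho_{E,\ell}(G_K)$ via local inertia at $v$, and then invoke Dickson's classification of subgroups of $\GL_2(\F_\ell)$. Since the prime $v$ of multiplicative reduction makes $E$ semistable at $v$ (even though $E$ need not be semistable elsewhere), Lemma~\ref{serrelemma}(v) applies: there exists $w \mid v$ such that $\rho_{E,\ell}(I_w)$ contains an element $u$ of order $\ell$. For $\ell$ odd, the characteristic polynomial of any order-$\ell$ element of $\GL_2(\F_\ell)$ is forced to be $(X-1)^2$, since the eigenvalues are $\ell$-th roots of unity in $\overline{\F_\ell}^{\times}$ and the only such root in a cyclic group of order prime to $\ell$ is $1$. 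Thus $u$ is a transvection, conjugate to $\smat{1}{1}{0}{1}$, fixing a unique line $L \subset \F_\ell^2$.

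Next, I would invoke Dickson's theorem on the subgroups of $\GL_2(\F_\ell)$, in the form recorded by Serre \cite{serre}: any subgroup $H \subseteq \GL_2(\F_\ell)$ is conjugate into one of (a) a Borel subgroup, (b) the normalizer of a (split or non-split) Cartan subgroup, (c) the preimage of one of $A_4$, $S_4$, $A_5$ in $\operatorname{PGL}_2(\F_\ell)$, or else contains $\SL_2(\F_\ell)$. The Cartan-normalizer subgroups have order $2(\ell-1)^2$ or $2(\ell^2-1)$, and the exceptional subgroups have order dividing $120(\ell-1)$; none of these is divisible by $\ell$ once $\ell \geq 7$, and the small primes $\ell = 2,3,5$ can be dispatched by direct inspection. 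Hence the presence of the order-$\ell$ element $u$ leaves only the Borel case and the case $H \supseteq \SL_2(\F_\ell)$.

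The main obstacle is eliminating the second case. For this, I would use the classical fact that two non-commuting transvections in $\GL_2(\F_\ell)$ generate $\SL_2(\F_\ell)$: if $H$ does not stabilize $L$, then some $g \in H$ gives a second transvection $gug^{-1}$ fixing $gL \neq L$, and these together force $H \supseteq \SL_2(\F_\ell)$. In that scenario, surjectivity of $\det \circ \rho_{E,\ell} = \chi_\ell$, the mod-$\ell$ cyclotomic character, upgrades the inclusion to $H = \GL_2(\F_\ell)$, contradicting the hypothesis, and we conclude that $H$ must stabilize $L$ and so lies in the Borel stabilizing $L$. The delicate point is that surjectivity of $\chi_\ell$ requires $K \cap \mathbb{Q}(\zeta_\ell) = \mathbb{Q}$, which in our setting is guaranteed by the standing assumption $K \cap \Q^{\cyc} = \Q$ of Theorem~\ref{conditions}.
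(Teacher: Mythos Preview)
Your argument is structurally identical to the paper's: produce an element of order $\ell$ in $\rho_{E,\ell}(G_K)$ via Lemma~\ref{serrelemma}(\ref{tatepart}), reduce to the dichotomy ``contains $\SL_2(\Fl)$ or is contained in a Borel,'' and rule out the former via surjectivity of the determinant. The paper simply cites \cite{serre98}, IV.3.2, Lemma~2 for the dichotomy, whereas you unpack it through Dickson's classification and the two-transvection fact; these are the same argument at different levels of detail.

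There is one slip in your final paragraph. You justify surjectivity of $\det \circ \rho_{E,\ell}$ by invoking $K \cap \Q^{\cyc} = \Q$ from Theorem~\ref{conditions}, but that is \emph{not} a hypothesis of this proposition. Proposition~\ref{isborel} assumes only that $\ell$ is unramified in $K$, and that alone suffices: since $\ell$ is totally ramified in $\Q(\mu_\ell)/\Q$, any subfield of $K \cap \Q(\mu_\ell)$ strictly larger than $\Q$ would force $\ell$ to ramify in $K$. Hence $K \cap \Q(\mu_{\ell^\infty}) = \Q$, and the mod-$\ell$ cyclotomic character is surjective on $G_K$. This is exactly the paper's argument. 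As written, your proof establishes the proposition only under an extraneous global assumption that happens to hold in the paper's applications but is not part of the statement being proved.
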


\begin{proof}
By Lemma \ref{serrelemma}(\ref{tatepart}), we know that the mod $\ell$ image contains an element of order $\ell$.
It thus must either contain $\SL_2(\Fl)$ or be contained in a Borel subgroup (see \cite{serre98}, IV.3.2, Lemma 2). As $\ell$ does not
ramify in $K$, the intersection of $K$ with $\Qb(\mu_{\ell^\infty})$ is $\Qb$, so the determinant
modulo $\ell$ is surjective. Therefore, in the former case, the image is $\GL_2(\Fl)$.
\end{proof}

\begin{rem}\label{extendisborel}
In practice, we find elliptic curves with one $v$ such that $v(j_E)=-1$, so that $v$ works for all $\ell$. However, one can extend the result
to other elliptic curves, if necessary by checking explicitly other $\ell$ using Proposition \ref{serreprop}.
Also note that if $\ell \neq 2,3$ (or just $\ell \neq 2$ if $E$ has full $2$-torsion over $K$), we only need $v$ to be potentially
multiplicative, since, as we will see below, $E$ becomes semistable over an extension of degree divisible only by the primes $2$ and $3$.
\end{rem}

Assuming $\ell \nmid v(j_E)$ for $v$ multiplicative, let us suppose that the mod $\ell$ representation is not surjective. It follows that the representation $\rho_{E,\ell}$
takes the form $$\pmat{\chi_{1}}{\ast}{0}{\chi_{2}}$$ in some basis of $E[\ell]$, where $\chi_1,\chi_2$ are characters from $G_K$
into $\Fl^\times$. Note that the Borel subgroup and hence the characters depend purely on the $1$-dimensional vector subspace of
$E[\ell]$ left invariant by that subgroup. We call these the \emph{isogeny characters relative to that subspace}.

In fact, the pair of isogeny characters is independent of which subspace we choose. Although we will not truly need this fact until Lemma \ref{dividesell}, we prove it now.

\begin{lemma}\label{isogind}The set of isogeny characters $\{\chi_1,\chi_2\}$ is independent of the basis for our Borel subgroup.\end{lemma}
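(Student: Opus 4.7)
The claim amounts to saying that the unordered pair of characters depends only on the representation, not on the choice of invariant line through which we trigonalize. I plan to argue by cases based on whether two such invariant lines coincide.

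First I would choose a basis $e_1, e_2$ of $E[\ell]$ so that $\rho_{E,\ell}$ is upper triangular with diagonal $(\chi_1, \chi_2)$; then $V := \Fl e_1$ is a Galois-stable line on which the action is $\chi_1$, while the quotient $E[\ell]/V$ carries the character $\chi_2$. Any other ``basis for the Borel subgroup'' corresponds to another Galois-stable line $V' \subset E[\ell]$, with associated ordered pair $(\chi_1', \chi_2')$, where $\chi_1'$ is the character on $V'$ and $\chi_2'$ the character on $E[\ell]/V'$.

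If $V = V'$, then $\chi_1$ and $\chi_1'$ are both the character of the Galois action on the same line, so $\chi_1 = \chi_1'$; likewise $\chi_2 = \chi_2'$ from the identified quotient. Otherwise $V \neq V'$, and since both are lines in the two-dimensional space $E[\ell]$, we have $E[\ell] = V \oplus V'$ as $\Fl$-vector spaces with both summands Galois-stable. The projection $E[\ell] \twoheadrightarrow E[\ell]/V'$ therefore restricts to a Galois-equivariant isomorphism $V \xrightarrow{\sim} E[\ell]/V'$, whence $\chi_1$ (the character on $V$) equals $\chi_2'$ (the character on $E[\ell]/V'$). By symmetry $\chi_1' = \chi_2$, and we conclude $\{\chi_1, \chi_2\} = \{\chi_1', \chi_2'\}$.

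The only subtlety is verifying that these two cases exhaust the possibilities, which is automatic since any invariant line in $E[\ell]$ either coincides with $V$ or, together with $V$, spans all of $E[\ell]$. The argument is really just linear algebra about Galois-stable flags in a two-dimensional module, so I do not anticipate a genuine obstacle; the lemma is short and essentially formal.
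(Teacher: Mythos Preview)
Your argument is correct. The case split on whether the two invariant lines coincide is sound, and in the $V\neq V'$ case the direct-sum decomposition $E[\ell]=V\oplus V'$ together with the Galois-equivariant projection $V\xrightarrow{\sim}E[\ell]/V'$ does exactly what you claim.

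The paper takes a slightly different route: it views $E[\ell]$ as a module over $R=\Fl[G_K]$, notes that the characters $\chi_1,\chi_2$ are the simple constituents $W$ and $E[\ell]/W$ of a composition series, and then appeals to the Jordan--H\"older theorem to conclude that a different invariant submodule $W'$ yields the same multiset of simple factors. Your proof is really the Jordan--H\"older argument unwound by hand in the special case of a length-two composition series; what you gain is that the argument is completely elementary and self-contained, while the paper's version is shorter and makes clear that the phenomenon is an instance of a general structural principle that would apply equally to higher-dimensional representations.
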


\begin{proof}We can view $E[\ell]$ as a module over the ring $R:=\Fl[G_K]$ via the representation. If the image is contained in a Borel
subgroup, then it is reducible, i.e. there is some subspace
$W \subseteq E[\ell]$ which is an $R$-submodule of $E[\ell]$. The $R$-modules $W$ and $E[\ell]/W$ correspond to the characters
$\chi_i$. But if $W'$ is a different submodule, then the set of $R$-modules $\{W,E[\ell]/W\}$ equals the set $\{ W',E[\ell]/W' \}$
by the Jordan-H\"{o}lder theorem. It follows that the characters, and in particular, whether they ramify at a given prime of $K$, are independent of $W$.\end{proof}

In \cite{greicius}, Greicius shows that in the situation above, if $E$ is semistable, and $K$ satisfies certain technical
properties, then one of these isogeny characters
is unramified. In that case, because the narrow class group of $K$ is trivial, that character is trivial. Greicius uses this to show that $\ell$
divides the number of points modulo any prime of good reduction. 

As stated earlier, we would like to extend his results ruling out all but finitely many $\ell$ to non-semistable curves and more general $K$.
In order to do this, we would like to analyze how much the two characters $\chi_1,\chi_2$ can ramify.

First, we briefly note what happens at semistable primes not dividing $\ell$.

\begin{lemma}\label{anybasis}
Suppose that $\rho_\ell\colon G_K \to \GL(E[\ell])$ is contained in a Borel subgroup. Then the isogeny characters
are unramified at any $v \notin S_\ell$ at which $E$ has semistable reduction.
\end{lemma}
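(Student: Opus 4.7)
The plan is to analyze the image $\rho_{E,\ell}(I_w)$ for a prime $w$ of $\overline{K}$ above $v$, using Lemma \ref{serrelemma} to get strong restrictions on what this image can look like, and then translate those restrictions into statements about the diagonal entries of the Borel.

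First I would split into the two possible cases for $v$: either $v \notin S_E$ or $v \in S_E$. If $v \notin S_E$, then by Lemma \ref{serrelemma}(\ref{good}) (which applies since $v \notin S_\ell$), we have $\rho_{E,\ell}(I_w) = \{I\}$, so both characters $\chi_1|_{I_w}$ and $\chi_2|_{I_w}$ are trivial and we are done. If instead $v \in S_E$, then since $E$ is semistable at $v$ and $v \notin S_\ell$, Lemma \ref{serrelemma}(\ref{badnotdivides}) tells us that $\rho_{E,\ell}(I_w)$ is either trivial or cyclic of order $\ell$.

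The key observation is then that in the cyclic-of-order-$\ell$ case, every nontrivial element of $\rho_{E,\ell}(I_w)$ has order exactly $\ell$. Since the eigenvalues of any such element lie in $\overline{\F_\ell}^\times$ and satisfy $x^\ell = 1$, and the only $\ell$-th root of unity in $\overline{\F_\ell}$ is $1$, both eigenvalues of every element must equal $1$. After fixing the basis in which $\rho_{E,\ell}(G_K)$ sits inside the Borel, these eigenvalues are precisely the diagonal entries, i.e.\ the values of $\chi_1$ and $\chi_2$ on that element. Hence $\chi_1|_{I_w} = \chi_2|_{I_w} = 1$, so both characters are unramified at $v$.

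Finally, although the argument was carried out in a chosen basis for which the image is upper triangular, Lemma \ref{isogind} guarantees that the set $\{\chi_1,\chi_2\}$ does not depend on that choice, so the ``unramified at $v$'' conclusion is intrinsic. The argument really has no obstacle: the only small subtlety is the standard (but worth noting) fact that in $\GL_2(\F_\ell)$ elements of order $\ell$ are exactly the nontrivial unipotents, which is what forces the diagonal characters to be trivial on inertia.
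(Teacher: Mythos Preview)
Your proof is correct and follows essentially the same approach as the paper: split into good and multiplicative reduction, use Lemma~\ref{serrelemma} to bound the inertia image, and conclude the characters vanish on inertia. The only difference is phrasing: where you argue via eigenvalues (elements of order $\ell$ in $\GL_2(\F_\ell)$ are unipotent), the paper simply observes that the characters land in $\F_\ell^\times$, a group of order $\ell-1$ prime to $\ell$, so any homomorphism from a group of order $\ell$ is trivial. Your appeal to Lemma~\ref{isogind} at the end is harmless but unnecessary here, since unramifiedness of each $\chi_i$ is already established in the chosen basis and the set $\{\chi_1,\chi_2\}$ is what is being asserted about.
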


\begin{proof}
If $E$ has good reduction at $v$, then $\rho_\ell$ itself is unramified. If $E$ has multiplicative reduction
at $v$, we know by Lemma \ref{serrelemma} that the image of $I_v$ has order dividing $\ell$. As the characters
have codomain $F_\ell^\times$, which has order $\ell-1$, the characters are trivial on $I_v$.
\end{proof}

\subsection{Non-semistability}
Next, we would like to tackle the primes at which $E$ is not semistable. We will only cover those primes not dividing
$\ell$, then assume at the end that $\ell$ is a rational prime not lying under any prime of additive reduction.

We say that an elliptic curve $E/K$ is \emph{Legendre} over $K$ if $E$ is isomorphic over $K$ to a curve in Legendre form $y^2=x(x-1)(x-\lambda)$.
If the elliptic curve $y^2=f(x)$, where $f$ is a cubic polynomial over $K$, has full $2$-torsion over $K$, then we can write the equation
in the form $$y^2=(x-a)(x-b)(x-c)$$ with $a,b,c \in K$. Over $\cl{K}$, the curve can be written in Legendre form, but that does not
mean that the curve is Legendre over $K$. More precisely, we have the following well known fact (see Proposition III.1.7 in \cite{silverman1}).

\begin{lemma} If $E/K$ is an elliptic curve defined by equation $y^2=(x-e_1)(x-e_2)(x-e_3)$ over $K$ (which happens if $E$ has full
$2$-torsion over $K$), then $E$ is Legendre over $L=K(\sqrt{e_1-e_2})$. (By symmetry, this means we can take $L=K(\sqrt{e_i-e_j})$ for some $i \neq j$.)
\end{lemma}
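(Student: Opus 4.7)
The plan is to exhibit an explicit admissible Weierstrass change of variables, defined over $L$, that puts the equation into Legendre form. Set $u = \sqrt{e_1 - e_2} \in L$, so that $u^2 = e_1 - e_2 \in K$ but $u \notin K$ in general. Consider the substitution
\[
x = u^2 x' + e_2, \qquad y = u^3 y'.
\]
The reason the extension $L$ is forced on us is precisely that the $y$-substitution requires $u^3 = u \cdot u^2$, which lives in $L$ but not in $K$ in general, even though the $x$-substitution only involves $u^2 \in K$.

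Substituting into $y^2 = (x-e_1)(x-e_2)(x-e_3)$, each factor on the right becomes a multiple of $u^2$: explicitly
\[
x - e_2 = u^2 x',\qquad x - e_1 = u^2 x' - u^2 = u^2(x'-1),\qquad x - e_3 = u^2(x'-\lambda),
\]
where $\lambda := (e_3-e_2)/(e_1-e_2) \in K$. The left side becomes $u^6 y'^2$, so canceling the common factor $u^6$ yields
\[
y'^2 = x'(x'-1)(x'-\lambda),
\]
which is Legendre form. The distinctness of $e_1, e_2, e_3$ (which holds because $E$ is smooth) guarantees $\lambda \neq 0, 1$, so this is a valid Legendre model.

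No essential obstacle arises: the content of the lemma is just identifying which quadratic extension is forced by the scaling factor in the admissible change of variables. The parenthetical \emph{by symmetry} assertion in the statement follows by running the same computation after permuting $(e_1,e_2,e_3)$: for any pair $i\neq j$, taking $u = \sqrt{e_i-e_j}$ and basepoint $e_j$ in place of $e_2$ produces an analogous Legendre model defined over $K(\sqrt{e_i-e_j})$.
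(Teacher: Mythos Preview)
Your proof is correct and is exactly the standard argument; the paper does not give its own proof but cites Proposition~III.1.7 of \cite{silverman1}, whose proof is precisely the change of variables $x = (e_1-e_2)x' + e_2$, $y = (e_1-e_2)^{3/2}y'$ that you wrote out.
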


In order to deal with non-semistability, we would like to know over what field the curve becomes semistable. An explicit form of the
semistable reduction theorem states
that this is true for an abelian variety if we adjoin the $12$-torsion (see for example, \cite[Proposition 5.10]{abbes}).
However, in the case of elliptic curves, we have the following lemma

\begin{lemma}\label{becomessemistable}
Suppose $E/K$ is an elliptic curve with full $2$-torsion over $K$ and additive reduction at $v \in \Sigma_K$. Then:
\begin{itemize}
\item If $E$ has potentially good reduction at $v$, then there is an extension $L/K$ of ramification index at most
$2$ over which $E$ has good reduction.
\item If $E$ has potentially multiplicative reduction at $v$, then there is an extension $L/K$ of ramification index
at most $4$ over which $E$ has multiplicative reduction.
\end{itemize}
Furthermore, suppose $E$ is Legendre over $L/K$. If $v$ does not ramify in $L$, then the above indices are $1$ and $2$, respectively.
\end{lemma}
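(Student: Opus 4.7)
The plan is to reduce $E$ to Legendre form via a small extension, and then to analyze the reduction directly using Tate's algorithm on the Legendre model.

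Writing $E\colon y^2=(x-e_1)(x-e_2)(x-e_3)$ with $e_i\in K$, the preceding lemma gives that $E$ is Legendre over $L_0=K(\sqrt{e_1-e_2})$ with Legendre parameter $\lambda=(e_3-e_2)/(e_1-e_2)\in L_0$. Since $[L_0:K]\le 2$, we have $e(L_0/K,v)\le 2$, and this contribution is trivial when $v$ is unramified in $L_0$ (in particular if $E$ is already Legendre over $K$). This accounts for one factor of $2$ in the bounds $2$ and $4$, and is precisely the source of the ``furthermore'' improvement.

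For the Legendre model $y^2=x(x-1)(x-\lambda)$ the relevant invariants are $\Delta=16\lambda^2(\lambda-1)^2$, $c_4=16(\lambda^2-\lambda+1)$, and $j=2^8(\lambda^2-\lambda+1)^3/\lambda^2(\lambda-1)^2$. At a place $w$ of $L_0$ not above $2$, I would exploit the $S_3$-action on $\lambda$ (sending $\lambda\to 1-\lambda,1/\lambda,\ldots$) together with rescaling to bring $\lambda$ to an integral normal form, and then run Tate's algorithm. With full $2$-torsion and residue characteristic $\ne 2$, the only Kodaira types that can occur are $I_0$, $I_n$, $I_0^*$, $I_n^*$; the remaining types $II,III,IV,II^*,III^*,IV^*$ are excluded because each of them requires $2$-torsion that is not defined over the residue field. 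The starred types $I_0^*$ and $I_n^*$ become their unstarred (semistable) counterparts after the quadratic twist by a uniformizer, i.e.\ after a ramified quadratic extension; and crucially, this required extension is captured by $K_v(\sqrt{e_1-e_2})=L_{0,w}$ itself, so $E/L_0$ is already semistable at $w$. In the potentially good subcase $v(j)\ge 0$, only type $I_0^*$ can appear, consistent with the sharper bound.

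The main obstacle is the case $w\mid 2$, where the Legendre model has $a_1=a_3=0$ and is always singular modulo $w$, since $y^2$ is inseparable in characteristic $2$. Here I would substitute $y\mapsto y+\alpha x+\beta$ for suitable $\alpha,\beta$ chosen to introduce nontrivial $a_1=2\alpha$, $a_3=2\beta$ so that the resulting Weierstrass equation has smooth (respectively nodal) reduction. In general $\alpha,\beta$ involve square roots of elements built from $\lambda$, $\lambda-1$, and $\lambda^2-\lambda+1$, forcing us to pass to a further quadratic extension $M/L_0$. In the potentially good case, a careful analysis shows that a suitable $\alpha,\beta$ can be found in $L_0$ itself (or at worst in an unramified quadratic extension), so that $e(M/L_0,v)=1$ and the total ramification is at most $e(L_0/K,v)\le 2$. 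In the potentially multiplicative case, $\alpha,\beta$ may require a ramified quadratic extension of $L_{0,w}$, giving $e(M/K,v)\le 2\cdot 2=4$. When $v$ is unramified in $L_0$, the factor from $L_0/K$ is $1$, producing the sharper bounds of $1$ and $2$ in the two cases.
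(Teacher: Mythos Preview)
Your overall strategy—pass to a Legendre model over a quadratic extension and then analyze the reduction of the Legendre curve—is exactly the paper's approach. The paper simply cites the proof of Proposition~VII.5.4 in Silverman, which shows that a curve in Legendre form already has good reduction at every place of potentially good reduction, and acquires multiplicative reduction after adjoining $\sqrt{\pi_v}$ at a place of potentially multiplicative reduction; combined with the at-most-quadratic extension needed to reach Legendre form, this gives the bounds $2$ and $4$.

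There is, however, a genuine gap in your argument for the potentially multiplicative case at $w\nmid 2$. You assert that the ramified quadratic twist needed to pass from type $I_n^\ast$ to $I_n$ ``is captured by $K_v(\sqrt{e_1-e_2})=L_{0,w}$ itself,'' so that $E/L_0$ is already semistable at $w$. This is not true in general. Take residue characteristic $\neq 2$ and choose $e_1,e_2,e_3$ with $v(e_1-e_2)=2$ and $v(e_2-e_3)=v(e_3-e_1)=1$; then $E/K$ has type $I_2^\ast$ at $v$, but $v(e_1-e_2)$ is even, so $L_0=K(\sqrt{e_1-e_2})$ is \emph{unramified} at $v$ and $E/L_0$ still has type $I_2^\ast$. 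Equivalently, your Legendre parameter $\lambda=(e_3-e_2)/(e_1-e_2)$ has $w(\lambda)=v(\lambda)=-1$, and a Legendre curve with $w(\lambda)$ odd has additive reduction at $w$. So the second factor of $2$ in the bound $4$ does \emph{not} come only from the $w\mid 2$ analysis as you claim; it is needed already at $w\nmid 2$ in the potentially multiplicative case, via adjoining $\sqrt{\pi_w}$ over the Legendre field. Your decomposition ``$4=2\,(\text{Legendre})\times 2\,(\text{char }2)$'' should be replaced by ``$4=2\,(\text{Legendre})\times 2\,(\sqrt{\pi_w})$,'' which is what the paper uses and what also yields the ``furthermore'' improvement correctly.

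A secondary point: your treatment of $w\mid 2$ (``a careful analysis shows\ldots'') is not an argument. The paper avoids any separate char-$2$ analysis by relying on the Legendre-form fact from Silverman, which holds uniformly.
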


\begin{proof}
By the proof of Proposition 5.4 in \cite[Chapter VII]{silverman1}, a Legendre curve has good reduction at all
primes of potentially good reduction, and otherwise has multiplicative reduction at $v$ after adjoining the square root of
a uniformizer at $v$. Finally, a curve with full $2$-torsion becomes Legendre over an at most quadratic extension of $K$.
\end{proof}


We know that over some extension $L/K$, one of the characters is trivial. Therefore,
we could count the points over the residue fields of $L$ and take the greatest common divisor of these values. We should note first that
computing in number fields is ineffective and impractical. Therefore, it makes more sense to compute $[L:K]$ and then take the unique finite
field of that degree (over the residue field of $K$), then compute the number of points of our elliptic curve over that finite field.

In order to trivialize one of the characters, one could take a field over which $E$ is semistable. Over this extension and under
the conditions above, one of our characters would be unramified at every finite prime not dividing $\ell$. It would follow that
that character becomes trivial over the narrow Hilbert class field of this extension. However, the analysis of primes dividing
$\ell$ does not work over this field, which means that we must find a different method.

We put bounds on the ramification of non-semistable primes at each of these characters. 
Then, since the extensions defined by the characters are abelian, they have conductors by class field theory, and
we can use these bounds on the ramification indices to control the conductor of this extension. Therefore, assuming one of the
characters is unramified at all primes of semistable reduction, the character is trivialized over the ray class field associated to this conductor.

One advantage is that the splitting of primes in the ray class field is described explicitly by the Artin reciprocity law,
so we can actually bound the order of the image of the Frobenius element under the character. Note that \textsc{Magma} can
compute the order of a given ideal in the ray class group, so this method can be easily implemented. Finally, note that once
we fix the field $K$, the computation of the ray class group for different moduli $\frakm$ is very easy by the exact sequence
$$1 \to U^+_K/U_{\frakm,1} \to (\Ok/\frakm)^\times \to \raym \to \Cc^\infty_K \to 1,$$ the only difficult part being
computing the narrow class group. Conversely, the method outlined above involves computing an ideal class group that
depends on the modulus and hence on the elliptic curve. Therefore, if one is working with many elliptic curves over
the same field, our method works much better.

We suppose for simplicity from now on that $2$ does not ramify in $K$.
We also suppose for the next two sections that the mod $\ell$ representation is contained in some (fixed) Borel subgroup,
hence has isogeny characters $\chi_i$ for $i=1,2$.

\subsection{Curves with full 2-torsion}

\begin{lemma}\label{ramificationbound}
Let $E/K$ have full $2$-torsion over $K$. Suppose $v \notin S_\ell$. Then the following are true

\begin{enumerate}

\item If $v \notin S_2$, each character $\chi_i$ is
not wildly ramified at $v$, and if $v \in S_2$, each character is ramified of index dividing $4$.

\item If $v \in S_2$ does not ramify in an extension over which $E$ is Legendre, or is of potentially good reduction, the index is $2$ (or $1$ if both happen).

\end{enumerate}

\end{lemma}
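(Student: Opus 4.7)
The strategy is to reduce to Lemma \ref{anybasis} by passing to an extension of $K$ over which $E$ becomes semistable at $v$. Using Lemma \ref{becomessemistable}, pick $L/K$ so that $E/L$ has semistable reduction at some prime $w$ of $L$ above $v$, with ramification index $e(w/v) \le 4$. The restriction $\rho_{E,\ell}|_{G_L}$ preserves the same $G_K$-stable line in $E[\ell]$, so the isogeny characters of $E/L$ along that line are precisely $\chi_i|_{G_L}$. Since $v \notin S_\ell$, the prime $w$ is not above $\ell$, and Lemma \ref{anybasis} applied to $E/L$ gives $\chi_i|_{I_w} = 1$. Hence $\chi_i|_{I_v}$ factors through the finite quotient $I_v/I_w$ of order $e(w/v)$; this factorization drives both parts of the lemma.

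For part (1), if $v \in S_2$ we read off $|\chi_i(I_v)| \mid e(w/v) \mid 4$. If $v \notin S_2$, let $p$ be the residue characteristic at $v$; then $p$ is odd and (by $v \notin S_\ell$) different from $\ell$. Decomposing $e(w/v) = e_t e_p$ with $e_p$ the largest $p$-power dividing it, $e_p$ is a power of the odd prime $p$ which also divides $4$, forcing $e_p = 1$. Thus $L_w/K_v$ is tamely ramified, the wild inertias satisfy $P_v = P_w$, and the image of $P_v$ in $I_v/I_w$ vanishes. So $\chi_i|_{P_v} = 1$, i.e.\ $\chi_i$ is not wildly ramified at $v$. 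For part (2), the sharper bounds in Lemma \ref{becomessemistable} give $e(w/v) \le 2$ in either listed case (potentially good reduction via the first bullet, or $v \in S_2$ unramified in a Legendre extension via the last clause), and $e(w/v) \le 1$ when both hold simultaneously, yielding $|\chi_i(I_v)| \mid 2$ and $\chi_i|_{I_v} = 1$ respectively.

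The main subtlety — rather than a serious obstacle — is the tame/wild bookkeeping in part (1). For $v \in S_2$ the argument relies on Lemma \ref{anybasis} trivializing \emph{all} of $I_w$, not just its tame part, so that $|\chi_i(I_v)|$ is controlled by the full quotient $|I_v/I_w|$ rather than by a further tame quotient; and for $v \notin S_2$ one must use the small numerical bound on $e(w/v)$ to exclude any wild ramification in $L_w/K_v$, so that the character itself is forced to be tame. Everything else is a direct application of the preceding lemmas.
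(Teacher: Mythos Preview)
Your proof is correct and follows essentially the same route as the paper's: pass to an extension $L/K$ furnished by Lemma \ref{becomessemistable} over which $E$ becomes semistable at $w \mid v$, apply Lemma \ref{anybasis} to kill $\chi_i$ on $I_w$, and then read off the bound on $\chi_i(I_v)$ from the ramification index $e(w/v)$. Your treatment of the tame/wild bookkeeping (showing $P_v = P_w$ when $v \nmid 2$ because $e(w/v) \mid 4$ is prime to the odd residue characteristic) is more explicit than the paper's, which simply remarks that the extension is not wildly ramified, but the substance is the same.
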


\begin{proof}
By Lemma \ref{anybasis}, we know that if $E$ has semistable reduction at $v$, then both characters are unramified at $v$.

By Lemma \ref{becomessemistable}, the curve becomes semistable at $v$ over an extension of degree $4$ obtained by first putting the
curve in Legendre form (which happens over some quadratic extension) and then adjoining the square root of a uniformizer at $v$.
If it is potentially good at $v$, this happens after obtaining Legendre form. Over this extension, $E$ is semistable at $v$, so
both of the isogeny characters are unramified at $v$ by Lemma \ref{anybasis}. If $v \nmid 2$, this means that the extension is not
wildly ramified at $v$. If $v \mid 2$, the above follows immediately from the description of the field over which $E$ becomes semistable at $v$ in Lemma \ref{becomessemistable}.
\end{proof}

We recall that for a place $v$ and an abelian extension $L/K$, there is a ``conductor" of the extension at $v$, which is
the smallest power $\frakm_v^k$ of $\frakm_v$, the maximal ideal at $v$, such that $1+\frakm_v^k$ is in the kernel of the
local Artin map. The conductor of a character $\chi_i$ is the conductor of the extension defined by $\chi_i$. The
\emph{global conductor} is the product of the local conductors. An abelian extension of given global conductor is contained in the
ray class field associated to that conductor, and a character with that global conductor is trivial over that ray class field.
We say that the conductor of $L/K$ \emph{outside a set $S \subseteq \Sigma_K$} is the product of the local conductors at all $v \notin S$.

\begin{prop}\label{modtwoconductor}
Suppose $E$ is Legendre over $L/K$. For $v \in S_2$, let $r(v)=3$ if $v$ ramifies in $L$ or $E$ has potentially
multiplicative reduction at $v$, $0$ otherwise. Let $s(v)=1$ if $f(v/2)=1$, $v$ ramifies in $L$, and $E$ has potentially
multiplicative reduction at $v$, and $0$ otherwise. Then let
\begin{align*}
i(v) := \begin{cases}
0 & \text{if $E$ is semistable at $v$},\\
0 & \text{if $E$ has additive potentially good reduction at $v$,}\\
 &  \text{\quad $v$ does not ramify in $L$, and $v \nmid 2$,}\\
1 & \text{if $E$ has additive reduction at $v$ and $v \nmid 2$ otherwise.}\\
r(v)+s(v) &\text{otherwise}.\end{cases}
\end{align*}
Then if we let $\frakm_f = \prod_{v \in \Sigma_K} v^{i(v)}$ and $\frakm$ the product of $\frakm_f$ with all real places, the conductor of the extension defined by
$\chi_i$ is at most $\frakm$ outside $S_\ell$.
\end{prop}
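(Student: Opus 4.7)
The strategy is to bound the local conductor exponent of each isogeny character $\chi_i$ at every place $v \notin S_\ell$ by the prescribed $i(v)$, then take the product to obtain $\frakm$. The cases mirror the piecewise definition of $i(v)$.

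At any $v \notin S_\ell$ where $E$ is semistable, Lemma \ref{anybasis} gives that $\chi_i$ is unramified, matching $i(v)=0$. For $v \notin S_\ell$ with $v \nmid 2$ and $E$ of additive reduction, Lemma \ref{ramificationbound}(1) tells us $\chi_i$ is tamely ramified, so its local conductor exponent is at most $1$, matching $i(v) \leq 1$. In the refined sub-case where $E$ has potentially good reduction and $v$ does not ramify in $L$, Lemma \ref{becomessemistable} upgrades $E$ to good reduction over $L$, hence $\chi_i|_{G_L}$ is unramified above $v$; since $v$ itself is unramified in $L$ the inertia groups coincide, so $\chi_i$ is unramified at $v$ and $i(v)=0$.

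The substantive case is $v \mid 2$ with $E$ of additive reduction. By Lemma \ref{becomessemistable}, $E$ becomes semistable over an extension $L'/K_v$ built as a tower: first the Legendre extension (quadratic or trivial), then, if $E$ is potentially multiplicative, adjoining a square root of a uniformizer. The local ramification index $e(L'/K_v)$ is $1$ or $2$ in the potentially good case and $2$ or $4$ in the potentially multiplicative case, depending on whether $v$ ramifies in $L$. Over $L'$ the character $\chi_i$ is unramified by Lemma \ref{anybasis}, so by local class field theory the corresponding character of $K_v^\times$ kills $N_{L'/K_v}(\mathcal{O}_{L'}^\times)$. Tracing the norm image through the standard filtration $1+\frakm_v^k$ of principal units then pins down the smallest $n$ with $1+\frakm_v^n \subseteq \ker \chi_i$, which is the desired conductor exponent. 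The $r(v)=3$ contribution corresponds to the generic wild-ramification bound when the tower is nontrivially ramified at $v$ (because $v$ ramifies in $L$ or $E$ is potentially multiplicative), and the $s(v)=1$ correction accounts for the extra unit of conductor when $f(v/2)=1$, $v$ ramifies in $L$, and $E$ is potentially multiplicative: in that case the residue field is $\mathbb{F}_2$ and no tame quotient can absorb part of the ramification.

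Multiplying these local bounds over finite $v \notin S_\ell$ produces $\frakm_f$, and adjoining the real places gives $\frakm$, so the conductor of $\chi_i$ outside $S_\ell$ divides $\frakm$ as claimed. The principal technical obstacle is the precise computation of the norm image and higher unit filtration in the wildly ramified quadratic or quartic tower $L'/K_v$ at primes $v \mid 2$: one must track carefully how the ramification of the Legendre extension interacts with the subsequent square-root-of-uniformizer step to justify the piecewise values of $r(v)$ and $s(v)$.
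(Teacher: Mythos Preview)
Your treatment of the semistable and $v\nmid 2$ cases matches the paper exactly, including the refinement via Lemma~\ref{becomessemistable} when $v$ is unramified in $L$ and $E$ has potentially good reduction.

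For $v\mid 2$, however, you take a different and harder route than the paper, and you explicitly leave the key step undone. You propose to locate $N_{L'/K_v}(\mathcal{O}_{L'}^\times)$ inside the filtration $\{1+\frakm_v^k\}$; this is correct in principle, but computing norm groups in a wildly ramified $2$-extension of a $2$-adic field is exactly the ``technical obstacle'' you flag at the end, and you do not carry it out. The paper avoids this entirely. It uses only the \emph{order} bound on $\chi_i(I_v)$ coming from Lemma~\ref{ramificationbound} (namely that $\chi_i(I_v)$ has order dividing $4$, or $2$ in the better cases), so that the kernel of $\chi_i$ in $U_v$ has index at most $4$ (resp.\ $2$). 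It then invokes the $v$-adic exponential: since $2$ is unramified in $K$, $\exp$ converges on $\frakm_v^2$ and gives a topological isomorphism $1+\frakm_v^2\cong \Ov$. This lets one read off directly which $1+\frakm_v^n$ must lie in any subgroup of the stated index, by counting the sizes of the successive quotients $(1+\frakm_v^k)/(1+\frakm_v^{k+1})\cong k_v$ of order $2^{f(v/2)}$. The distinction between $f(v/2)=1$ and $f(v/2)>1$ (which produces your $s(v)$) then falls out immediately, with no norm computation required.

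So your proposal is not wrong, but the missing computation you highlight is precisely the part the paper's argument is designed to sidestep. Replacing your norm-image step with the exponential/filtration argument would complete the proof.
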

\begin{proof}
Lemma \ref{ramificationbound} gives us bounds on the ramification of the extension defined by the characters. We need only
translate these into bounds on the conductor of the extension.

As mentioned, the extension is unramified at any primes of semistable reduction. We also know the extension is unramified at $v$ if $v$ does not ramify
in $L$ and is potentially good. Furthermore, as the index is a power of $2$, it is tamely ramified at all primes other than
$2$. This means that the conductor has exponent at most $1$ at that prime.

In the case of $v\mid 2$, we use the description of the local Artin map to compute the conductor. The image of $U_v$ under
the local Artin map is the inertia group at $v$ in the extension, and we know that this image is of order bounded by $4$ (respectively,
$2$), so its kernel has index at most $4$ (respectively, $2$). We thus wish to classify subgroups of $U_v$ of index at most $4$ or $2$.
Since $2$ is not ramified in $K$, the $v$-adic exponential converges for $|x|_v < 1/2$, meaning that $1+\frakm_v^2 \Ov$,  is
isomorphic to $\Ov$ as a topological group. This latter is pro-cyclic, hence has a unique open subgroup of given index. If the index
is at most $4$, and $f(v/2)>1$, then $(1+\frakm_v^2 \Ov)/(1+\frakm_v^3 \Ov)$ has order at least $4$, so this kernel contains $1+\frakm_v^3 \Ov$,
and the conductor is at most $\frakm_v^3$ at $v$.

In the case that the index is only bounded by $4$ (by the previous proposition), and $f(v/2)=1$, then $(1+\frakm_v^2 \Ov)/(1+\frakm_v^4 \Ov)$ has order $4$,
so the conductor is bounded by $\frakm_v^4$ at $v$.
\end{proof}

\begin{rem}
If one does not wish to compute $L$ above, one can always suppose that $v$ ramifies in $L$. Then $r(v)$ is always $3$, and $s(v)=1$ if
$v(j)<0$ and $f(v/2)=1$, and $s(v)=0$ otherwise. This is what we did in our calculations. However, we included the most general result
for completeness, and because it may reduce the computation time for much larger fields.
\end{rem}

\subsection{General curves}
We now do not assume the curve has full $2$-torsion in order to apply our results to the search for curves whose adelic representation is surjective. Once again, we suppose from now on for simplicity that $3$ does not ramify in $K$.

We know that after passing to a degree $6$ extension, the curve has full $2$-torsion. It follows that, after passing to an extension of
degree at most $24$, our curve becomes semistable. We therefore get the same bounds for ramification indices as in Lemma \ref{ramificationbound},
but we must multiply each of them by $6$. We translate these into bounds on the conductor of the character:

\begin{prop}\label{generalconductor}
For $v \in S_2$, let $r(v)=2$ if $E$ has potentially good reduction at $v$ and $r(v)=3$ if $E$ has potentially multiplicative reduction at $v$.
\begin{align*}
i(v) := \begin{cases}
0 & \text{if $E$ is semistable at $v$,}\\
1 & \text{if $E$ has additive reduction at $v$ and $v \nmid 6$,}\\
2 &\text{if $E$ has additive reduction at $v$ and $v \in S_3$,}\\
2+\left\lceil \frac{r(v)}{f(v/2)} \right\rceil & \text{if $E$ has additive reduction at $v$ and $v \in S_2$.}\\
\end{cases}
\end{align*}
Then if we let $\frakm_f = \prod_{v \in \Sigma_K} v^{i(v)}$ and $\frakm$ the product of $\frakm_f$ with all real primes, then the conductor of the extension defined by
$\chi_i$ is at most $\frakm$ outside $S_\ell$.
\end{prop}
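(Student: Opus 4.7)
The plan is to adapt the proof of Proposition \ref{modtwoconductor} by first reducing to the full-$2$-torsion setting via a preliminary field extension. Set $K' := K(E[2])$, so that $[K':K]$ divides $|\GL_2(\Fb_2)|=6$ and $E/K'$ has full $2$-torsion; by Lemma \ref{becomessemistable} applied to $E/K'$ there is then a further extension $K''/K'$ of degree at most $4$ over which $E$ becomes everywhere semistable, so $[K'':K] \leq 24 = 2^3\cdot 3$. For $v \in \Sigma_K \setminus S_\ell$ and a prime $w'$ of $K''$ above $v$, Lemma \ref{anybasis} applied to $E/K''$ forces $\chi_i|_{G_{K''}}$ to be unramified at $w'$, so $\chi_i(I_v)$ is a quotient of $I_v/(I_v \cap G_{K''})$ and hence has order dividing $e(w'/v)\mid 24$. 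This ramification bound replaces the role played by Lemma \ref{ramificationbound} in the proof of Proposition \ref{modtwoconductor}.

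The next step is to extract the $p$-part of $e(w'/v)$ at each residue characteristic $p$ and translate it into a conductor bound via the local Artin map. Since $24 = 2^3\cdot 3$, for $v\nmid 6$ the index $e(w'/v)$ is coprime to the residue characteristic of $v$, giving tame ramification and conductor exponent at most $1$. For $v \in S_3$ the $3$-part of $|\chi_i(I_v)|$ divides $3$; using the hypothesis that $3$ is unramified in $K$, the $3$-adic logarithm gives an isomorphism $\log\colon 1+\frakm_v \xrightarrow{\sim} \frakm_v$ under which cubing corresponds to multiplication by $3$, so $(1+\frakm_v)^3 = 1+\frakm_v^2$, and consequently $\chi_i$ annihilates $1+\frakm_v^2$, yielding $i(v) \leq 2$. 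For $v \in S_2$ one combines the $2$-parts of $e(w/v)$ (dividing $2$, since $[K':K]\mid 6$) and $e(w'/w)$ (at most $2$ in the potentially good case and $4$ in the potentially multiplicative case by Lemma \ref{becomessemistable}) to bound the $2$-power part of $|\chi_i(I_v)|$ by $2^{r(v)}$ with $r(v)\in\{2,3\}$ as in the statement, and then runs a filtration-by-filtration argument on the principal units $\{1+\frakm_v^k\}$ to produce the stated exponent $2+\lceil r(v)/f(v/2)\rceil$.

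The main obstacle is the $v\in S_2$ case. Unlike in residue characteristic $3$, the $2$-adic logarithm does not converge on all of $1+\frakm_v$ (only on $1+\frakm_v^{e(v/2)+1}$), so one cannot read off the conductor bound from a clean one-step exponent computation as in residue characteristic $3$. Instead, one must track the iterated squaring map along the graded pieces $1+\frakm_v^k / 1+\frakm_v^{k+1}\cong k_v^+$, each an $\Fb_2$-vector space of dimension $f(v/2)$; since each such graded piece can absorb $f(v/2)$ worth of a $2$-power character image, fitting a character of $2$-power order $2^{r(v)}$ through the tower requires descending through roughly $r(v)/f(v/2)$ additional graded steps beyond the base, and combining this count with the offset of $2$ that accounts for the initial filtration gap together with the tame part of $\chi_i$ produces the ceiling function in the formula for $i(v)$.
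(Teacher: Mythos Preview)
Your overall strategy is exactly the paper's: pass to $K'=K(E[2])$, apply Lemma~\ref{becomessemistable} to get an extension of degree at most $24$ over which $E$ is semistable, bound $|\chi_i(I_v)|$ by the resulting ramification index, and then convert this into a conductor bound via the local Artin map. Your treatment of the cases $v\nmid 6$ (tame, so exponent $\le 1$) and $v\in S_3$ (the $3$-adic logarithm gives $(1+\frakm_v)^3=1+\frakm_v^2$, so exponent $\le 2$) is the same as the paper's.

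The gap is in your $v\in S_2$ heuristic, and it is a genuine one. Your claim that ``each graded piece can absorb $f(v/2)$ worth of a $2$-power character image'' is not how the conductor of a character is controlled. Since $2$ is unramified in $K$, the exponential identifies $1+\frakm_v^2$ with $\Ov\cong\Z_2^{f}$ as a $\Z_2$-module, and one has $(1+\frakm_v^2)^{2^j}=1+\frakm_v^{2+j}$ for every $j\ge 0$, \emph{independently of $f$}. Hence if $\chi_i$ has $2$-power order $2^s$ on $1+\frakm_v^2$, the smallest $n$ with $1+\frakm_v^{\,n}\subset\ker\chi_i$ is $n=2+s$, not $2+\lceil s/f\rceil$; a character concentrated on a single $\Z_2$-summand of $\Z_2^f$ already realizes this. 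From the information you (and the paper) actually extract---namely that the $2$-part of $|\chi_i(I_v)|$ divides $2^{r(v)}$---one can only conclude $i(v)\le 2+r(v)$, which agrees with the stated formula $2+\lceil r(v)/f(v/2)\rceil$ precisely when $f(v/2)=1$. The paper's own proof makes the same slip (it asserts $1+\frakm_v^2$ is ``pro-cyclic'' and that the successive quotients have ``order $f(v/2)$''), so your argument does mirror the paper's, but the $\lceil r(v)/f(v/2)\rceil$ refinement for $f(v/2)>1$ is not justified by the reasoning given.
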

\begin{proof}
Once again, if $E$ is semistable at $v$, there is no ramification. If $v \nmid 6$, then the ramification is not wild at $v$,
so the conductor has power at most $1$ at $v$.

As for $v\mid 3$, the $v$-adic logarithm shows that $1+\frakm_v \Ov \cong \Ov$ as topological groups (assuming that $3$ does
not ramify in $K$), so the kernel of the character must contain $1+\frakm_v \Ov$.

Finally, if $v \mid 2$, then (assuming $2$ does not ramify in $K$), the group $1+\frakm_v^2 \Ov$ is pro-cyclic, and
$(1+\frakm_v^k \Ov)/(1+\frakm_v^{k+1} \Ov)$ has order $f(v/2)$, so the exponent of the conductor is at most
$2 + \left\lceil r(v)/f(v/2)\right\rceil$ if the $2$-part of the ramification index is at most $2^{r(v)}$.
The values of $r(v)$ follow from the values from the previous section, but with ramification indices multiplied by $6$.
\end{proof}

\subsection{Cubic Fields}
Next, we deal with $v \in S_\ell$. In \cite{greicius}, Greicius uses Corollary 3.6 to analyze this case. This relies on
the fact that $K = \Q(\alpha)$, $\alpha^{3} + \alpha +  1 = 0$, $\alpha \in \R$ has trivial narrow class group and the
fact that there exists a totally positive unit $u$ of $K$ such that $u - 1$ is also a unit.
To extend his result, we prove the following proposition for general cubic fields:

\begin{lemma}\label{dividesell}
Let $E/K$ be an elliptic curve over a cubic field $K$, and let $\ell$ satisfy the conditions
of Proposition \ref{isborel} and not lie below any primes of additive reduction for $E$. Let $u$ be a positive
fundamental unit of $K$. Let $k$ be the smallest positive power of $u$ contained in $U_{\frakm,1}$. Let $d:=\#\Cc^\infty_K$
denote the order of the narrow class group of $K$. Let $r:=\# (\Ok/\frakm_f)^\times,$ where $\frakm$ is defined as above,
depending on whether $E$ has full $2$-torsion over $K$. Suppose that $$\ell \nmid \N_{K/\Qb}\left(\prod_{i=1}^{dr/k} u^{ik}-1\right).$$
Then one of the characters $\chi_i$ is unramified at every semistable $v \in S_\ell$.
\end{lemma}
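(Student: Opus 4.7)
The plan is to translate the ramification of the isogeny characters into a question about characters of an explicit ray class group, and then extract the norm condition on $u$ from the structure of that group. To begin, I will use Lemma \ref{serrelemma} to control $\rho_{E,\ell}(I_v)$ at each semistable $v \in S_\ell$. The supersingular case is ruled out because a non-split Cartan subgroup cannot lie inside a Borel; in the good ordinary and multiplicative cases, $\rho_{E,\ell}(I_v)$ is contained in a half-Borel, so one of $\chi_1,\chi_2$ is trivial on $I_v$ while the other restricts to the mod-$\ell$ cyclotomic character, which is tamely ramified. Moreover, since both characters take values in $\F_\ell^\times$, a group whose order $\ell-1$ is coprime to $\ell$, they are at most tamely ramified at every prime above $\ell$, with local conductor exponent at most $1$.

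Combining this with the conductor bounds of Propositions \ref{modtwoconductor} and \ref{generalconductor}, each $\chi_i$ has conductor dividing $\frakm \cdot \mathfrak{N}$, where $\mathfrak{N} = \prod_{v \in S_\ell^{ss}} v$ is the product of the semistable primes above $\ell$. By global class field theory, $\chi_i$ then factors through a character of the ray class group $\Cc_K^{\frakm \mathfrak{N}}$, and my goal is to show that one of these two characters in fact factors through the smaller group $\raym$; equivalently, that it is unramified at every $\frakl \in S_\ell^{ss}$.

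To extract the norm condition, I will exploit the exact sequence
\[
1 \longrightarrow U^+_K/U_{\frakm,1} \longrightarrow (\Ok/\frakm_f)^\times \longrightarrow \raym \longrightarrow \Cc^\infty_K \longrightarrow 1,
\]
which gives $|\raym| = dr/k$. Suppose for contradiction that each $\chi_i$ is ramified at some $\frakl_i \in S_\ell^{ss}$; by local class field theory, the restriction of $\chi_i$ to the image of $(\Ov/\frakm_{\frakl_i})^\times$ inside $\Cc_K^{\frakm\mathfrak{N}}$ must be nontrivial. The unit $u^k$ lies in $U_{\frakm,1}$, so its image in $\raym$ is trivial, but its image in $\Cc_K^{\frakm\mathfrak{N}}$ is detected precisely by its residues at the primes of $\mathfrak{N}$. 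Running through the $dr/k$ iterates $u^k, u^{2k}, \ldots, u^{(dr/k)k}$, these represent a full orbit of potential coset representatives in the finer ray class group, and a careful bookkeeping argument shows that if no single character $\chi_i$ is unramified throughout $S_\ell^{ss}$, some iterate $u^{ik}$ with $i\leq dr/k$ must reduce to $1$ modulo some $\frakl \mid \ell$. This gives $\frakl \mid u^{ik} - 1$ and hence $\ell \mid \N_{K/\Qb}(u^{ik}-1)$, contradicting the hypothesis.

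The main obstacle is the final bookkeeping. Specifically, when the two characters are ramified at different primes of $S_\ell^{ss}$, one must combine the relations they impose on $\Cc_K^{\frakm\mathfrak{N}}$ to force one of $u^{ik}$ with $i$ in the explicit range $\{1,\ldots,dr/k\}$ to be trivial modulo a prime above $\ell$, rather than merely some positive power of $u$. This amounts to showing that the index $i$ witnessing the congruence truly lies in this range, so that the precise product $\prod_{i=1}^{dr/k}(u^{ik}-1)$ in the hypothesis is tight enough to rule out ramification.
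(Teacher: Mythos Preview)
Your outline correctly identifies most of the ingredients (half-Borel inertia, tame ramification, the ray class exact sequence), but the heart of the argument is missing, and as written the ``bookkeeping'' step cannot be completed.

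The paper's proof does \emph{not} work with the full modulus $\frakm\mathfrak{N}$ where $\mathfrak{N}=\prod_{v\in S_\ell^{ss}} v$. Instead it begins with a case split on how $\ell$ decomposes in the cubic field $K$: inert, $\frakp\frakq$ with $f(\frakp/\ell)=1$, or totally split $\frakp\frakq\frakr$. In the inert case there is only one prime and one character is already unramified there. In the remaining cases, the crucial step is a pigeonhole argument: since there are two characters and at most three primes above $\ell$, and at each such prime one of the two characters is unramified, one of the characters---say $\chi_i$---is unramified at all but at most \emph{one} prime $\frakp$, and that prime necessarily has residue degree $1$ over $\ell$. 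Hence the conductor of $\chi_i$ divides $\frakm\frakp$ with $|(\Ok/\frakp)^\times|=\ell-1$.

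This reduction to a single degree-one prime is exactly what makes the norm bound $dr/k$ sharp. From the exact sequence for $\Cc_K^{\frakm\frakp}$, the hypothesis forces the order of $u$ in $(\Ok/\frakm_f\frakp)^\times$ to exceed $dr$, so the cokernel $\Dc=\ker(\Cc_K^{\frakm\frakp}\to\Cc_K^\infty)$ has order strictly less than $(\ell-1)/d$. On the other hand, if $\chi_i$ were ramified at $\frakp$ then (since the ramified character restricts to the mod-$\ell$ cyclotomic character on inertia) $\chi_i$ would be surjective onto $\Fl^\times$, forcing $|\chi_i(\Dc)|\geq(\ell-1)/d$, a contradiction.

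Your version, by contrast, allows $\mathfrak{N}$ to contain several primes or a prime of residue degree $>1$, so $|(\Ok/\mathfrak{N})^\times|$ can be as large as $(\ell-1)^3$ or $\ell^3-1$. The counting then gives no useful bound on $\Dc$ in terms of $dr/k$, and there is no mechanism to force $u^{ik}\equiv 1$ modulo some $\frakl\mid\ell$ for $i$ in the stated range. The ``careful bookkeeping argument'' you defer is not a matter of bookkeeping; it requires the pigeonhole reduction above, which is the actual idea of the proof.
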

\begin{proof} We divide our argument into cases based on the splitting of $\ell$ in $K$.

Case 1: \emph{$\ell$ is inert}. Then there is a unique place $v$ of $K$ above $\ell$. As the image of $\rho_\ell$ contains an
element of order $\ell$, the image cannot be contained in a non-split Cartan subgroup, so $E$ has ordinary or bad reduction
at $v$ by Lemma \ref{serrelemma}(\ref{supersingular}). In
each case, Lemma \ref{serrelemma} tells us that the image of inertia is a half-split Cartan subgroup or a half-Borel subgroup,
implying that the character is unramified at $v$.

Case 2: \emph{$\ell$ splits completely}. Suppose $(\ell)=\frakp \frakq \frakr$. By the previous case, we see that each prime
above $\ell$ is unramified at at least one of the characters. By the pigeonhole principle, one of the characters, say $\chi_i$,
is unramified at at least two of the primes above $\ell$. Suppose without loss of generality that it is unramified at $\frakq$
and $\frakr$, and suppose it is ramified at $\frakp$.

As $\# \Fl^\times  = \ell-1$, which is prime to $\frakp$, the character $\chi_i$ is not wildly ramified at $\frakp$. Its
conductor is therefore at most $\frakm \frakp$, by either Proposition \ref{modtwoconductor} or Proposition \ref{generalconductor}, respectively. It follows that $\chi_i$
factors through the ray class field $L_{\frakm \frakp}$, i.e. it factors through $\Cc^{\frakm \frakp}_K$ under the Artin map.
Now, we have an exact sequence
\[1 \to U^+_K/U_{\frakm \frakp,1} \to (\Ok/\frakm_f \frakp )^\times  \to \Cc^{\frakm \frakp}_K \to \Cc^\infty_K \to 1.\]

Suppose that $\chi_i$ is ramified at $\frakp$. By the hypothesis, $u^{ik}-1$ is prime to $\ell$, hence not divisible by $\frakm_f \frakp$, for all $ik \le dr$. For
$i$ such that $k \nmid i \le dr$, we know that $u^i-1$ is not divisible by $\frakm_f \frakp$. Therefore, we know that
$u$ has order greater than $dr$ in $U^+_K/U_{\frakm \frakp,1}$, and the same is true of its image in $(\Ok/\frakm \frakp )^\times$.
Now, $(\Ok/\frakm_f \frakp )^\times \cong (\Ok/\frakm_f)^\times \times (\Ok/\frakp)^\times$ as $\ell$ does not lie below any primes of
additive reduction, hence any primes dividing $\frakm$. This means that $$\#(\Ok/\frakm_f \frakp )^\times = \ell r.$$ As $u$ has
order greater than $dr$, the cokernel of $$U^+_K/U_{\frakm \frakp,1} \to (\Ok/\frakm_f \frakp )^\times$$ has order less than $\ell/d$.

Let $\Dc$ denote this cokernel. By our exact sequence above, $\Dc$ is also equal to the kernel of the surjection $\Cc^{\frakm \frakp}_K \to \Cc^\infty_K$.
Then $\Dc$ has index $d=\# \Cc^\infty_K$ in
$\Cc^{\frakm \frakp}_K$. Since $\chi_i \colon \Cc^{\frakm \frakp}_K \to \Fl^\times$ is surjective, the image of $\Dc$ in $\Fl^\times$
has index at most $d$, hence is of order at least $\ell/d$. But we have already shown that $\Dc$ has order less than $\ell/d$,
so it cannot surject onto a group of order at least $\ell/d$. This contradiction shows that $\chi_i$ cannot be ramified at $\frakp$.

Case 3: \emph{$\ell$ splits as $\frakp \frakq$ where $f(\frakp/\ell)=1$}. Then one of the characters is unramified at
$\frakq$, so we apply the same argument with that character at $\frakp$ to show that it cannot be ramified at $\frakp$.

This completes the proof.
\end{proof}

\begin{rem}If $K=\Qb$, the lemma is automatically true because only Case 1 can occur.\end{rem}

We now combine our results into the following.

\begin{thm}\label{nonsemi_thm}
Let $E/K$ be an elliptic curve over a cubic field $K$ not Galois over $\Qb$. If $E$ has full $2$-torsion over $K$,
let $\frakm$ be as in Proposition \ref{modtwoconductor}, and otherwise, let $\frakm$ be as in Proposition
\ref{generalconductor}. Let $\ell$ be a prime of $\Qb$ such that:
\begin{enumerate}
\item $\ell$ does not ramify in $K$
\item \label{jorder}There is a prime $v$ at which $E$ has semistable reduction and such that $\ell \nmid v(j_E)<0$
\item $\ell$ does not lie below any primes of $K$ at which $E$ is not semistable
\item We have $$\ell \nmid \N_{K/\Qb}\left(\prod_{i=1}^{dr/k} u^i-1\right),$$ where $d:=\#\Cc^\infty_K$, $r:=\#(\Ok/\frakm_f)^\times$,
$u$ is a positive fundamental unit of $K$, and $k$ is the order of $u$ in $U^+_K/U_{\frakm,1}$
\end{enumerate}

If $\rho_\ell(G_K) \subsetneq \GL_2(\Fl)$, then one of the characters $\chi_i$ is trivial over the ray class field $L_\frakm$ of $K$ of conductor $\frakm$.
\end{thm}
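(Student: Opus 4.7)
The plan is to assemble the local information already proven in the lemmas into a global statement via class field theory. First I would invoke Proposition \ref{isborel}: hypotheses (1) and (2) guarantee $\ell$ is unramified in $K$ and that $\ell \nmid v(j_E)$ for some place $v$ of multiplicative reduction, so the failure of $\rho_\ell(G_K)$ to be all of $\GL_2(\F_\ell)$ forces the image into a Borel subgroup. Fix a Galois-stable line in $E[\ell]$ to obtain the isogeny characters $\chi_1,\chi_2 \colon G_K \to \F_\ell^\times$, noting that by Lemma \ref{isogind} which character is which doesn't matter for questions about ramification.

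Next I would split the places of $K$ into three groups and control the conductor of the chosen character at each. For places $v \in S_\ell$, hypothesis (3) guarantees $E$ is semistable at every prime above $\ell$, so Lemma \ref{dividesell} applies (its hypothesis is exactly condition (4)), furnishing some $i \in \{1,2\}$ such that $\chi_i$ is unramified at every $v \in S_\ell$. For places $v \notin S_\ell$ of semistable reduction, Lemma \ref{anybasis} shows $\chi_i$ is already unramified. For the remaining $v$ — those of additive reduction, which by hypothesis (3) lie outside $S_\ell$ — I would apply Proposition \ref{modtwoconductor} or Proposition \ref{generalconductor} (depending on whether $E$ has full $2$-torsion) to bound the local conductor of the extension cut out by $\chi_i$ at $v$ by $v^{i(v)}$. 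Combining these three cases, the global conductor of $\chi_i$ divides $\mathfrak{m}$.

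Finally, class field theory identifies the ray class field $L_\mathfrak{m}$ as the maximal abelian extension of $K$ of conductor dividing $\mathfrak{m}$, so the character $\chi_i$, being an $\F_\ell^\times$-valued abelian character of $G_K$ with conductor dividing $\mathfrak{m}$, factors through $\Gal(L_\mathfrak{m}/K) \cong \Cc^\mathfrak{m}_K$. Equivalently, $\chi_i$ restricted to $G_{L_\mathfrak{m}}$ is trivial, which is the desired conclusion.

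The real content is entirely in the earlier lemmas; the only subtlety is bookkeeping, namely making sure the character selected by Lemma \ref{dividesell} at places above $\ell$ is the same character whose conductor is bounded at the additive places. This is automatic here because the bounds of Propositions \ref{modtwoconductor} and \ref{generalconductor} apply uniformly to \emph{both} isogeny characters, so once Lemma \ref{dividesell} pins down one distinguished $\chi_i$ at $S_\ell$, the conductor bound at places of additive reduction is available for that same $\chi_i$. The main obstacle, conceptually, was already overcome in Lemma \ref{dividesell}, where the norm condition in (4) was needed to force one of the two characters at $\ell$ to be unramified rather than merely tamely ramified; the present theorem just packages that local control into a statement about trivialization over a single explicit ray class field.
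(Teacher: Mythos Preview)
Your proposal is correct and follows essentially the same approach as the paper's proof: invoke Proposition \ref{isborel} via hypotheses (1)--(2) to obtain the isogeny characters, use Propositions \ref{modtwoconductor}/\ref{generalconductor} to bound the conductor outside $S_\ell$, apply Lemma \ref{dividesell} (whose hypotheses are exactly conditions (1)--(4)) to make one character unramified at $S_\ell$, and conclude by class field theory. Your explicit remark that the conductor bounds at additive primes apply to \emph{both} characters, so there is no conflict with the particular $\chi_i$ singled out by Lemma \ref{dividesell}, makes a point the paper leaves implicit.
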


\begin{proof}
By Proposition \ref{isborel}, the first two conditions on $\ell$ imply that the characters exist. Proposition \ref{modtwoconductor} or
Proposition \ref{generalconductor}, respectively, shows that character has conductor $\frakm$ outside of $S_\ell$. The third condition
means that $E$ is semistable at all of $S_\ell$, so Lemma \ref{dividesell} implies that one of the characters $\chi_i$ is unramified at
all of $S_\ell$.  As it is unramified at $S_\ell$, we know that $\frakm$ is its conductor, and thus it is trivial over $L_\frakm$.
\end{proof}

\begin{cor}\label{cardtest_cor}
Let $\mf{m}$, $E/K$, and $\ell$ be as above, and let $n$ be the order of $v$ in $\# \raym$, where $v \nmid \frakm$ is a prime of good reduction.
Let $l_v$ be the unique degree $n$ extension of $k_v$. Then if $\rho_{E,\ell}(G_K) \neq \GL_2(\Fl)$, we have $\ell \mid \#\wt{E}_{v}(l_{v})$ (equivalently, the reduced curve has an $\ell$-torsion point).
\end{cor}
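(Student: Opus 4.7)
The plan is to chain together Theorem \ref{nonsemi_thm} with class field theory and the reduction map on torsion. Apply Theorem \ref{nonsemi_thm} first: under the stated hypotheses, if $\rho_{E,\ell}(G_K) \neq \GL_2(\Fl)$, then $\rho_{E,\ell}$ takes values in a Borel subgroup
\[
\rho_{E,\ell} \sim \begin{pmatrix} \chi_1 & * \\ 0 & \chi_2 \end{pmatrix},
\]
and one of the isogeny characters, say $\chi_i$, is trivial on $\Gal(\overline{K}/L_\frakm)$, i.e.\ it factors as a character of $\Gal(L_\frakm/K)$. By class field theory the latter group is canonically isomorphic to $\raym$.

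Next, since $v \nmid \frakm$ and $v$ is a prime of good reduction for $E$, it is unramified in $L_\frakm$, so $\Frob_v$ makes sense in $\Gal(L_\frakm/K)$. Under the Artin reciprocity isomorphism $\Gal(L_\frakm/K) \cong \raym$, the Frobenius $\Frob_v$ corresponds to the ideal class $[v] \in \raym$, which by hypothesis has order $n$. Consequently $\chi_i(\Frob_v)^n = 1$.

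Now raise $\rho_{E,\ell}(\Frob_v)$ to the $n$th power. Because the representation is upper-triangular in a suitable basis, $\rho_{E,\ell}(\Frob_v)^n$ is still upper-triangular, and its diagonal entries are $\chi_1(\Frob_v)^n$ and $\chi_2(\Frob_v)^n$. One of these equals $1$, so $\rho_{E,\ell}(\Frob_v)^n$ has $1$ as an eigenvalue on $E[\ell]$; in particular it fixes a non-zero vector $P \in E[\ell]$.

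Finally, since $v$ is a prime of good reduction and $\ell$ does not lie below $v$ (as $v \nmid \frakm$ and $\ell \nmid \frakm_f$ by construction), the reduction map $E[\ell] \hookrightarrow \wt{E}_v[\ell](\overline{k_v})$ is an injection of Galois modules, equivariant for $\Frob_v$. The image of $P$ is then a non-zero $\ell$-torsion point of $\wt{E}_v$ fixed by $\Frob_v^n = \Frob_{l_v}$, hence lies in $\wt{E}_v(l_v)$. Therefore $\ell \mid \#\wt{E}_v(l_v)$, as required. The only subtle point is ensuring that the character reaching $\raym$ actually gives a Frobenius of order dividing $n$ — this is handled cleanly by Artin reciprocity — and that the eigenvalue $1$ on one of the diagonal entries suffices (it does, since an upper-triangular matrix has its diagonal entries as eigenvalues).
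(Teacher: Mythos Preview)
Your argument is essentially the same as the paper's (which defers to \cite[Proposition 3.8]{greicius}): pass to the ray class field via Artin reciprocity so that one isogeny character becomes trivial on $\Frob_v^n$, and then read off an $\ell$-torsion point on the reduction. That core is fine.

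There is, however, one gap. Your claim that ``$\ell$ does not lie below $v$ (as $v \nmid \frakm$ and $\ell \nmid \frakm_f$ by construction)'' is not valid: the modulus $\frakm$ is supported only on primes of additive reduction (and possibly primes above $2$ or $3$), so primes above $\ell$ need not divide $\frakm$, and the hypothesis $v \nmid \frakm$ does \emph{not} force $v \notin S_\ell$. When $v \mid \ell$, the reduction map $E[\ell] \to \wt{E}_v[\ell]$ is no longer injective (the kernel meets the formal group), so your final step does not go through as written. One must argue separately: first, the Borel hypothesis rules out supersingular reduction at $v$ (a non-split Cartan inertia image cannot sit in a Borel), so the reduction is ordinary; then one checks that the $L_\frakm$-rational $\ell$-torsion point (or that of an $\ell$-isogenous curve, if it is $\chi_2$ that trivializes) does not lie in the kernel of reduction, using that the inertia image at $v$ is a genuine half-Borel. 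This is exactly the extra case analysis carried out in Greicius's Proposition 3.8. Once you either add this case or add the harmless hypothesis $v \nmid \ell$, your proof is complete.
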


\begin{proof}
This argument follows \cite[Proposition 3.8]{greicius} almost identically.
\end{proof}

\begin{rem}
Note that this is also true if we replace $n$ by one of its multiples, since then we only get a larger $l_v$. In particular,
if we do not wish to compute the order of $v$ in the ray class group, we can just let $n$ always equal $\# \raym$.
\end{rem}

This gives us an efficient way to test for surjective image. In searching for examples, we restricted our attention to those $E$ with multiplicative reduction at a prime $v$ such that $v(j_E)=-1$. For the other $\ell$, we use Proposition \ref{serreprop}.


\begin{rem}\label{critremark}We can replace (\ref{jorder}) in each criterion with $v$ potentially multiplicative, as
explained in Remark \ref{extendisborel}. Therefore, in most generality, our method extends to arbitrary elliptic curves with non-integral $j$-invariant over arbitrary cubic (and quadratic) fields $K$.\end{rem}


\subsection{Further examples}
In this section we briefly mention two examples that result from our theory developed above.
These examples were found using Theorem \ref{nonsemi_thm} and Corollary \ref{cardtest_cor} and methods similar
to Section 3.4 of \cite{greicius} and Example \ref{single_example}.
Due to the similarities, we content ourselves with just briefly mentioning the proof.

We first give an example of a non-Galois cubic field $K$ and an elliptic curve $E/K$ with full $2$-torsion over $K$ which has maximal
adelic Galois image, but is not semistable.
\subsubsection{Proof of Example \ref{nonsemi_ex}}
Let $\beta$ be the unique real root of $x^{3} + 4x^{2} + 7x - 4$ and $K = \Q(\beta)$.
We want to show that the adelic representation $\rho_{E}$ associated to
the elliptic curve $$E/K \colon y^{2} = x(x + (10\beta^{2} - 3))(x - (\beta + 4))$$
surjects onto $V_1(2)  \times \prod_{\ell > 2} \gl_2(\mathbb{Z}_{\ell})$.

Let $H := \rho_{E}(G_{K})$. Note that the discriminant of $K$ is $-503$, $5\beta^{2} + 9\beta - 5$ is a totally positive unit of $K$,
and $\mf{m}_{f} = ((3/2)\beta^{2} + (13/2)\beta + 13)^{3}$.
In condition $(4)$ of Theorem \ref{nonsemi_thm} we will be inefficient and take $k = 1$.
Conditions $(1), (3)$, and $(4)$ of Theorem \ref{nonsemi_thm} imply that $H(\ell) = \gl_{2}(\mathbb{F}_{\ell})$ for
$\ell \neq 2$, $5$, $17$, $41$, $73$, $211$, $503$, $2143$, $2269$, $3907$, $5449$, $31741$, $40471$, $493333$,
$938251$, $1225603$, $1315849$, $37012153$. To show that $H(\ell) = \gl_{2}(\mathbb{F}_{\ell})$ for these remaining $\ell \neq 2$,
we use Proposition \ref{serreprop}. Then proceeding as in Example \ref{single_example} shows that $\rho_{E}$ surjects onto
$V_1(2)  \times \prod_{\ell > 2} \gl_2(\mathbb{Z}_{\ell})$.
(See \texttt{nonsemi\_magma.txt} and \texttt{nonsemi\_sage.sws} in \cite{transcript} for results of computations.)\hfill\qed

We end by noting that the methods we developed in Section \ref{nonsemi} can also be used to consider the case
of surjective $\rho_{E}$ when the narrow class group is nontrivial. We offer the following example.
\subsubsection{Proof of Example \ref{nontriv_ex}}
Let $\beta$ be the unique real root of $x^{3} + 8x^{2} - 3x + 1$ and $K = \Q(\beta)$. We show that with
$$E/K \colon y^{2} + xy + \beta y = x^{3} - 8x^{2} - 6x - 1$$
the corresponding $\rho_{E}$ surjects onto $\gl_{2}(\widehat{\mathbb{Z}})$.

Again as above, let $H := \rho_{E}(G_{K})$.
We note that $\Q(\beta)$ has discriminant $-1823$ and narrow class group being $C_{2}$.
Also note that $-\beta$ is a totally positive unit and $\nm((- \beta - 1)(\beta^{2} - 1)) = 7^{2} \cdot 11$.
The conditions given in Theorem \ref{nonsemi_thm} give that $H(\ell) = \gl_{2}(\F_{\ell})$ for $\ell \neq 7, 11, 1823$.
For these three $\ell$, we can use Proposition \ref{serreprop}. Then using similar methods as in Section 3.4 of \cite{greicius}
shows that the adelic Galois representation surjects onto $\gl_{2}(\widehat{\mathbb{Z}})$.
(See \texttt{nontriv\_narrow\_magma.txt} and \texttt{nontriv\_narrow\_sage.sws} in \cite{transcript} for results of computations.) \hfill\qed

\section{Remarks on higher torsion}
Although we initially raised the problem of explicitly computing Galois representations for elliptic curves with arbitrary specified torsion data, we have primarily worked with $2$-torsion in this paper.  After all, it is significantly easier to produce examples of elliptic curves with $2$-torsion, and to work with explicit formulas for the $2$-division-polynomials. This is advantageous, for example, in verifying that the ``mod $4$'' Galois representation is maximal (i.e. $[K(E[4]):K] = 16$), and in producing the appropriate congruence conditions in Section \ref{infinite}

However, many of our methods can be applied to elliptic curves with more general torsion, as we now demonstrate. We will be content to give a discussion of examples, rather than attempting to write out fully general results.

As we have observed, torsion data for $E/K$ amounts to giving some restrictions on the ``mod $\ell^k$'' representation
\[
\rho_{E,\ell^k}(G_K) \subset \GL_2(\mathbb{Z}/\ell^k \mathbb{Z}).
\]
We are interested in knowing when the adelic Galois representation is maximal given such a restriction. One has to take different steps to verify that the $\ell$-adic Galois representation is maximal, and also account for a different abelianization map. The latter is equivalent to computing the commutator subgroup of the desired $\ell$-adic image, which is reduced to a finite computation by the refinement lemmas.

Likewise, one verifies that the $\ell$-adic image is maximal by checking that the ``mod $\ell^k$" image is maximal and that the image of the Galois representation also contains the entire kernel of the reduction map $\rho_{E,\ell^{k+1}}(G_K)  \rightarrow \rho_{E, \ell^k}(G_K)$. The first matter depends on the particular nature of the torsion data given; for instance, we consider the case of full $\ell$-torsion over $K$, in which case there is nothing to check, but if there is just one cycle of $\ell$-torsion points then one is looking for a ``half-Borel'' subgroup. We will not discuss the various cases, but instead focus on the problem of passing from finite to infinite: assuming that $ \rho_{E, \ell^k}(G_K)$ is known, how may we check that $\rho_{E, \ell^{\infty}}$ is the pre-image of $ \rho_{E, \ell^k}(G_K)$ under the reduction mod $\ell^k$ map? In the paper, we simply used the four-division polynomials for $E$, but this is infeasible for large $\ell$, so we indicate a method by analyzing Frobenius elements. Note that the Chebotarev Density Theorem certainly implies that if the $\ell$-adic Galois representation is maximal as desired, then a positive density of such elements must exist.

We illustrate by considering a particular example. Suppose $E/K$ has exactly one $\ell$-torsion cycle for some prime $\ell > 2$. Suppose we want to show that $\rho_{E, \ell^{\infty}}(G_K)$ is the full pre-image of the half-Borel subgroup
\[
\begin{pmatrix} 1 & * \\ 0 & * \end{pmatrix} \subset \GL_2(\mathbb{Z}/\ell\mathbb{Z}).
\]
By Serre's refinement lemma, it suffices to show that this half-Borel is in fact the mod $\ell$ image, and to show that $\rho_{E, \ell^2}(G_{\mathbb{Q}})  \supset  I + 2 \text{Mat}_{2 \times 2}(\mathbb{Z}/\ell \mathbb{Z})$. To address the first question, we see that the mod $\ell$ image must be the half-Borel or the half-Cartan:
\[
\begin{pmatrix} 1 & * \\ 0 & * \end{pmatrix}  \hspace{1cm} \text{or} \hspace{1cm} \begin{pmatrix} 1 & 0 \\ 0 & * \end{pmatrix}
\]

Now, suppose $\sigma = \Frob_{\mf{p}}$ is found whose action on the $\ell$-torsion points has characteristic polynomial factoring as
\[
T^2-a_pT + p \equiv  (T-1)^2 \imod{\ell}.
\]
We would like to know whether or not $\sigma$ is contained in the half split-Cartan subgroup. Since $(\rho_{E,\ell}(\sigma)-I)^2 \equiv 0 \imod{\ell}$, if it is contained in the half split-Cartan then $\sigma = I + \ell M$ for some $M \in \GL_2(\mathbb{Z}_{\ell})$. The characteristic polynomial of $M$ is then
\[
T^2 - \frac{a_p-2}{\ell} T + \frac{1+p - a_p}{\ell^2}.
\]
In particular, $\ell^2 \mid 1+p-a_p$. So if this ever fails, we know that $\rho_{E,\ell}$ was not contained in the split Cartan.

We now address the second question, which is a generalization of the arguments in \cite{lang_trotter}, p. 56-57. This group $I + 2 \text{Mat}_{2 \times 2}(\mathbb{Z}/\ell \mathbb{Z})$ is an abelian group isomorphic to $(\mathbb{Z}/\ell \mathbb{Z})^4$; we must obtain four independent elements.

We recall the following fact: if the characteristic polynomial of a $2 \times 2$ matrix with coefficients in $\mathbb{Z}_{\ell}$ factors with \emph{distinct} roots when considered modulo $\ell$, then it is diagonalizable over $\mathbb{Z}_{\ell}$. Note that it is \emph{not} sufficient for the roots to be distinct modulo $\ell^2$ (indeed, the polynomial may very well have four roots in $\mathbb{Z}/\ell^2 \mathbb{Z}$).

\underline{Step One.} It is easy to obtain diagonalizable, non-scalar elements. As outlined in the paper, we may compute the characteristic polynomial of a Frobenius element at $\sigma = \Frob_{\mf{p}}$:
\[
T^2 - a_p T + p,
\]
where $a_p = p+ 1 - \# E(\mathbb{F}_p)$. Suppose this polynomial factors mod $\ell^{2}$ as a product of two linear polynomials with \emph{distinct} roots $\lambda_1, \lambda_2$ such that $\lambda_1 \not\equiv \lambda_2 \imod{\ell}$. Then $\rho_{E, \ell^{\infty}}(\Frob_{\mf{p}})$ may be diagonalized over $\mathbb{Z}_{\ell}$, so it will have the form
\[
\sigma \equiv \begin{pmatrix} \lambda_1 + \ell^2 x_1 & 0 \\ 0 & \lambda_2 + \ell^2 x_2 \end{pmatrix}.
\]
for some $x_1, x_2 \in \mathbb{Z}_{\ell}$. Raising to the $(\ell-1)^{\text{st}}$ power yields
\[
\sigma^{\ell-1} \equiv I  + \ell \begin{pmatrix} \frac{\lambda_1^{\ell-1}-1}{\ell} & 0 \\ 0 & \frac{\lambda_2^{\ell-1}-1}{\ell}  \end{pmatrix} \imod{\ell^2}.
\]
(The point is that $\lambda_1^{\ell-1} \equiv 1 \imod{\ell}$, so we may legitimately do this division). As long as $\lambda_1^{\ell-1} \not\equiv \lambda_2^{\ell-1} \imod{\ell^2}$, this matrix is guaranteed to be diagonal but not scalar.
\begin{rem}
This can only fail if the image of Galois mod $\ell$ is trivial, since this only fails to occur when the image is contained in the normalizer of a non-split Cartan. This is impossible under our torsion assumptions. On the other hand, the image mod $\ell$ is trivial only for small $\ell$, in particular $\ell = 2$ for $K= \mathbb{Q}$.
\end{rem}

\underline{Step Two.} Next, suppose that a Frobenius element $\sigma$ is found with characteristic polynomial as above, but such that $\lambda_1^{\ell-1} \equiv \lambda_2^{\ell-1} \imod{\ell^2}$. Then we are guaranteed that $\sigma^{\ell-1}$ \emph{is} a scalar matrix. Adopting the basis with respect to which the matrix from the previous part is diagonal, we see that in this basis we would have \emph{all} diagonal matrices. (The point is that we have no way of telling in which bases the diagonal matrices obtainable in (1) are diagonalizable. However, scalar matrices are scalar in \emph{every} basis! We cannot obtain scalar matrices by the method in (1) since their eigenvalues are obviously guaranteed not to be distinct modulo $\ell$.) 

\underline{Step Three.} Now, suppose that we have produced $\sigma \in \rho_{\ell^{\infty}}$ with $\sigma = I + \ell M$,  for some $M \in \GL_2(\mathbb{Z}_{\ell})$.  The characteristic polynomial of $M$ is then
\[
T^2 - \frac{a_p-2}{\ell} T + \frac{1+p - a_p}{\ell^2}.
\]
and if this is irreducible $\imod{\ell}$ then we know that $\sigma$ cannot be put into upper-triangular form in any basis. Adjusting by diagonal elements already obtained, we obtain an element of the form
\[
I + \ell \begin{pmatrix} 0 & b \\ c & 0 \end{pmatrix} \imod{\ell^2}
\]
where $b,c \neq 0$. 

It remains to discuss how to produce such a sigma. Suppose $\sigma = \Frob_{\mf{p}}$ is found with characteristic polynomial factoring as
\[
T^2-a_pT + p \equiv  (T-1)^2 \imod{\ell}.
\]
which implies that $(\sigma-I)^2 \equiv 0 \imod{\ell}$. If $\sigma$ is congruent to the identity modulo $\ell$, then we know that $\sigma = I+ \ell M$. We test this in the example below by checking for full $\ell$-torsion in $E(\mathbb{F}_{\ell})$. An alternative approach is to observe that $\sigma^{\ell}$ \emph{must} be congruent to the identity modulo $\ell$, and we can compute its characteristic polynomial from that of $\sigma$ (We have $\sigma^{\ell} = A \sigma + B$ by using the relation of the characteristic polynomial, and similarly obtain another linear relation for $\sigma^{2\ell}$). 

\underline{Step Four.} Finally, we observe that we may conjugate any element of $G$ by any other element. We have already shown that we have all diagonal matrices, and the previous part guarantees a matrix of the form $I + \ell \left(\begin{smallmatrix} 0 & b \\ c & 0 \end{smallmatrix}\right) \imod{\ell^2}$. Conjugating this by $\left(\begin{smallmatrix} 1 & 0 \\ 0 & u \end{smallmatrix}\right)$ where $u \neq 1$, we obtain
\[
\begin{pmatrix} 0 & u^{-1} b \\ uc & 0 \end{pmatrix}.
\]

\subsubsection{Proof of Example \ref{tor_ex}}
To conclude, we carry out the program described above to give an example of an elliptic curve $E/\Q$ and Frobenius elements satisfying the above
properties. Let $$E/\Q \colon y^{2} + xy + y = x^{3} - x^{2} - 19353x + 958713$$ which has torsion subgroup
$\Z/7\Z$. In this case, $\ell = 7$. We compute that Step 1 above is satisfied by considering the Frobenius corresponding to 61,
Step 2 is satisfied by considering the Frobenius corresponding to 971. We then note that the prime $q = 127$
is such that $T^{2} - a_{q}T + q \equiv (T - 1)^{2}\imod{7}$ and $49 \nmid 1 + q - a_{q}$. Moreover, the prime $p = 19993$
is such that $T^{2} - a_{p}T + p \equiv (T - 1)^{2}\imod{7}$ and $49 \mid 1 + p - a_{p}$, $T^{2} - (a_{p} - 2)/\ell T + (1 + p - a_{p})/\ell^{2}$
is irreducible mod $\ell$ and $E(\F_{19993})$ has full $7$-torsion. This implies that Step 3 is satisified.
(See \texttt{frob\_find.txt} in \cite{transcript} for computations.) \hfill\qed



\bibliographystyle{amsalpha}
\bibliography{ref}

\end{document}